\documentclass[11pt]{amsart} 
\usepackage{amsmath, amsthm, amscd, amsfonts, amssymb, enumerate, verbatim, newlfont, calc, graphicx, color}
\usepackage[bookmarksnumbered, colorlinks, plainpages]{hyperref}
\usepackage{orcidlink}
\usepackage{tikz}
 \usepackage{doi}
\usepackage{cancel}
\newtheorem{theorem}{Theorem}[section] 
\newtheorem{question}[theorem]{Question}

\newtheorem{lemma}[theorem]{Lemma}
\newtheorem{proposition}[theorem]{Proposition}

\newtheorem{corollary}[theorem]{Corollary}
\newtheorem{definition}[theorem]{Definition}

\newtheorem{remark}[theorem]{Remark}

\newtheorem{example}[theorem]{Example}

\makeatletter
\def\subsection{\@startsection{subsection}{2}{\z@}%
  {-3.5ex\@plus -1ex\@minus -.2ex}%
  {2ex\@plus .2ex}%
  {\normalfont\bfseries}}
\makeatother

\def\Ass{\operatorname{Ass}}
\def\Min{\operatorname{Min}}
\def\depth{\operatorname{depth}}

\usepackage{mathtools}
\usepackage{pstricks}
\usepackage{epsfig}
\usepackage{pst-grad}
\usepackage{pst-plot}
\usepackage{subfig}
\usepackage{academicons}
\usepackage{xcolor}
 \usepackage{academicons}
 \definecolor{orcidlogocol}{HTML}{A6CE39}

\begin{document}

\author[
M. Cimpoea\c{s}, M. Nasernejad and A. A. Qureshi
]{
Mircea Cimpoea\c{s}$^{1,2~\orcidlink{0000-0002-7774-9883}}$,
Mehrdad Nasernejad$^{3,4,*~\orcidlink{0000-0003-1073-1934}}$, and
Ayesha Asloob Qureshi$^{5~\orcidlink{0000-0002-3400-2069}}$
}

\title[On the existence of  the maximal ideal]{On the existence of  the maximal ideal in  the set of associated primes of monomial ideals}
\subjclass[2010]{13B25, 13C15,  13F20, 13E05.} 
\keywords {Associated primes, Depth, The maximal ideal, Fluctuations in associated primes.}

\thanks{$^*$Corresponding author}

\thanks{E-mail addresses:  mircea.cimpoeas@imar.ro, m$\_$nasernejad@yahoo.com and    aqureshi@sabanciuniv.edu}  
\maketitle

\begin{center}
{\it
$^{1}$National University of Science and Technology Politehnica Bucharest,\\
Faculty of Applied Sciences, Bucharest 060042, Romania\\
$^{2}$
Simion Stoilow Institute of Mathematics, Research Unit 5, P.O. Box 1-764, Bucharest 014700, Romania\\
$^{3}$Univ. Artois, UR 2462, Laboratoire de Math\'{e}matique de  Lens (LML), \\  F-62300 Lens, France \\ 
$^{4}$Universit\'e  Caen Normandie, ENSICAEN, CNRS, Normandie Univ, GREYC UMR  6072, F-14000 Caen,  France\\
$^{5}$Sabanc\i\;University, Faculty of Engineering and Natural Sciences, \\
Orta Mahalle, Tuzla 34956, Istanbul, Turkey 
}
\end{center}

\vspace{0.4cm}

\begin{abstract}
In this paper, we investigate the behavior of associated primes of powers of monomial ideals, with particular emphasis on the presence of the maximal ideal and the phenomenon of fluctuation. We first establish several criteria for determining when the maximal ideal belongs to the set of associated primes of powers of monomial ideals in $K[x,y,z]$. We then provide examples illustrating the appearance and disappearance of the maximal ideal among the associated primes of successive powers. Furthermore, we prove that for every $n\geq 3$, there exist infinitely many monomial ideals in $K[x_1,\ldots,x_n]$ whose powers exhibit fluctuations in their sets of associated primes. Finally, we construct infinitely many examples of nearly normally torsion-free (respectively, co-nearly normally torsion-free) monomial ideals that fail to satisfy the persistence (respectively, copersistence) property.
\end{abstract}

\vspace{0.4cm}


\section{Introduction and Overview}

Suppose  that $R$ is  a commutative  Noetherian ring and $I$ is an ideal of $R$. A prime ideal $\mathfrak{p}\subset  R$ is an {\it associated prime} of $I$ if there exists an element $v$ in $R$ such that $\mathfrak{p}=(I:_R v)$, where $(I:_R v)=\{r\in R \mid  rv\in I\}$. The  {\it set of associated primes} of $I$, denoted by  $\mathrm{Ass}_R(R/I)$, is the set of all prime ideals associated to  $I$.  Brodmann \cite{BR} showed that the sequence $\{\mathrm{Ass}_R(R/I^k)\}_{k \geq 1}$ of associated prime ideals is stationary  for large $k$. That is, there exists a positive integer $k_0$ such that $\mathrm{Ass}_R(R/I^k)=\mathrm{Ass}_R(R/I^{k_0})$ for all $k\geq k_0$. The  minimal such $k_0$ is called the {\it index of stability}   of  $I$ and $\mathrm{Ass}_R(R/I^{k_0})$ is called the {\it stable set }  of associated prime ideals of  $I$, which is denoted by $\mathrm{Ass}^{\infty }(I).$ In addition, suppose that  $I=\bigcap_{i=1}^r\mathfrak{q}_i$ is   a \textit{minimal primary decomposition} of $I$ with $\sqrt{\mathfrak{q}_i}=\mathfrak{p}_i$ for all $i$,  that is, $\mathfrak{q}_i$ is a $\mathfrak{p}_i$-primary ideal, 
  $\sqrt{\mathfrak{q}_i}\neq \sqrt{\mathfrak{q}_j}$ for any $i\neq j$, and $I\neq \bigcap_{i\neq j}\mathfrak{q}_i$ for all $j =1, \ldots, r$. Then the  set of  associated primes of  $I$ is equal to $\{\mathfrak{p}_1, \ldots, \mathfrak{p}_r\}$, refer to  \cite[Theorem 4.5]{Atiyah}. 
 
 Suppose  now  that $I$ is a monomial  ideal  in a polynomial ring $R=K[x_1,\ldots,x_n]$ over a field $K$, $\mathfrak{m}=(x_1, \ldots, x_n)$ is  the  maximal ideal of $R$, and $x_1,\ldots,x_n$ are indeterminates.  One of the most interesting  questions in this subject is the existence of the  maximal ideal in the set of associated primes. Generally,  little is known  about  this subject in the literature. However, in recent studies, by utilizing  combinatorial approaches, some  papers have been published  to give answers to this question  for (square-free)  monomial ideals. 
   Chen  et  al., in \cite[Lemma 3.1]{CMS}, proved that if $G$  is a cycle of length 
 $2k+1$ and $I$ is the edge ideal of $G$, then $\mathrm{Ass}(R/I^n) = \mathrm{Min}(R/I)\cup \{\mathfrak{m}\}$ when  $n \geq  k + 1$. 
 Similarly, it has been shown in \cite[Proposition 3.6]{NKA} that if $G$  is an odd cycle and $J$ is the cover ideal of  $G$, then  $\mathfrak{m}\in \mathrm{Ass}(J^s)$  for all $s\geq 2$.  Besides,  in \cite[Lemma 3.5]{NQT}, certain results have been released regarding the existence of maximal ideals in the edge and cover ideals of cones of graphs.   Moreover,   Herzog et  al., in \cite[Corollaries 4.5 and  5.5]{HRV}, studied  the existence of the maximal ideal in the cases of   transversal polymatroidal ideals and    ideals of Veronese type, respectively.  Furthermore, H\`a and Morey,  in \cite[Corollary 3.6]{HM},  presented  a lower bound for the least power $t$ such that $I^t$  has embedded primes. 
 The main motivation of this paper is to investigate the presence of the maximal ideal $(x,y,z)$ among the associated primes of powers of monomial ideals in $K[x,y,z].$  
 
  The paper is organized as follows. In Section \ref{Section2},  we present the basic definitions, results, and preliminary facts that are essential for the developments in the subsequent sections of this paper.
 
 In  Section  \ref{Section3}, we establish criteria for the existence of  the maximal ideal $(x,y,z)$  in the set of associated primes of powers of   monomial ideals in $K[x,y,z]$, as discussed in  Propositions \ref{Pro.Detecting-K[x,y,z]} and \ref{Pro.Not.maximal.1}. 
  On the other hand, it is well known that $\mathfrak{m}\in \mathrm{Ass}(R/I)$  if and only if $\mathrm{depth}(R/I)=0$, see \cite[Exercise  2.2.11]{V1}.  Hence, a bunch  of papers have been written in the language of depth notion, more information can be found in \cite{BHH, HNTT,  HH2, MST, RS}. From this perspective, we complete this section by giving some  criteria for the existence of the maximal ideal in the set of associated primes, 
 refer to Corollaries \ref{p1}, \ref{c3} and \ref{C4}. 
 
 In Section \ref{Section4}, we continue to investigate the presence of the maximal ideal $(x,y,z)$ in the set of associated primes of powers of monomial ideals in $K[x,y,z]$. Through a series of lemmas and five illustrative examples, we analyze the behavior of the maximal ideal for different powers of the ideal, determining precisely when it belongs to or disappears from the sets of associated primes. Each case corresponds to a distinct configuration of the minimal primary decomposition and is supported by explicit constructions and detailed proofs. See Examples \ref{Ex.Ass.PandN}, \ref{Ex.Ass.NandP}, 
 \ref{Ex.Ass.NandP.3}, \ref{Ex.Ass.PandN.2}, and \ref{Ex.Ass.PandN.3}. 

 Finally, Section \ref{Section5}   is devoted to studying the  phenomenon of fluctuation in associated primes of powers of monomial ideals. 
 Let $I\subset R=K[x_1, \ldots, x_n]$ be a monomial ideal. One of the most  intriguing questions in this area is the following:

\medskip
\noindent\textbf{Question.}
Does there exist a monomial prime ideal $\mathfrak{p}$ in $R$ and positive
integers $a < b < c$ such that at least one of the following cases holds?

\begin{itemize}
    \item[(i)] $\mathfrak{p} \in \mathrm{Ass}(R/I^a)$,
    $\mathfrak{p} \notin \mathrm{Ass}(R/I^b)$, and
    $\mathfrak{p} \in \mathrm{Ass}(R/I^c)$.

    \item[(ii)] $\mathfrak{p} \notin \mathrm{Ass}(R/I^a)$,
    $\mathfrak{p} \in \mathrm{Ass}(R/I^b)$, and
    $\mathfrak{p} \notin \mathrm{Ass}(R/I^c)$.
\end{itemize}
This phenomenon is known as \textit{fluctuation in  associated primes of powers}. Several interesting examples of such fluctuations can be
found in \cite{BHH,HH2,RS}.
For example, let $\mathfrak{m}=(x_1, \ldots, x_6)$ and consider the following monomial ideal 
  $$I=(x_1^6, x_1^5x_2, x_1x_2^5, x_2^6, x_1^4x_2^4x_3, x_1^4x_2^4x_4, x_1^4x_5^2x_6^3, x_2^4x_5^3x_6^2),$$ 
 in $R=K[x_1, x_2, x_3, x_4, x_5, x_6]$ from \cite[p.~549]{HH2}. 
  Using  \textit{Macaulay2} \cite{GS}, we have  $\mathfrak{m}\in \mathrm{Ass}_R(R/I)$, $\mathfrak{m}\notin \mathrm{Ass}_R(R/I^2)$,  
 $\mathfrak{m}\in \mathrm{Ass}_R(R/I^3)$, and  $\mathfrak{m}\notin \mathrm{Ass}_R(R/I^4)$.  
 Another interesting example can be found in \cite[Exercise 7.7.25]{V1}. Indeed, consider  the following monomial ideal 
$$I=(x_1x_2^2x_3,\; x_2x_3^2x_4,\; x_3x_4^2x_5,\; x_4x_5^2x_1,\; x_5x_1^2x_2) \subset K[x_1, x_2, x_3, x_4, x_5],$$
 and  $\mathfrak{m} = (x_1, \dots, x_5)$.  Using  \textit{Macaulay2} \cite{GS}, we get  $\mathfrak{m}\in \mathrm{Ass}_R(R/I)$, $\mathfrak{m}\notin \mathrm{Ass}_R(R/I^2)$,   $\mathfrak{m}\notin \mathrm{Ass}_R(R/I^3)$, and  $\mathfrak{m}\in \mathrm{Ass}_R(R/I^4)$.
Furthermore, by virtue of    \cite[Theorem 9]{RT} (respectively, \cite[Theorem 3.18]{BNT}), any square-free monomial ideal in 
 $K[x_1,x_2,x_3,x_4]$ (respectively, $K[x_1,x_2,x_3,x_4,x_5]$) has the strong persistence property, and so has the persistence property (for example see  the proof of \cite[Proposition 2.9]{N1}). This implies that no fluctuations occur for  square-free monomial ideals in $K[x_1,x_2,x_3,x_4,x_5]$. 
 An example of a square-free monomial ideal in $R=K[x_1,\ldots,x_7]$ (respectively, in $R=K[x_1,\ldots,x_{12}]$) exhibiting fluctuation in
the associated primes of its powers can be found in \cite[Example 2.9]{MS} (respectively, in  \cite{Kaiser2014} and \cite[p.~3788]{ANR2}).
 Along these lines, we prove that if $R=K[x_1,\ldots,x_n]$ with $n\geq 3$, then there exist infinitely many monomial ideals in $R$
whose powers exhibit fluctuation in their sets of associated primes; see Theorem~\ref{Th.Fluctuation}.

Next, we verify that if $R=K[x_1,\ldots,x_n]$ with $n\geq 3$, then there exist infinitely many monomial ideals in $R$
 that are nearly normally torsion-free (respectively, co-nearly normally torsion-free) but do not satisfy the
persistence property (respectively, copersistence property), refer to Proposition  \ref{Pro.NNTF} (respectively, Proposition \ref{Pro.co-NNTF}).

   
Throughout this paper,  $\mathcal{G}(I)$ denotes the unique minimal set of monomial generators of a monomial ideal $I\subset R=K[x_1, \ldots, x_n]$, where 
$R=K[x_1, \ldots, x_n]$ is  a polynomial  ring over a field $K$. Furthermore, the {\em support} of a monomial $u\in R$, denoted by $\mathrm{supp}(u)$, is the set of variables that divide $u$; in particular, we consider $\mathrm{supp}(1)=\emptyset$.  Moreover, for a monomial ideal $I$, we set $\mathrm{supp}(I)=\bigcup_{u \in \mathcal{G}(I)}\mathrm{supp}(u)$.


\section{Preliminaries} \label{Section2}

In this section, we gather essential results, definitions, and facts that will be used throughout the rest of this paper. 
We begin by reviewing the following theorem.


\begin{theorem} (\cite[Theorem 5.2] {KHN2})\label{5.2KHN}
Let $I$ be a monomial ideal of $R$ with $\mathcal{G}(I) =\{u_1,\ldots,u_m\}$. Also, assume that there exists a monomial $h=x_{j_1}^{b_1}\cdots x_{j_s}^{b_s}$ such that $h \mid  u_i$ for all $i=1,\ldots,m$. Set $J:=(u_1/h,\ldots,u_m/h)$. Then  we have      $\mathrm{Ass}_R(R/I)=\mathrm{Ass}_R(R/J)\cup\{ (x_{j_1}),\ldots,(x_{j_s})\}.$
\end{theorem}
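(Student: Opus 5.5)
The plan is to reduce the statement to the case where $h$ is a single variable, and then induct on $\deg h=b_1+\cdots+b_s$. The starting observation is that $I=hJ$ as ideals, since $u_i=h\cdot(u_i/h)$ for every $i$. It does not matter whether $\{u_i/h\}$ is a minimal generating set of $J$. Also, if some $u_i=h$, then $J=R$ and $\mathrm{Ass}_R(R/J)=\emptyset$, and the argument below still applies.

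\emph{Base step ($h=x$ a variable).} We show $\mathrm{Ass}_R(R/xJ)=\mathrm{Ass}_R(R/J)\cup\{(x)\}$ for any ideal $J$.
\begin{enumerate}
\item Consider the map $R/J\to R/xJ$ given by $\bar r\mapsto \overline{xr}$. It is injective: $xr\in xJ$ means $xr=xj$ for some $j\in J$, and since $x$ is a nonzerodivisor in the domain $R$, this gives $r=j\in J$.
\item Its image is $xR/xJ$, so the cokernel is $R/(x)$. This gives a short exact sequence
$$0\to R/J\xrightarrow{\;\cdot x\;} R/xJ\to R/(x)\to 0 .$$
\item The standard behaviour of associated primes on short exact sequences yields
$$\mathrm{Ass}_R(R/J)\subseteq \mathrm{Ass}_R(R/xJ)\subseteq \mathrm{Ass}_R(R/J)\cup \mathrm{Ass}_R(R/(x))=\mathrm{Ass}_R(R/J)\cup\{(x)\}.$$
\item It remains to show $(x)\in \mathrm{Ass}_R(R/xJ)$. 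We have $xJ\subseteq (x)$ and $xJ\neq 0$, since $J\neq 0$ because the $u_i$ are nonzero. The only prime strictly contained in the height-one prime $(x)$ is $0$, and $0$ does not contain $xJ$. Hence $(x)$ is a minimal prime of $xJ$, and minimal primes are associated.
\end{enumerate}

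\emph{Inductive step.} Write $h=x_{j_1}h'$ and set $J'=h'J$, so that $I=x_{j_1}J'$.
\begin{enumerate}
\item The base step gives $\mathrm{Ass}_R(R/I)=\mathrm{Ass}_R(R/J')\cup\{(x_{j_1})\}$.
\item The monomial $h'$ divides every generator $u_i/x_{j_1}$ of $J'$, with $(u_i/x_{j_1})/h'=u_i/h$. The induction hypothesis therefore gives $\mathrm{Ass}_R(R/J')=\mathrm{Ass}_R(R/J)\cup\{(x_{j_k}) : x_{j_k}\mid h'\}$.
\item Combining the two, and noting that repeated primes are absorbed in the union, yields the stated formula. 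If $b_1=1$, the variable $x_{j_1}$ may no longer divide $h'$, but $(x_{j_1})$ has already been added in the first step.
\end{enumerate}

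The only point requiring care is the base step, and specifically showing that $(x)$ is genuinely associated rather than only bounded above by the exact sequence. The minimal-prime argument in item 4 handles this. The remaining ingredients are routine: the injectivity of multiplication by $x$ and the $\mathrm{Ass}$ inclusions for short exact sequences.
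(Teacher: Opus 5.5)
The paper gives no proof of this statement. It is quoted from \cite[Theorem 5.2]{KHN2} and used as a black box, for instance in the proof of Lemma~\ref{Lem.Ass}, so there is no in-paper argument to compare yours with.

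Your proof is correct and complete. The exact sequence $0\to R/J\xrightarrow{\cdot x}R/xJ\to R/(x)\to 0$ sandwiches $\mathrm{Ass}_R(R/xJ)$ between $\mathrm{Ass}_R(R/J)$ and $\mathrm{Ass}_R(R/J)\cup\{(x)\}$. The minimal-prime observation gives $(x)\in\mathrm{Ass}_R(R/xJ)$: only $0$ lies strictly below the height-one prime $(x)$, and $xJ\neq 0$. Induction on $\deg h$ does the rest.

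Your route uses neither that $J$ is monomial nor that $\{u_i/h\}$ minimally generates $J$. It actually proves
$\mathrm{Ass}_R(R/hJ)=\mathrm{Ass}_R(R/J)\cup\{(x_{j_1}),\ldots,(x_{j_s})\}$ for every nonzero ideal $J$. The same argument works in any Noetherian domain with $h$ a product of prime elements.

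It is cleanest to run the induction on that general claim, so you need not track generating sets of $J'=h'J$. Your phrasing is also fine, since $\{u_i/x_{j_1}\}$ is in fact $\mathcal{G}(J')$.

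Finally, your argument makes explicit the tacit hypotheses $b_1,\ldots,b_s\geq 1$ and $m\geq 1$ (i.e.\ $I\neq 0$). Without them the formula can fail.
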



\begin{definition} (\cite[Definition 6.1.1]{MRS}) \label{Def. parameter ideal}
\em{
Set  $R = A[X_1,\ldots,X_d]$. A \textit{parameter ideal}  in $R$ is an ideal of the form $(X^{a_1}_1 , \ldots, X^{a_d}_d)R$ 
 with $a_1,  \ldots, a_d \geq 1$.  If  $f = X^{n_1}_1 \cdots  X^{n_d}_d$ with $n_i\geq 0$ for all $i=1, \ldots, d$, then we set
$\mathrm{P}_R( f) := (X^{n_1+1}_1 , \ldots, X^{n_d+1}_d)R.$  
}
\end{definition}


\begin{definition} (\cite[Definition 6.2.1]{MRS}) \label{Def. Corner elements}
\em{
Let $I$ be a monomial ideal in $R=K[x_1, \ldots, x_n]$. A monomial $f\in R$  is called an \textit{$I$-corner element} if $f\notin I$  and 
$x_1f, \ldots, x_nf \in I$.  The set of  $I$-corner elements of $I$  is denoted by $\mathrm{C}_R(I)$.
In particular,  $f$ is an $I$-corner element  of a monomial ideal $I \subset R=K[x_1, \ldots, x_n]$ if and only if $(I:f)=\mathfrak{m}\in \mathrm{Ass}(R/I)$, 
where $\mathfrak{m}=(x_1, \ldots, x_n)$.  
}
\end{definition}


\begin{proposition} (\cite[Proposition 6.2.17]{MRS}) \label{Pro.Parametric}
 Set $R = A[X_1, \ldots, X_d]$ and $\mathfrak{X} = (X_1, \ldots, X_d)R$. Let $J$ be a monomial ideal of $R$ with irredundant $m$-irreducible
  decomposition  $J =\bigcap_{i=1}^k J_i$ with $k \geq 1$.  Assume that the ideals $J_i$ are ordered so that 
  $\mathrm{m}$-$\mathrm{rad}(J_i) =\mathfrak{X}$    if and only if  $1\leq i \leq n$.  Then $\mathrm{C}_R(J)=\bigcup_{i=1}^n \mathrm{C}_R(J_i)$. In other words, if
    $J_i =\mathrm{P}_R(g_i)$ for all  $i=1, \ldots, n$, then the 
  distinct $J$-corner elements are $g_1, \ldots, g_n$. 
\end{proposition}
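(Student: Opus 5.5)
The statement to prove is Proposition \ref{Pro.Parametric}: for a monomial ideal $J$ with irredundant irreducible decomposition $J=\bigcap_{i=1}^k J_i$, the $J$-corner elements are exactly the $g_i$ with $J_i=\mathrm{P}_R(g_i)$ for the components $J_i$ that are primary to $\mathfrak{X}$. The plan is to prove the two inclusions $\mathrm{C}_R(J)\supseteq\{g_1,\ldots,g_n\}$ and $\mathrm{C}_R(J)\subseteq\{g_1,\ldots,g_n\}$ directly from the definition of corner elements. Throughout we use that monomial irreducible ideals are generated by pure powers of variables. For $i\le n$ this means $J_i=(X_1^{a_1},\ldots,X_d^{a_d})=\mathrm{P}_R(g_i)$ with $g_i=\prod X_j^{a_j-1}$. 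For $i>n$, $J_i$ omits some variable $X_{j}$.

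\textbf{Step 1: each $g_i$ with $i\le n$ is a corner element.} First, $X_jg_i\in J_i$ for every $j$, so $X_jg_i\in J_i$ trivially. Second, for $l\ne i$ we show $g_i\in J_l$. Irredundancy gives $\bigcap_{l\ne i}J_l\not\subseteq J_i$, so there is a monomial $w\in\bigcap_{l\ne i}J_l$ with $w\notin J_i$. The condition $w\notin J_i$ means $w$ divides $g_i$ coordinatewise in the exponents of $X_1,\ldots,X_d$. Consequently the monomial $g_i$ lies in every $J_l$, $l\neq i$, because monomial ideals are closed under taking multiples. Hence $X_jg_i\in J$ for all $j$, while $g_i\notin J_i\supseteq J$. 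So $g_i\in\mathrm{C}_R(J)$. Distinctness of the $g_i$ follows from distinctness of the $J_i$.

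\textbf{Step 2: every corner element $f$ is some $g_i$.} Since $f\notin J$, the set $S=\{i : f\notin J_i\}$ is nonempty. Take $i\in S$. For every $j$ we have $X_jf\in J\subseteq J_i$, while $f\notin J_i$. Because $J_i$ is generated by pure powers, $X_jf\in J_i$ forces the following: $J_i$ contains a power of $X_j$, and the exponent of $X_j$ in $f$ equals that power minus one. The first conclusion means $J_i$ is $\mathfrak{X}$-primary, so $i\le n$. The second, applied for all $j$, gives $f=g_i$.

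The main obstacle is keeping the coefficient ring $A$ in view. Monomials here are monomials in the $X_j$, possibly with $A$-coefficients, and the $\mathrm{m}$-irreducible components are generated by pure powers over $A$. The facts used above (membership determined by exponents, and the corner condition read coordinatewise) must be checked in that setting. If $A$ is a field this is the standard monomial-ideal bookkeeping and both steps go through verbatim.
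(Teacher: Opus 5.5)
Your proof is correct. The paper gives no proof of its own here: it quotes the result from \cite[Proposition 6.2.17]{MRS} and uses it only in Section~\ref{Section3}, to identify the corner element $h$ and the component $\mathrm{P}_R(h)$. So there is no in-paper argument to compare against. Your two-inclusion argument is a complete, self-contained proof. Irredundancy gives a monomial $w\in\bigcap_{l\neq i}J_l\setminus J_i$, hence $w\mid g_i$ and $g_i\in J_l$ for all $l\neq i$. Conversely, for $f\notin J_i$, the conditions $X_jf\in J_i$ for all $j$ force a pure power of every variable into $J_i$, with exponent one more than in $f$.

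Three small points would tighten it.

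\emph{Terminology in Step 2.} Write ``$J_i$ contains a pure power of every variable, so $\mathrm{m}$-$\mathrm{rad}(J_i)=\mathfrak{X}$'' rather than ``$J_i$ is $\mathfrak{X}$-primary''. Over a coefficient ring with zero-divisors a parameter ideal need not be primary. For example, over $A=\mathbb{Z}/6\mathbb{Z}$ the element $2$ is a zero-divisor modulo $(X_1^{a_1},\ldots,X_d^{a_d})$ but is not nilpotent. The ordering hypothesis is phrased via the monomial radical anyway.

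\emph{The first formulation.} The statement $\mathrm{C}_R(J)=\bigcup_{i=1}^n\mathrm{C}_R(J_i)$ requires $\mathrm{C}_R(\mathrm{P}_R(g_i))=\{g_i\}$. This is just your Steps 1 and 2 applied to the one-component decomposition of $J_i$, but say so explicitly.

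\emph{The coefficient ring.} Your concern about $A$ dissolves. In \cite{MRS} monomials are $X_1^{n_1}\cdots X_d^{n_d}$ with coefficient $1$. For $A\neq 0$, a monomial lies in a monomial ideal if and only if it is a multiple of one of its monomial generators. Intersections of monomial ideals are again monomial, which is what lets you choose $w$ to be a monomial. These are exactly the facts you use, so the argument holds verbatim for every nonzero $A$, not just for fields.
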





\begin{proposition}(\cite[Proposition 2.2]{NR}) \label{K[x,y].1}
Let $I$ be a monomial ideal in the polynomial ring $R=K[x,y]$ with
 $\mathcal{G}(I)=\{u_1,\ldots,u_m\}.$ If $m\geq 2$, then
$\mathfrak m=(x,y)\in\operatorname{Ass}_R(R/I).$
\end{proposition}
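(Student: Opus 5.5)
The statement to prove is Proposition \ref{K[x,y].1}: if a monomial ideal $I\subset K[x,y]$ has at least two minimal generators, then $\mathfrak m=(x,y)\in\operatorname{Ass}_R(R/I)$. The strategy is to exhibit an explicit $I$-corner element in the sense of Definition \ref{Def. Corner elements}. This means a monomial $f\notin I$ with $xf\in I$ and $yf\in I$, so that $(I:f)=\mathfrak m$.

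First, order the minimal generators. Write them as $u_i=x^{a_i}y^{b_i}$, $i=1,\ldots,m$, with $m\geq 2$. Since no minimal generator divides another, the exponents can be sorted so that
$$a_1>a_2>\cdots>a_m \quad\text{and}\quad b_1<b_2<\cdots<b_m.$$
This holds because two generators with equal $x$-exponent would be comparable under divisibility, and then one of them would not be minimal.

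Next, take two consecutive generators $u_1=x^{a_1}y^{b_1}$ and $u_2=x^{a_2}y^{b_2}$, which exist since $m\ge 2$. Set
$$f=x^{a_1-1}y^{b_2-1}.$$
The exponents are nonnegative because $a_1>a_2\ge 0$ and $b_2>b_1\ge 0$. We then check the two membership conditions:
\begin{itemize}
\item[(a)] $xf=x^{a_1}y^{b_2-1}$ is divisible by $u_1$, since $b_2-1\ge b_1$;
\item[(b)] $yf=x^{a_1-1}y^{b_2}$ is divisible by $u_2$, since $a_1-1\ge a_2$.
\end{itemize}

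The main point is to verify that $f\notin I$, that is, that no $u_i$ divides $f$.
\begin{itemize}
\item For $i=1$, the $x$-exponent of $u_1$ is $a_1>a_1-1$, so $u_1\nmid f$.
\item For $i\ge 2$, we have $b_i\ge b_2>b_2-1$, so the $y$-exponent of $u_i$ is too large and $u_i\nmid f$.
\end{itemize}
Hence $f$ is an $I$-corner element.

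Finally, $(I:f)\supseteq\mathfrak m$ by (a) and (b), and $(I:f)\ne R$ because $f\notin I$. Since $\mathfrak m$ is maximal, $(I:f)=\mathfrak m$, which shows $\mathfrak m\in\operatorname{Ass}_R(R/I)$.

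Equivalently, one can reach the same conclusion through Proposition \ref{Pro.Parametric}. The staircase has an irreducible component $(x^{a_1},y^{b_2})=\mathrm{P}_R(f)$ whose radical is $\mathfrak m$, and the corresponding corner element is exactly $f$. The direct check above is shorter, so I would present that one.
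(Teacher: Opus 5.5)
Your proof is correct. Sorting the staircase, taking $f=x^{a_1-1}y^{b_2-1}$, and checking $xf\in I$, $yf\in I$, $f\notin I$ is complete; the strict increase of the $b_i$, which you use as $b_i\ge b_2>b_1$, follows from minimality just as quickly, since $a_i>a_j$ and $b_i\ge b_j$ would force $u_j\mid u_i$. The paper gives no proof of its own and only cites \cite[Proposition 2.2]{NR}, but your $f$ is exactly the corner element $h$ furnished by Theorem \ref{Th.Detecting-General} for $n=2$ with $i_1=1$, $i_2=2$, so this is the same corner-element approach that underlies the paper's criteria.
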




\section{Detecting  the presence of the maximal  ideal $(x,y,z)$ in the set of associated primes} \label{Section3}
In  this section,  we establish criteria for the existence of  the maximal ideal $(x,y,z)$  in the set of associated primes of powers of 
 monomial ideals in $K[x,y,z]$.  To achieve this, we first recall the following theorem, which is needed to formulate Lemma \ref{Lem.Detecting-K[x,y,z]}. We state it here for convenience.

\begin{theorem} \label{Th.Detecting-General} (\cite[Theorem 2.3]{NR})
Suppose that  $I$ is a monomial ideal in a polynomial ring $R=K[x_1, \ldots, x_n]$, $\mathfrak{m}=(x_1, \ldots, x_n)$, and 
  $$\mathcal{G}(I)=\{x^{r_{1,1}}_1\cdots x^{r_{1,n}}_n, \ldots, x^{r_{k,1}}_1\cdots x^{r_{k,n}}_n\},$$ with $k\geq n$.  Then  $\mathfrak{m}\in \mathrm{Ass}_R(R/I)$ if and only if there exist  distinct  integers $i_1, \ldots, i_n \in \{1, \ldots, k\}$ such that  the following conditions hold:
\begin{itemize}
\item[(i)] $r_{i_t,j}<r_{i_j,j}$ for each  $t\neq j$, where $j=1, \ldots, n$;
\item[(ii)]  for each  $i\in\{1, \ldots, k\}\setminus \{i_1, \ldots, i_n\}$, there exists a positive integer $1\leq t \leq n$ such that $r_{i,t}\geq r_{i_t,t}$.
\end{itemize}
\end{theorem}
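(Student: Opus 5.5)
The plan is to reduce the statement to the existence of an $I$-corner element. By Definition \ref{Def. parameter ideal}'s companion Definition \ref{Def. Corner elements}, $\mathfrak{m}\in \mathrm{Ass}_R(R/I)$ if and only if there is a monomial $f=x_1^{a_1}\cdots x_n^{a_n}$ with $f\notin I$ and $x_jf\in I$ for all $j$. I will show that such an $f$ exists exactly when indices $i_1,\ldots,i_n$ satisfying (i) and (ii) exist. The translation runs through the exponent vectors: $f$ is given by $(a_1,\ldots,a_n)$, and the generator $u_{i_j}$ will be the one witnessing $x_jf\in I$.

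($\Rightarrow$) Let $f$ be an $I$-corner element. For each $j$, since $x_jf\in I$, some minimal generator $u_{i_j}$ divides $x_jf$. Since $f\notin I$, this generator does not divide $f$. Hence its $j$-th exponent is exactly $r_{i_j,j}=a_j+1$, and $r_{i_j,t}\le a_t$ for $t\neq j$.

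Condition (i) follows at once: for $t\ne j$ we have $r_{i_t,j}\le a_j<a_j+1=r_{i_j,j}$. The same inequality shows that the indices $i_1,\ldots,i_n$ are distinct. Indeed, the $j$-th exponent of $u_{i_j}$ is $a_j+1$, while that of $u_{i_t}$ for $t\neq j$ is at most $a_j$.

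For (ii), take any other generator $u_i$. It does not divide $f$, so $r_{i,t}\ge a_t+1=r_{i_t,t}$ for some $t$.

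($\Leftarrow$) Given indices satisfying (i) and (ii), set $a_j:=r_{i_j,j}-1$ and $f:=\prod_j x_j^{a_j}$. We need $a_j\ge 0$. When $n\ge 2$, condition (i) gives $r_{i_j,j}>r_{i_t,j}\ge 0$. When $n=1$, we only need $I\neq R$, which is implicit since $\mathcal{G}(I)$ consists of nonconstant monomials when $I$ is proper.

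\emph{$x_jf\in I$.} The generator $u_{i_j}$ divides $x_jf$: its $j$-th exponent equals $a_j+1$. For $t\neq j$, condition (i) applied in coordinate $t$ gives $r_{i_j,t}<r_{i_t,t}$, i.e. $r_{i_j,t}\le a_t$.

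\emph{$f\notin I$.} No generator divides $f$. For $u_{i_t}$, its $t$-th exponent $a_t+1$ exceeds $a_t$. For the remaining generators, (ii) gives a coordinate $t$ with $r_{i,t}\ge a_t+1$.

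Hence $f$ is an $I$-corner element, and $(I:f)=\mathfrak{m}$.

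The argument is essentially bookkeeping. The only delicate points are the distinctness of the $i_j$ and the nonnegativity of the $a_j$, both handled above. The hypothesis $k\ge n$ is automatically forced by distinctness, so it plays no further role.
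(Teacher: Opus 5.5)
Your proof is correct and takes essentially the same approach as the source. The paper only cites this result, but its remark right after the statement says the proof yields the corner element $h=x_1^{r_{i_1,1}-1}\cdots x_n^{r_{i_n,n}-1}$, which is exactly your $f$, and your converse reads the indices off a monomial witness of $(I:f)=\mathfrak{m}$, which rests on the standard fact (used freely in the paper) that associated primes of monomial ideals are realized by monomial witnesses.
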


Particularly, after recalling the definitions of  $\mathrm{P}_R(h)$ and $\mathrm{C}_R(I)$  from  Definitions \ref{Def. parameter ideal} and \ref{Def. Corner elements},   we can deduce from the proof of  Theorem \ref{Th.Detecting-General} and  Proposition \ref{Pro.Parametric} that $h={x_1}^{r_{{i_1},1}-1} \cdots {x_n}^{r_{{i_n},n}-1}\in \mathrm{C}_R(I)$ and  additionally  $\mathrm{P}_R(h)= ({x_1}^{r_{{i_1},1}}, \ldots,  {x_n}^{r_{{i_n},n}})$ appears in the irredundant irreducible decomposition of $I$.


The next lemma is an immediate consequence of Theorem \ref{Th.Detecting-General}, which will be used in the proof of 
Proposition \ref{Pro.Detecting-K[x,y,z]}.

\begin{lemma}\label{Lem.Detecting-K[x,y,z]} 
Let $I \subset R=K[x,y,z]$ be a monomial ideal,  $\mathfrak{m}=(x,y,z)$, $k\geq 3$, and  
$\mathcal{G}(I)=\{x^{r_{1,1}} y^{r_{1,2}} z^{r_{1,3}},  \ldots, x^{r_{k,1}} y^{r_{k,2}} z^{r_{k,3}}\}.$
 Then $\mathfrak{m}\in \mathrm{Ass}_R(R/I)$ if and only if there exist distinct integers $i_1, i_2, i_3 \in \{1, \ldots, k\}$ such that the following conditions hold:
\begin{itemize}
\item[(i)] $r_{i_t,j}<r_{i_j,j}$ for each  $t\neq j$, where $j=1, 2,3$;
\item[(ii)]  for each  $i\in\{1, \ldots, k\}\setminus \{i_1, i_2, i_3\}$, there exists a positive integer $1\leq t \leq 3$ such that $r_{i,t}\geq r_{i_t,t}$.
\end{itemize}
In particular, we have  $h=x^{r_{{i_1},1}-1} y^{r_{{i_2},2}-1} z^{r_{{i_3},3}-1} \in \mathrm{C}_R(I)$ and 
$\mathrm{P}_R(h)= (x^{r_{{i_1},1}}, y^{r_{{i_2},2}}, z^{r_{{i_3},3}})$ appears in the irredundant irreducible decomposition of $I$. 
\end{lemma}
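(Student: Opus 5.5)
The lemma is the case $n=3$ of Theorem \ref{Th.Detecting-General}, so the main part follows by specialization. Set $x_1=x$, $x_2=y$, $x_3=z$. The hypothesis $k\geq 3=n$ is exactly what that theorem requires. Conditions (i) and (ii) are conditions (i) and (ii) of Theorem \ref{Th.Detecting-General} read with $n=3$. This gives the equivalence $\mathfrak{m}\in\mathrm{Ass}_R(R/I)$ if and only if such $i_1,i_2,i_3$ exist.

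For the ``in particular'' part, I would argue directly rather than only cite the remark after Theorem \ref{Th.Detecting-General}. Put $a_j=r_{i_j,j}$ and $h=x^{a_1-1}y^{a_2-1}z^{a_3-1}$. Condition (i) gives $a_j>r_{i_t,j}\geq 0$ for $t\neq j$, so every $a_j\geq 1$ and $h$ is a genuine monomial.

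First, $h\notin I$. Suppose some generator $u_i$ divides $h$; then $r_{i,t}\leq a_t-1<r_{i_t,t}$ for all $t$.
\begin{itemize}
\item If $i\notin\{i_1,i_2,i_3\}$, this contradicts (ii).
\item If $i=i_j$, then at $t=j$ it reads $r_{i_j,j}\leq a_j-1<a_j=r_{i_j,j}$, which is absurd.
\end{itemize}

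Second, $x_jh\in I$ for each $j$. Check that $u_{i_j}$ divides $x_jh$. In the $j$-th coordinate, $x_jh$ has exponent $a_j=r_{i_j,j}$. In a coordinate $t\neq j$, it has exponent $a_t-1=r_{i_t,t}-1\geq r_{i_j,t}$, by (i) applied with roles $(t,j)$ swapped, namely $r_{i_j,t}<r_{i_t,t}$.

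Hence $h\in \mathrm{C}_R(I)$ by Definition \ref{Def. Corner elements}. Proposition \ref{Pro.Parametric}, applied with $A=K$, $d=3$, $J=I$, then says the corner elements are exactly the $g_i$ with $\mathrm{P}_R(g_i)$ an $\mathfrak{m}$-primary component of the irredundant irreducible decomposition. So $\mathrm{P}_R(h)=(x^{a_1},y^{a_2},z^{a_3})$ appears in that decomposition.

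The only mild obstacle is index bookkeeping in condition (i). The convention is that $i_j$ is the generator achieving the strictly largest exponent in variable $j$ among the three chosen generators, and this must be used consistently in the divisibility check for $x_jh$. Everything else is direct specialization.
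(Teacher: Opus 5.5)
Your proposal is correct and follows the paper's route. The equivalence is the $n=3$ specialization of Theorem \ref{Th.Detecting-General}, and the ``in particular'' part comes from $h$ being an $I$-corner element combined with Proposition \ref{Pro.Parametric}; the only difference is that you check $h\notin I$ and $xh,yh,zh\in I$ directly from (i) and (ii), whereas the paper just appeals to the proof of Theorem \ref{Th.Detecting-General}, so your version is self-contained.
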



 The next proposition  provides our first criterion.

\begin{proposition}\label{Pro.Detecting-K[x,y,z]} 
Let $I \subset R=K[x,y,z]$ be a monomial ideal,  $\mathfrak{m}=(x,y,z)$, $k\geq 3$, and  
 $\mathcal{G}(I)=\{x^{r_{1,1}} y^{r_{1,2}} z^{r_{1,3}},  \ldots, x^{r_{k,1}} y^{r_{k,2}} z^{r_{k,3}}\}.$
 Let there exist  some $1\leq s \leq k-2$ such that
 \begin{itemize}
 \item   $r_{1,1} \geq r_{2,1} \geq \cdots \geq r_{{i_s},1} > r_{{i_s+1}, 1}$ and    $r_{{i_s},1} > r_{{i_s+2}, 1},$ 
 \item   $r_{i_s,2} < r_{{i_s+1},2} > r_{{i_s+2},2},$   $r_{{i_s+1},2} > r_{i_{s},2}$  and   $r_{{i_s+1},2} > r_{{i_s+2},2}$, 
 \item    $r_{{i_s+1},3} < r_{{i_s+2},3} \leq \cdots \leq r_{k,3}$ and  $r_{i_s,3} < r_{{i_s+2},3}$. 
 \end{itemize}
 Then $\mathfrak{m} \in  \mathrm{Ass}(R/I)$.  In particular,  $h=x^{r_{i_s,1}-1} y^{r_{{i_s+1},2}-1} z^{r_{{i_s+2},3}-1}\in \mathrm{C}_R(I)$ and  $\mathrm{P}_R(h)=(x^{r_{i_s,1}}, y^{r_{{i_s+1},2}}, z^{r_{{i_s+2},3}})$  
 appears in the irredundant irreducible decomposition of $I$. 
\end{proposition}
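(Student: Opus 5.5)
The plan is to apply Lemma \ref{Lem.Detecting-K[x,y,z]} with the three distinguished generators indexed by $i_1=i_s$, $i_2=i_s+1$, $i_3=i_s+2$. The notation $i_s$ in the hypotheses should be read as the index $s$ itself, so the generators are ordered $u_1,\ldots,u_k$ and the three consecutive ones are $u_s,u_{s+1},u_{s+2}$. With this choice, the candidate corner element is exactly $h=x^{r_{s,1}-1}y^{r_{s+1,2}-1}z^{r_{s+2,3}-1}$. Once conditions (i) and (ii) of Lemma \ref{Lem.Detecting-K[x,y,z]} hold, the "in particular" part of that lemma gives both $h\in \mathrm{C}_R(I)$ and the appearance of $\mathrm{P}_R(h)$ in the irredundant irreducible decomposition.

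\emph{Step 1: condition (i).} This requires $r_{i_t,j}<r_{i_j,j}$ for $t\neq j$.
\begin{itemize}
\item For $j=1$ we need $r_{s+1,1}<r_{s,1}$ and $r_{s+2,1}<r_{s,1}$; these are the two strict inequalities in the first bullet.
\item For $j=2$ we need $r_{s,2}<r_{s+1,2}$ and $r_{s+2,2}<r_{s+1,2}$; this is the second bullet.
\item For $j=3$ we need $r_{s,3}<r_{s+2,3}$ and $r_{s+1,3}<r_{s+2,3}$; this is the third bullet.
\end{itemize}
So (i) is a direct reading of the hypotheses.

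\emph{Step 2: condition (ii).} Take any index $i\notin\{s,s+1,s+2\}$. We must find $t$ with $r_{i,t}\ge r_{i_t,t}$.
\begin{itemize}
\item If $i<s$, the monotone chain $r_{1,1}\ge\cdots\ge r_{s,1}$ gives $r_{i,1}\ge r_{s,1}$, so $t=1$ works.
\item If $i>s+2$, the chain $r_{s+2,3}\le\cdots\le r_{k,3}$ gives $r_{i,3}\ge r_{s+2,3}$, so $t=3$ works.
\end{itemize}
Since $1\le s\le k-2$, every remaining index falls into one of these two ranges, and (ii) holds. Lemma \ref{Lem.Detecting-K[x,y,z]} then yields $\mathfrak m\in\mathrm{Ass}(R/I)$ together with the statements about $h$ and $\mathrm{P}_R(h)$.

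The argument is essentially bookkeeping. The only real obstacle I expect is interpretive: making sure the indexing ($i_s$ versus $s$) and the direction of the monotone chains match exactly what the lemma requires. As a sanity check I would verify directly that $h\notin I$ and $xh,yh,zh\in I$, using $u_s,u_{s+1},u_{s+2}$ respectively.
\begin{itemize}
\item \emph{$xh\in I$ via $u_s$.} We have $r_{s,1}\le r_{s,1}$, $r_{s,2}<r_{s+1,2}$ and $r_{s,3}<r_{s+2,3}$, so $u_s\mid xh$.
\item \emph{$yh,zh\in I$.} The same comparisons, applied to $u_{s+1}$ and $u_{s+2}$, show $u_{s+1}\mid yh$ and $u_{s+2}\mid zh$.
\item \emph{$h\notin I$.} No generator divides $h$: generators with $i\le s$ have $r_{i,1}\ge r_{s,1}$, $u_{s+1}$ has $y$-exponent too large, and generators with $i\ge s+2$ have $r_{i,3}\ge r_{s+2,3}$.
\end{itemize}
This confirms the conclusion independently of Lemma \ref{Lem.Detecting-K[x,y,z]}, via Definition \ref{Def. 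Corner elements}.
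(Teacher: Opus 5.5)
Your proposal is correct and takes the same approach as the paper: condition (i) of Lemma~\ref{Lem.Detecting-K[x,y,z]} is read directly off the strict inequalities, and condition (ii) is checked by splitting the remaining indices into those below $i_s$ (using the decreasing $x$-exponents) and those above $i_s+2$ (using the increasing $z$-exponents). Your extra direct verification that $h\notin I$ and $xh,yh,zh\in I$ is also correct, and it gives a self-contained confirmation of the corner-element claim that the paper does not spell out.
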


\begin{proof}
Since $r_{i_s,1} > r_{{i_s+1},1}$,  $r_{i_s,1} > r_{{i_s+2},1}$, $r_{{i_s+1},2} > r_{i_{s},2}$,  $r_{{i_s+1},2} > r_{{i_s+2},2}$,  
$r_{{i_s+2},3} > r_{{i_s+1},3}$, and  $r_{{i_s+2},3} > r_{i_s,3}$,  we can deduce that condition (i) in Lemma \ref{Lem.Detecting-K[x,y,z]} holds. 
 Let $1\leq \ell \leq i_s-1$ and $i_s+3 \leq p \leq k$. Thanks to  $r_{\ell,1} \geq r_{i_s,1}$  and $r_{p,3} \geq r_{{i_s+2},3}$, this shows that condition (ii) in Lemma \ref{Lem.Detecting-K[x,y,z]} holds as well. Consequently,   we get  $\mathfrak{m}\in \mathrm{Ass}(R/I)$.  Furthermore, 
  based on   Lemma \ref{Lem.Detecting-K[x,y,z]},  we get $h=x^{r_{i_s,1}-1} y^{r_{{i_s+1},2}-1} z^{r_{{i_s+2},3}-1}$ is an $I$-corner element, that is, $x^{r_{i_s,1}-1} y^{r_{{i_s+1},2}-1} z^{r_{{i_s+2},3}-1}\in \mathrm{C}_R(I)$; in particular,  $\mathrm{P}_R(h)=(x^{r_{i_s,1}}, y^{r_{{i_s+1},2}}, z^{r_{{i_s+2},3}})$
   appears in the irredundant irreducible decomposition of $I$.  This completes  the proof.
 \end{proof}


 We now present the following proposition, which yields our second criterion.

\begin{proposition} \label{Pro.Not.maximal.1}
   Suppose that   $I \subset R=K[x,y,z]$ is  a monomial ideal, $\mathfrak{m}=(x,y,z)$, and 
$\mathcal{G}(I)=\{x^{r_{1,1}} y^{r_{1,2}} z^{r_{1,3}},  \ldots, x^{r_{k,1}} y^{r_{k,2}} z^{r_{k,3}}\}$ such that   
  $$r_{1,i} \geq r_{2,i} \geq \cdots \geq r_{k, i}\; \; \text{and} \; \; r_{1,j} \geq r_{2,j} \geq \cdots \geq r_{k, j},$$ 
    for some $1\leq i\neq j \leq 3$. 
     Then    $\mathfrak{m}\notin \mathrm{Ass}(R/I)$, that is, $\mathrm{C}_R(I)=\emptyset$. 
\end{proposition}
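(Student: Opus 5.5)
The plan is to argue by contradiction using the characterization of Lemma \ref{Lem.Detecting-K[x,y,z]}, after first disposing of the case of few generators. The hypothesis says that, for two distinct variables, say the ones indexed by $i$ and $j$, the exponents are weakly decreasing along the same ordering $1,2,\ldots,k$ of the generators. By the symmetry of the statement in $x,y,z$, I may assume $\{i,j\}=\{1,2\}$, i.e. the $x$- and $y$-exponents both weakly decrease.

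\emph{Case $k\le 2$.} Lemma \ref{Lem.Detecting-K[x,y,z]} requires $k\ge 3$, so this case is treated first.
\begin{itemize}
\item If $k=1$, then $I=(u)$ is principal. By Theorem \ref{5.2KHN} with $h=u$, its associated primes are the height-one primes $(x_\ell)$ with $x_\ell\mid u$, so $\mathfrak m\notin\mathrm{Ass}(R/I)$.
\item If $k=2$, write $I=h\,(u_1',u_2')$ with $h=\gcd(u_1,u_2)$ and $\gcd(u_1',u_2')=1$. By Theorem \ref{5.2KHN}, $\mathrm{Ass}(R/I)=\mathrm{Ass}(R/(u_1',u_2'))\cup\{(x_\ell): x_\ell\mid h\}$.
\item In the $k=2$ case, $u_1',u_2'$ are coprime monomials, so they form a regular sequence. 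Hence $(u_1',u_2')$ is unmixed of height $2$, and in particular $\mathfrak m$ (of height $3$) is not associated.
\end{itemize}
(In fact the $k=2$ case holds without the monotonicity hypothesis.)

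\emph{Case $k\ge 3$.} Suppose $\mathfrak m\in\mathrm{Ass}(R/I)$. Lemma \ref{Lem.Detecting-K[x,y,z]} then gives distinct indices $i_1,i_2,i_3$ satisfying condition (i). I will only use two instances of (i).
\begin{itemize}
\item Taking $j=1$, $t=2$ gives $r_{i_2,1}<r_{i_1,1}$.
\item Taking $j=2$, $t=1$ gives $r_{i_1,2}<r_{i_2,2}$.
\end{itemize}
Now use monotonicity of the $x$-exponents, $r_{1,1}\ge\cdots\ge r_{k,1}$. If $i_2<i_1$, then $r_{i_2,1}\ge r_{i_1,1}$, contradicting the first inequality; since $i_1\neq i_2$, this forces $i_1<i_2$. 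By the same reasoning applied to the $y$-exponents, the second inequality forces $i_2<i_1$. These two conclusions contradict each other, so $\mathfrak m\notin\mathrm{Ass}(R/I)$.

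By the remark in Definition \ref{Def. Corner elements}, an $I$-corner element $f$ would satisfy $(I:f)=\mathfrak m$. So $\mathfrak m\notin\mathrm{Ass}(R/I)$ is equivalent to $\mathrm{C}_R(I)=\emptyset$, which is the second form of the conclusion.

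The only step needing any care is the small case $k\le 2$, since the lemma is stated only for $k\ge3$. Its resolution via Theorem \ref{5.2KHN} and the coprime regular-sequence observation is routine. The main argument is just the order-reversal contradiction from condition (i); condition (ii) is not needed.
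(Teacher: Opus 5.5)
Your proof is correct. The decisive step matches the paper's: two generators whose $x$- and $y$-exponents cross strictly cannot both fit a common weakly decreasing ordering of the two exponent sequences. Where you differ is in how you obtain those two generators.

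You extract them from condition (i) of Lemma \ref{Lem.Detecting-K[x,y,z]}, that is, from the characterization in Theorem \ref{Th.Detecting-General}. That result is stated only for $k\geq 3$, so you need a separate treatment of $k\le 2$ via Theorem \ref{5.2KHN} and the regular-sequence argument.

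The paper instead works directly with a corner element $f=x^{\alpha}y^{\beta}z^{\gamma}$. Since $xf,yf\in I$ and $f\notin I$, there are generators $u_c\mid xf$ and $u_d\mid yf$ with $u_c\nmid f$ and $u_d\nmid f$. This gives $r_{d,1}\le\alpha<r_{c,1}$ and $r_{c,2}\le\beta<r_{d,2}$, and monotonicity then yields the same order-reversal contradiction.

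The paper's argument is self-contained and uniform in $k$, with no small-case split. In effect it re-derives only the piece of condition (i) that you use. Your version is shorter in the main case, but it relies on a heavier cited characterization and needs the extra case analysis for $k\le 2$.
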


\begin{proof} 
Without loss of generality, we may assume that  $r_{1,i} \geq r_{2,i} \geq \cdots \geq r_{k, i}$ for $i=1,2$. We let 
  $\mathfrak{m}\in \mathrm{Ass}(R/I)$ and seek a contradiction. This yields that  there exists some monomial $f\in R$ such that  $f\notin I$ and 
  $xf,yf,zf\in I$.  Assume $f=x^{\alpha}y^{\beta}z^{\gamma}$  with $\alpha, \beta , \gamma\geq 0$.
   This implies that  $x^{\alpha+1}y^{\beta}z^{\gamma}\in I$ and   $x^{\alpha}y^{\beta+1}z^{\gamma}\in I$. 
Accordingly,  there exist  $1\leq c \leq k$  and $1\leq d \leq k$  such that 
  \begin{equation} \label{15}
  x^{r_{c,1}} y^{r_{c,2}} z^{r_{c,3}} \mid x^{\alpha+1}y^{\beta}z^{\gamma} \;  \text {and} \;  
  x^{r_{d,1}} y^{r_{d,2}} z^{r_{d,3}} \mid  x^{\alpha} y^{\beta+1}z^{\gamma}.  
    \end{equation}
       In light   of    $x^{\alpha}y^{\beta}z^{\gamma} \notin I$,   we can conclude that 
      \begin{equation} \label{16}
       x^{r_{c,1}} y^{r_{c,2}} z^{r_{c,3}} \nmid x^{\alpha}y^{\beta}z^{\gamma} \;  \text {and} \;  
  x^{r_{d,1}} y^{r_{d,2}} z^{r_{d,3}} \nmid  x^{\alpha} y^{\beta}z^{\gamma}.
      \end{equation}
     It follows now from (\ref{15}) and (\ref{16})  that  $\alpha < r_{c,1} \leq \alpha+1$, $\beta < r_{d,2} \leq \beta+1$, 
   $r_{c, 2} \leq \beta$, and $r_{d, 1} \leq \alpha$. We thus get  $r_{d, 1} \leq \alpha < r_{c,1}$ and 
   $r_{c, 2} \leq \beta < r_{d,2}$. From  the assumption, we obtain $d<c$ and  $c<d$, which is  a contradiction. We therefore deduce  that   $\mathfrak{m}\notin \mathrm{Ass}(R/I)$, as claimed.  
\end{proof}


We now turn our attention to the existence of the maximal ideal by means of the notion of depth. 
For this purpose, we need to recall  the well-known Depth Lemma, see for instance \cite[Lemma 2.3.9]{V1}. 

\begin{lemma}\label{dep}(Depth Lemma)\index{Depth Lemma}
If $0 \rightarrow U \rightarrow M \rightarrow N \rightarrow 0$ is a short exact sequence of finitely generated graded $R$-modules, then the following statements hold:
\begin{enumerate}
\item[(1)] $\depth M \geq \min\{\depth N,\depth U\}$.
\item[(2)] $\depth U \geq \min\{\depth M,\depth N +1\}$.
\item[(3)] $\depth N \geq \min\{\depth U-1,\depth M\}$.
\end{enumerate}
\end{lemma}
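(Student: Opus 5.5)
The statement to prove is the Depth Lemma for a short exact sequence $0 \rightarrow U \xrightarrow{f} M \xrightarrow{g} N \rightarrow 0$ of finitely generated graded $R$-modules. I will derive all three inequalities from the long exact sequence of local cohomology with support in the graded maximal ideal $\mathfrak{m}$ (equivalently, of $\mathrm{Ext}_R^i(K,-)$). The key input is Grothendieck's characterization
$$\depth L=\min\{i : H^i_{\mathfrak{m}}(L)\neq 0\}$$
for a finitely generated graded module $L$, with the convention $\depth 0=\infty$. In the graded setting this holds with $\mathfrak{m}=(x_1,\ldots,x_n)$, because the graded modules behave like modules over the local ring $R_{\mathfrak{m}}$.

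From the short exact sequence we obtain the exact sequence
$$\cdots\rightarrow H^{i-1}_{\mathfrak{m}}(N)\rightarrow H^i_{\mathfrak{m}}(U)\rightarrow H^i_{\mathfrak{m}}(M)\rightarrow H^i_{\mathfrak{m}}(N)\rightarrow H^{i+1}_{\mathfrak{m}}(U)\rightarrow\cdots.$$
Each inequality then follows from the fact that in an exact sequence $A\rightarrow B\rightarrow C$, if $A=C=0$ then $B=0$.

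\textbf{(1)} Take $i<\min\{\depth U,\depth N\}$. Then $H^i_{\mathfrak{m}}(U)=0$ and $H^i_{\mathfrak{m}}(N)=0$, so $H^i_{\mathfrak{m}}(M)=0$. Hence $\depth M\geq \min\{\depth U,\depth N\}$.

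\textbf{(2)} Take $i<\min\{\depth M,\depth N+1\}$. Then $H^{i-1}_{\mathfrak{m}}(N)=0$ and $H^i_{\mathfrak{m}}(M)=0$, so $H^i_{\mathfrak{m}}(U)=0$. Hence $\depth U\geq \min\{\depth M,\depth N+1\}$.

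\textbf{(3)} Take $i<\min\{\depth U-1,\depth M\}$. Then $H^i_{\mathfrak{m}}(M)=0$ and $H^{i+1}_{\mathfrak{m}}(U)=0$, so $H^i_{\mathfrak{m}}(N)=0$. Hence $\depth N\geq \min\{\depth U-1,\depth M\}$.

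The only step that needs care is justifying the cohomological characterization of depth in the graded case, together with the conventions when some module is zero. I would dispose of the zero cases first. If $U=0$ then $M\cong N$, and if $N=0$ then $U\cong M$; in both situations the inequalities are immediate under $\depth 0=\infty$. For the characterization itself I would cite the standard fact that for graded modules the depth over $R$ equals the depth over $R_{\mathfrak{m}}$.

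Alternatively, the same argument can be run with $\mathrm{Ext}^i_R(K,-)$ and the criterion $\depth L=\min\{i:\mathrm{Ext}^i_R(K,L)\neq 0\}$. The proof is word for word the same, using the long exact Ext sequence in place of the local cohomology sequence.
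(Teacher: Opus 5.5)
Your proof is correct: combining the long exact sequence in $H^i_{\mathfrak m}(-)$ (or in $\mathrm{Ext}^i_R(K,-)$) with the characterization of depth as the first nonvanishing index gives (1)--(3) exactly as you wrote, with the index shifts in (2) and (3) handled correctly. The paper does not prove this lemma itself and only refers to Lemma 2.3.9 of Villarreal's book, and your argument is the standard textbook proof of that result.
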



In order to prove several of the results below, we will make use of the following auxiliary lemma, which we state here for convenience.

\begin{lemma}(\cite[Corollary 1.3]{asia})\label{asia}
Let $I \subset S=K[x_1,\ldots,x_n]$ be a monomial ideal and let $u$ be a monomial with $u \notin I$. Then
\[
\depth\big(S/(I : u)\big) \geq \depth(S/I).
\]
\end{lemma}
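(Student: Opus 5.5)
The plan is to reduce to the case where $u$ is a single variable, and then compare projective dimensions through multigraded Betti numbers; by the Auslander--Buchsbaum formula, $\depth(S/L)=n-\operatorname{pd}(S/L)$ for every monomial ideal $L$, so it suffices to prove $\operatorname{pd}(S/(I:u))\le \operatorname{pd}(S/I)$.

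\emph{Reduction.} Write $u=x_{j_1}x_{j_2}\cdots x_{j_d}$ with repetitions allowed. We have $(I:u)=((\cdots((I:x_{j_1}):x_{j_2})\cdots):x_{j_d})$. Each intermediate colon ideal $(I:x_{j_1}\cdots x_{j_e})$ is a proper monomial ideal, because $u\notin I$. So by induction on $\deg u$ it is enough to prove $\depth(S/(I:x_1))\ge \depth(S/I)$ when $x_1\notin I$ (the case of $x_i$ is symmetric).

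\emph{Main step.} Set $J=(I:x_1)$ and use the upper Koszul simplicial complexes. For $b\in\mathbb{N}^n$, let
\[
K^b(L)=\{F\subseteq [n] : x^{b-F}\in L\},
\]
with the convention that $x^{b-F}$ is required to have nonnegative exponents. Then $\beta_{i,b}(L)=\dim_K \tilde H_{i-1}(K^b(L);K)$.

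\begin{itemize}
\item[(a)] If $b_1\ge 1$, every $F$ with $x^{b-F}$ defined satisfies $x^{b-F}\in J$ if and only if $x^{b+e_1-F}\in I$. Hence $K^b(J)=K^{b+e_1}(I)$, and therefore $\beta_{i,b}(J)=\beta_{i,b+e_1}(I)$.
\item[(b)] If $b_1=0$, only faces $F\not\ni 1$ can occur in $K^b(J)$. In this case I would compare $K^b(J)$ with $K^{b+e_1}(I)$ and with $K^{b}(I)$, using the restriction to $[n]\setminus\{1\}$ and the link of the vertex $1$. The aim is to show that a nonzero $\tilde H_{i-1}(K^b(J))$ forces a nonzero Betti number $\beta_{i',b'}(I)$ with $i'\ge i$, for $b'=b$ or $b'=b+e_1$.
\end{itemize}

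From (a) and (b) we obtain $\operatorname{pd}(J)\le \operatorname{pd}(I)$, and hence the desired inequality of depths.

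The main obstacle is case (b), where $b_1=0$. If the simplicial bookkeeping there becomes unmanageable, I would bypass it in one of two ways. The first is to use Takayama's formula for the multigraded local cohomology $H^i_{\mathfrak m}(S/L)_a$, noting that the degree complexes of $J$ in degree $a$ coincide with those of $I$ in degree $a+e_1$ when $a_1\ge 0$. The second is to use the multigraded isomorphism $S/J\cong (x_1S+I)/I\,(+e_1)$, a submodule of $S/I$, combined with a limiting argument: $(I:x_1^k)$ stabilizes, and the stable ideal is extended from $K[x_2,\dots,x_n]$. Of these, the Takayama comparison $H^i_{\mathfrak m}(S/J)_a\cong H^i_{\mathfrak m}(S/I)_{a+e_1}$ for $a_1\ge0$ seems the most direct, since negative $x_1$-degrees of both modules are governed by the same complexes. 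This would give $H^i_{\mathfrak m}(S/I)\ne 0$ whenever $H^i_{\mathfrak m}(S/J)\ne0$, and hence $\depth(S/J)\ge\depth(S/I)$.
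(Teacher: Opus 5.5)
The paper does not prove Lemma~\ref{asia}; it only quotes it from the literature, so your proposal has to stand on its own. It does, but one step is stated rather than proved. The reduction to $u=x_1$ is valid: each partial product $x_{j_1}\cdots x_{j_e}$ divides $u$, so it is not in $I$, and the next variable is not in the intermediate colon ideal. Case (a) is also correct.

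As written, case (b) is only an aim, and it is the one place where something must be proved. It closes at once with the tools you named. Put $\Delta=K^{b+e_1}(I)$ with $b_1=0$. For $1\notin G$ you have $x^{b-G}=x^{b+e_1-(G\cup\{1\})}$. Hence $K^b(J)=\Delta\setminus 1$, the faces of $\Delta$ avoiding $1$, and $K^b(I)=\operatorname{lk}_\Delta(1)$.
\begin{itemize}
\item If $\{1\}\notin\Delta$, then $K^b(J)=\Delta$ and $\beta_{i,b}(J)=\beta_{i,b+e_1}(I)$.
\item Otherwise $\Delta=(\Delta\setminus 1)\cup\operatorname{st}_\Delta(1)$. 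The star is acyclic and the intersection is the link, so Mayer--Vietoris gives an exact sequence $\tilde H_{i-1}(K^b(I))\to\tilde H_{i-1}(K^b(J))\to\tilde H_{i-1}(K^{b+e_1}(I))$.
\end{itemize}
Thus $\beta_{i,b}(J)\neq 0$ forces $\beta_{i,b}(I)\neq 0$ or $\beta_{i,b+e_1}(I)\neq 0$, so $i'=i$ and $\operatorname{pd}J\le\operatorname{pd}I$.

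Your Takayama fallback is also correct, with one caveat. Define $\Delta_a(L)$ ideal-theoretically, $F\in\Delta_a(L)\iff x^a\notin LS_{F\cup G_a}$, localizing at the variables indexed by $F\cup G_a$. Do not use the generator-based definition, because $\mathcal{G}(J)$ is not simply related to $\mathcal{G}(I)$. With this definition:
\begin{itemize}
\item $(I:x_1)S_W=(IS_W:x_1)$ gives $\Delta_a(J)=\Delta_{a+e_1}(I)$ for $a_1\ge 0$;
\item $JS_W=IS_W$ whenever $1\in W$ gives $\Delta_a(J)=\Delta_a(I)$ for $a_1<0$.
\end{itemize}
Your third suggestion, a submodule embedding plus a limit, is not an argument as it stands, since depth is not monotone under passing to submodules.

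For comparison, there is a shorter route that avoids both the reduction to one variable and all simplicial homology. Take $u=x^a$. The $\mathbb{Z}^n$-graded submodule $(S/I)_{\ge a}=\bigoplus_{b\ge a}(S/I)_b$ is isomorphic to $(S/(I:u))(-a)$ via $\bar f\mapsto\overline{uf}$. The functor $M\mapsto M_{\ge a}$ is exact, and it sends $S(-c)$ to a copy of $S(-c')$ with $c'=\max(a,c)$ taken componentwise. Apply it to a minimal multigraded free resolution of $S/I$. This gives a possibly non-minimal free resolution of $S/(I:u)$ with the same ranks. Hence $\beta_i(S/(I:u))\le\beta_i(S/I)$ for all $i$, and Auslander--Buchsbaum gives the lemma.

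That argument buys brevity, a bound on total Betti numbers, and validity for any finitely generated $\mathbb{Z}^n$-graded module. Your argument, once (b) is filled in, gives finer multidegree-by-multidegree information. It shows which multidegrees of $I$ account for each nonzero Betti number, or each nonzero local cohomology piece, of $(I:x_1)$.
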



\begin{theorem}\label{t1}
Let $p\geq 0$ and $a>i_p>i_{p-1}>\cdots>i_1>i_0=0$ be some integers. Let $I_j\subset R'=K[x_1,\ldots,x_{n-1}]$, $0\leq j\leq p$, be some nonzero 
proper monomial ideals. We consider the monomial ideal 
 $$I=(x_n^a)+x_n^{i_p}I_p + \cdots + x_n^{i_1}I_1 + I_0 \subset R=K[x_1,\ldots,x_n],$$ and we assume that
$$\mathcal{G}(I)=\{x_n^a\}\cup \mathcal{G}(x_n^{i_p}I_p) \cup \cdots \cup \mathcal{G}(x_n^{i_1}I_1) \cup \mathcal{G}(I_0).$$
Let $L_j=I_0+\cdots+I_j$  for $0\leq j\leq p$.  Then the following statements hold:

\begin{enumerate}
\item[(1)] $\depth(R/I)=\depth(R'/L_p)$ or there exists $0\leq q<p$ such that
           \begin{enumerate}
           \item $\depth(R/I)=\depth(R'/L_q)$,
           \item $\depth(R/I)\leq \depth(R'/L_j)+1$  for $0\leq j\leq q-1$, where $q>0$, 
					 \item $\depth(R/I)\leq \depth(R'/L_j)$  for $q+1\leq j\leq p-1$, where $q\leq p-2$,  and
					 \item $\depth(R/I)\leq \depth(R'/L_p)-1$.
					 \end{enumerate}
\item[(2)] $\depth(R/I)=0$ if and only if there exists $0\leq q\leq p$ such that $\depth(R'/L_q)=0$.

\end{enumerate}
\end{theorem}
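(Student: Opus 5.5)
Fix the data of Theorem~\ref{t1} and set $i_{p+1}:=a$, $L_{-1}:=0$. I would argue by induction on $p$, using Lemma~\ref{dep} on short exact sequences obtained by colon and sum with powers of $x_n$.

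\textbf{Key structural identities.} Since every generator other than $x_n^a$ lies in $R'$ times a power $x_n^{i_j}$ with $i_j<a$, we get
\[
(I:x_n^{i_p})=(x_n^{a-i_p})+L_pR,\qquad I+(x_n^{i_p})=(x_n^{i_p})+x_n^{i_{p-1}}I_{p-1}+\cdots+I_0 ,
\]
using that $x_n^{i_j-i_p}I_j$ becomes $I_j$ once $i_j\le i_p$, so that $(I:x_n^{i_p})$ collapses to $L_p$. The ideal $I+(x_n^{i_p})$ has the same shape as $I$, with $a$ replaced by $i_p$ and $p$ replaced by $p-1$. The same induction handles it after checking that the minimal generating set condition is inherited; if it fails, some $I_j$ is absorbed into $L_{j-1}$, and that should be harmless.

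\textbf{Depth of the pieces.} $R/((x_n^{b})+L_jR)\cong (R'/L_j)[x_n]/(x_n^b)$ has depth $\depth(R'/L_j)$. Consider the exact sequence
\[
0\to R/(I:x_n^{i_p})(-i_p)\xrightarrow{\cdot x_n^{i_p}} R/I\to R/(I+(x_n^{i_p}))\to 0 .
\]
Write $D=\depth(R/I)$, $A=\depth(R'/L_p)$, and $B=\depth R/(I+(x_n^{i_p}))$. Lemma~\ref{dep} gives $D\ge\min\{A,B\}$, $A\ge \min\{D,B+1\}$, and $B\ge\min\{A-1,D\}$. Lemma~\ref{asia} also gives $A\ge D$, since $x_n^{i_p}\notin I$.

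\textbf{Case split.} If $B\ge A$ then $D=A$, which is the first alternative. If $B<A$, then $D\ge B$ and $B\ge\min\{A-1,D\}$. If $D<A-1$ this forces $B\ge D$, hence $D=B$; if $D=A-1$ we have $B\ge D$ as well, and $B\le D$ holds because $B<A$, $B\ge D$ and $B=A-1$ is possible. In either case I aim to show $D=B$ and $D\le A-1$. Then I apply induction to $I+(x_n^{i_p})$, whose "$L$" data are $L_0,\dots,L_{p-1}$.
\begin{itemize}
\item If induction gives $B=\depth(R'/L_{p-1})$, take $q=p-1$. Conditions (a) and (d) hold, and (b) comes from the monotonicity inequalities below.
\item Otherwise we get some $q<p-1$ with (a)--(d) for $p-1$. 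Its "(d)", namely $B\le\depth(R'/L_{p-1})-1$, gives (c) at $j=p-1$, and (d) is $D\le A-1$.
\end{itemize}
For (b) in the first bullet, $j<q$, I need $D\le\depth(R'/L_j)+1$. I would get this from Lemma~\ref{asia} applied to the colon $(I:x_n^{i_j}y)$, with $y$ chosen to kill higher strata, or directly from the inductive statement. This bookkeeping of the inequalities (b)--(d) across the induction is where I expect the main obstacle. The worry is the case $D=A-1=B$, where one must decide whether to take $q=p$-type or recurse; the definitions allow $q<p$ there.

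\textbf{Part (2).} If $D=0$, part (1) gives $\depth(R'/L_q)=0$ for some $q$, since $D$ equals one of these depths. Conversely, suppose $\depth(R'/L_q)=0$. Take $u\in R'$ with $(L_q:u)=\mathfrak m'$ and $u\notin L_q$, and set $f=u\,x_n^{i_{q+1}-1}$. I would check $(I:f)=\mathfrak m$ directly from the generator structure:
\begin{itemize}
\item $x_nf$ lies in $x_n^{i_{q+1}}I_{q+1}$ or in $(x_n^a)$ when $q=p$, which needs $u\in I_{q+1}$; this is fine if $q=p$.
\end{itemize}
Alternatively, I would avoid this subtlety by using $D\le\depth(R'/L_j)$ bounds. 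This is a second place to be careful, and I would first try the corner-element argument with $u$ chosen to lie in $L_{q+1}$ when possible (take $q$ maximal with depth $0$).
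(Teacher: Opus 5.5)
\textbf{Part (1): condition (b) is never proved.} Your top-down recursion via $0\to R/(I:x_n^{i_p})\to R/I\to R/(I+(x_n^{i_p}))\to 0$ correctly yields the dichotomy together with (a), (c) and (d). The trouble is at the step where the recursion terminates. There the inductive hypothesis for $I+(x_n^{i_p})$ returns its first alternative $B=\depth(R'/L_{p-1})$, and you take $q=p-1$. You then must show $D\le\depth(R'/L_j)+1$ for all $j\le p-2$. Nothing you have supplies this:
\begin{itemize}
\item the first alternative of the inductive statement says nothing about these $L_j$;
\item the ``monotonicity inequalities below'' never appear;
\item Lemma~\ref{asia} by itself only gives lower bounds $\depth R/(I:w)\ge D$.
\end{itemize}

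This is a real gap, not bookkeeping: the quotients $R/(I+(x_n^{i_k}))$ do not carry enough information. For $p=2$, the values $D=A$, $B=A-1=\depth(R'/L_1)$, $\depth(R'/L_0)=D-2$ satisfy every inequality that your two exact sequences and Lemma~\ref{asia} impose, yet they violate (b).

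What is needed are the submodules $x_n^{i_j}(R/I)\cong R/(I:x_n^{i_j})$. For each $0\le j\le p-1$, consider
$$0\to R/(I:x_n^{i_{j+1}})\to R/(I:x_n^{i_j})\to R/(L_j,x_n^{i_{j+1}-i_j})\to 0.$$
Both left terms have depth $\ge D$ by Lemma~\ref{asia}, so Lemma~\ref{dep}(3) gives $\depth(R'/L_j)\ge D-1$ for every $j<p$. This closes (b) for whichever $q$ your recursion produces. It is precisely the paper's route. The paper uses the chain $D\le\depth R/(I:x_n^{i_1})\le\cdots\le\depth R/(I:x_n^{i_p})=\depth(R'/L_p)$ from the start, takes $q$ to be the last index before the first strict increase, and needs no induction.

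\textbf{Part (2): the converse is unfinished for $q<p$.} With your choice of corner element, $x_n\cdot ux_n^{i_{q+1}-1}\in I$ needs $u\in I_{q+1}$. Your parenthetical hint does work once justified. Take $q$ maximal with $\depth(R'/L_q)=0$ and suppose $q<p$. Then any $u$ with $(L_q:u)=(x_1,\ldots,x_{n-1})$ lies in $L_{q+1}$; otherwise $(L_{q+1}:u)$ would be a proper ideal containing $(x_1,\ldots,x_{n-1})$, forcing $\depth(R'/L_{q+1})=0$. Hence $u\in I_{q+1}\setminus L_q$, and $(I:ux_n^{i_{q+1}-1})=\mathfrak m$. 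The paper instead observes
$$(x_1,\ldots,x_{n-1},x_n^{a-i_q})\subseteq(I:x_n^{i_q}u)\subseteq(x_1,\ldots,x_{n-1},x_n^{i_{q+1}-i_q}),$$
so this colon is $\mathfrak m$-primary and no special choice of $q$ is needed.

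\textbf{Two minor points.}
\begin{itemize}
\item The generating-set hypothesis passes to $I+(x_n^{i_p})$ verbatim. No $x_n^{i_k}h$ with $k<p$ is divisible by $x_n^{i_p}$, and $x_n^{i_p}$ is divisible by none of them since each $I_k$ is proper. So there is no absorption to worry about.
\item Your case split should simply be $D=A$ versus $D<A$. In the second case, Lemma~\ref{dep}(1),(3) force $B<A$ and $D=B$. The possibility $D=A$, $B=A-1$, which your text overlooks, just falls under the first alternative.
\end{itemize}
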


\begin{proof}
(1) For $1\leq j\leq p$, we consider the short exact sequences
\begin{equation}\label{seq}
0 \to \frac{R}{(I:x_n^{i_j})} \to \frac{R}{(I:x_n^{i_{j-1}})} \to \frac{R}{((I:x_n^{i_{j-1}}),x_n^{i_j-i_{j-1}})} \to 0.
\end{equation}
Note that, for all $1\leq j\leq p$, we have
 $$((I:x_n^{i_{j-1}}),x_n^{i_j-i_{j-1}})=(L_{j-1},x_n^{i_j-i_{j-1}}).$$
In addition, it is easy to see that  $(I:x_n^{i_p})=(x_n^{a-i_p},L_p)$. 
 Since $x_n^{i_j-i_{j-1}}$ is regular on $R/L_{j-1}R$, for $1\leq j\leq p$, it follows that 
\begin{align*}
\depth(R/((I:x_n^{i_{j-1}}),x_n^{i_j-i_{j-1}})) &= \depth(R/(L_{j-1},x_n^{i_j-i_{j-1}})) \\
 &= \depth(R/L_{j-1}R)-1=\depth(R'/L_{j-1}).
\end{align*}
Similarly, since $x_n^{a-i_p}$ is regular on $R/L_pR$, this implies that 
$$\depth(R/(I:x_n^{i_p}))=\depth(R/(x_n^{a-i_p},L_p)) = \depth(R'/L_p).$$
 Thanks to  $x_n^{i_p} \notin I$, one can now deduce from Lemma \ref{asia}  that 
$$\depth(R'/L_p)=\depth(R/(I:x_n^{i_p}))\geq \depth(R/I).$$
If $\depth(R/(I:x_n^{i_p}))=\depth(R/I)$, then $\depth(R/I)=\depth(R'/L_p)$ and there is nothing to prove. 
From Lemma \ref{asia}, we have 
$$\depth(R/I)\leq \depth(R/(I:x_n^{i_1}))\leq \cdots \leq \depth(R/(I:x_n^{i_p})).$$
If  $\depth(R/(I:x_n^{i_p}))>\depth(R/I)$, then  there exists $0\leq q<p$ such that
$$\depth(R/I)=\cdots=\depth(R/(I:x_n^{i_q}))<\depth(R/(I:x_n^{i_{q+1}})).$$
Taking $j=q+1$ in \eqref{seq}, we obtain the short exact sequence
$$0\to R/(I:x_n^{i_{q+1}}) \to R/(I:x_n^{i_{q}}) \to R/(L_q,x_n^{i_{q+1}-i_q}) \to 0.$$
Since $\depth(R/(I:x_n^{i_{q+1}}))>\depth(R/(I:x_n^{i_{q}}))=\depth(R/I)$, we can derive from   Lemma \ref{dep} that 
$$\depth(R/(L_q,x_n^{i_{q+1}-i_q}))=\depth(R'/L_q)=\depth(R/I).$$
Thus, we proved (a).  If $q>0$, by  applying  Lemma \ref{dep} to \eqref{seq},  we get $\depth(R/I)\leq \depth(R'/L_j)+1$ for $0\leq j\leq q-1$, 
which proves (b).  If $q<p-1$, by using  Lemma \ref{dep} to \eqref{seq},  we obtain  $\depth(R/I)\leq \depth(R'/L_j)$  for $q+1\leq j\leq p-1$, 
which proves (c). 
Finally, we have $$\depth(R'/L_p)=\depth(R/(I:x_n^{i_p}))\geq \depth(R/I)+1,$$ hence we get (d).

(2) If $\depth(R/I)=0$, then  it follows from (1) that there exists some $0\leq j\leq p$ with $\depth(R/I)=\depth(R'/L_j)=0$, as required.
    Conversely, assume that there exists some $0\leq j\leq p$ with $\depth(R'/L_j)=0$. Let $u\in R'$ be a monomial such that
		$(L_j:u)=(x_1,\ldots,x_{n-1})$. This gives  that  $$(x_1,\ldots,x_{n-1},x_n^{a-i_j}) \subset (I:x_n^{i_j}u).$$ 
 If  $j<p$, on account of  $$(I:x_n^{i_j}u) = (x_n^{a-i_j})+x_n^{i_p-i_j}(I_p:u)+\cdots+x_n^{i_{j+1}-i_{j}}(I_{j+1}:u)+(L_j:u),$$
	we  deduce that   $(x_1,\ldots,x_{n-1},x_n^{a-i_j}) \subset (I:x_n^{i_j}u)\subset (x_1,\ldots,x_{n-1},x_n^{i_{j+1}-i_j}).$ 
If $j=p$, then $(I:x_n^{a-1}u) = (x_n) + (L_p:u) = (x_1,\ldots, x_{n-1},x_n)=\mathfrak{m}$. 
	Thus,  $\sqrt{(I:x_n^{i_j}u)}=(x_1,\ldots,x_n)=\mathfrak m$. Hence, $\mathfrak m\in \Ass(R/I)$, and  so  $\depth(R/I)=0$.
		\end{proof}


\begin{remark}\label{rem1}
Let $I\subset R=K[x_1,\ldots,x_n]$ be a proper  nonzero  monomial ideal. Then $\depth(R/I)=n-1$ if and only if $I$ is principal and, otherwise, $\depth(R/I)\leq n-2$.
\end{remark}


The next corollary  establishes the third criterion of our framework.

\begin{corollary}\label{p1}
Let $p\geq 0$ and $a>i_p>i_{p-1}>\cdots>i_1>i_0=0$ be some integers. Let $I_j\subset R'=K[y,z]$, $0\leq j\leq p$, be some nonzero 
proper monomial ideals. We consider the monomial ideal $$I=(x^a)+x^{i_p}I_p + \cdots + x^{i_1}I_1  +  I_0 \subset R=K[x,y,z],$$
 and we assume that
$\mathcal{G}(I)=\{x^a\}\cup \mathcal{G}(x^{i_p}I_p) \cup \cdots \cup  \mathcal{G}(x^{i_1}I_1)    \cup       \mathcal{G}(I_0).$
Then the following statements are equivalent:
\begin{enumerate}
\item[(1)] $\depth(R/I)=1$.
\item[(2)] $I_j=(u_j)$, $0\leq j\leq p$, for some monomials $u_j\in K[y,z]$, such that $u_p \mid u_{p-1} \mid \cdots \mid u_0$,
           and all the divisions are strict.
\end{enumerate}
\end{corollary}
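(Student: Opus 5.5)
We specialize Theorem \ref{t1} to $n=3$, with $x_n=x$ and $R'=K[y,z]$. Here $L_j=I_0+\cdots+I_j\subset K[y,z]$ are nonzero proper monomial ideals, so $\depth(R'/L_j)\in\{0,1\}$. By Remark \ref{rem1} (with $n=2$), $\depth(R'/L_j)=1$ exactly when $L_j$ is principal, and $\depth(R'/L_j)=0$ otherwise.

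For (2)$\Rightarrow$(1), assume $I_j=(u_j)$ with $u_p\mid u_{p-1}\mid\cdots\mid u_0$. Then $L_j=(u_0,\ldots,u_j)=(u_j)$ is principal for every $j$, so $\depth(R'/L_j)=1$ for all $0\le j\le p$. By Theorem \ref{t1}(2), $\depth(R/I)\neq 0$. Since $I$ is not principal (it contains $x^a$ and $\mathcal{G}(I_0)\neq\emptyset$, both in $\mathcal{G}(I)$), Remark \ref{rem1} gives $\depth(R/I)\le 1$. Hence $\depth(R/I)=1$. Strictness of the divisions is not needed here; it is automatic from the hypothesis on $\mathcal{G}(I)$, as explained below.

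For (1)$\Rightarrow$(2), assume $\depth(R/I)=1$. By Theorem \ref{t1}(2), $\depth(R'/L_q)\ne0$ for all $q$, so each $L_q$ is principal, say $L_q=(v_q)$. Since $L_0\subseteq L_1\subseteq\cdots\subseteq L_p$, we get $v_p\mid v_{p-1}\mid\cdots\mid v_0$.

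The remaining task is to show that each $I_j$ is itself principal, equal to $(v_j)$, and that the divisions are strict. This is the main obstacle, and the hypothesis on $\mathcal{G}(I)$ is what resolves it.
\begin{itemize}
\item[$\bullet$] For $j=0$ we have $I_0=L_0=(v_0)$.
\item[$\bullet$] For $j\ge1$, every monomial of $\mathcal{G}(I_j)$ must remain a minimal generator of $I$ after multiplication by $x^{i_j}$. If some generator $w\in\mathcal{G}(I_j)$ satisfied $w\in L_{j-1}$, then $x^{i_j}w$ would be divisible by a generator of some $x^{i_l}I_l$ with $l<j$, since $i_l<i_j$. That contradicts $x^{i_j}w\in\mathcal{G}(I)$. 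So no generator of $I_j$ lies in $L_{j-1}=(v_{j-1})$.
\item[$\bullet$] Since $L_j=(v_j)=L_{j-1}+I_j$, the monomial $v_j$ must be a generator of $I_j$ (it lies in $I_j$ or in $L_{j-1}$, and $v_j\in L_{j-1}$ would force $L_j=L_{j-1}$, leaving the generators of $I_j$ inside $L_{j-1}$). Every generator of $I_j$ is a multiple of $v_j$, so minimality of $\mathcal{G}(I_j)$ gives $I_j=(v_j)$.
\item[$\bullet$] Since $v_j\notin L_{j-1}=(v_{j-1})$, we have $v_j\ne v_{j-1}$. Combined with $v_j\mid v_{j-1}$, the division is strict.
\end{itemize}
Setting $u_j=v_j$ gives (2).

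The delicate points to verify carefully are the divisibility argument for the minimal generators (comparing exponents of $x$ and of the $K[y,z]$ part) and the nonprincipality of $I$ used to bound the depth by $1$.
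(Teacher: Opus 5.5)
Your proof is correct and follows essentially the same route as the paper: both directions reduce, via Theorem \ref{t1}(2) and Remark \ref{rem1}, to every $L_j$ being principal, and then the hypothesis on $\mathcal{G}(I)$ shows inductively that each $I_j$ is generated by the generator of $L_j$, with strict divisions. Your inductive step (no generator of $I_j$ lies in $L_{j-1}$, so $v_j\in I_j$ and hence $I_j=(v_j)$) is a slightly more careful version of the paper's argument that $u_0$ and a generator of $I_1$ cannot be incomparable.
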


\begin{proof}
We let $L_j:=I_0+\cdots+I_j$ for $0\leq j\leq p$. First, note that $\depth(R/I)\in \{0,1\}$, since $I$ is not principal.
According to Theorem \ref{t1}, $\depth(R/I)=1$ if and only if $\depth(R'/L_j)>0$ for all $0\leq j\leq p$, which, according
to Remark \ref{rem1} is equivalent to $L_j$ is principal  for all $0\leq j\leq p$. We claim that this is equivalent to (2).
If $L_0$ is principal, then $I_0=L_0=(u_0)$ for some monomial $u_0\in R'$.Now, assume that $L_1$ is principal.
Since $\mathcal{G}(x^{i_1}I_1)\cup \mathcal{G}(I_0)\subset \mathcal{G}(I)$, if $v\in \mathcal{G}(I_1)$,  then $u_0\nmid v$, otherwise $x^{i_1}v$ would not be a minimal monomial
generator for $I$. On the other hand, if $v\in \mathcal{G}(I_1)$ with $v\nmid u_0$ and $u_0\nmid v$,  then $\{v,u_0\}\subset \mathcal{G}(L_1)$, a contradiction,
since $L_1$ is principal. It follows that $I_1=(u_1)$ for some monomial $u_1\in R'$ with $u_1\mid u_0$ and the division is strict. Also,
we have $L_1=(u_1)$. Now, since $L_2=I_2+L_1$ with $L_1=(u_1)$ and $L_2$ principal, using a similar argument, we deduce that
$I_2=(u_2)$ for some monomial $u_2\in R'$ with $u_2\mid u_1$ and the division is strict. Continuing in the same manner, we show that
the assertion (2) holds. 

Conversely, if (2) is true, then  the ideals $L_j=I_j$ are principal  for $0\leq j\leq p$. Hence, $\depth(R'/L_j)=1$  for $0\leq j\leq p$.
According to Theorem \ref{t1}(2), it follows that $\depth(R/I)>0$. On the other hand, since $I$ is not principal, Remark \ref{rem1} implies $\depth(R/I)\leq 1$. Therefore, $\depth(R/I)=1$, as required.
\end{proof}


To illustrate the usefulness of Corollary \ref{p1}, we provide the following example, which re-proves   Proposition 2.5  in \cite{MST}. 
Before  stating it, we require to establish the next proposition.  Generally, if $I$ is a monomial ideal, and  $f_1,\ldots,f_s \in \mathcal{G}(I)$, 
then  it is possible $f_1\cdots f_s \notin \mathcal{G}(I^s)$ for some $s>1$. For example, suppose that 
\[
I=(x^{50},x^{40}y^{10},x^{39}y^{34},x^{38}y^{35},x^{37}y^{36},
x^{36}y^{37},x^{35}y^{38},x^{34}y^{39},x^{10}y^{40},y^{50}).
\]
Then one can easily check that 
\[
I^2=(y^{100},x^{10}y^{90},x^{20}y^{80},x^{40}y^{60},
x^{50}y^{50},x^{60}y^{40},x^{80}y^{20},x^{90}y^{10},x^{100}), 
\]
while $x^{50}\cdot x^{39}y^{34}= x^{89}y^{34}\notin \mathcal{G}(I^2),$ 
$x^{40}y^{10}\cdot x^{39}y^{34}= x^{79}y^{44}\notin \mathcal{G}(I^2),$ and 
$x^{39}y^{34}\cdot x^{38}y^{35} = x^{77}y^{69}\notin \mathcal{G}(I^2).$ 


\begin{proposition}\label{min.gens.2}
Let $I\subset R=K[x_1,\ldots,x_n]$ be a monomial ideal such that all the
minimal monomial generators of $I$ have the same degree. Then, for any
integer $t\geq 1$, every product of $t$ minimal monomial generators of $I$
belongs to $\mathcal{G}(I^t)$.
\end{proposition}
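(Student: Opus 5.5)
The argument is a degree count. Let $d$ be the common degree of the elements of $\mathcal{G}(I)$. The plan is to show that every monomial of $I^t$ has degree at least $td$, and that the minimal generators of $I^t$ are exactly its monomials of degree $td$.

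\textbf{Step 1.} Recall that $I^t$ is a monomial ideal generated by the products $f_1\cdots f_t$ with $f_1,\ldots,f_t\in\mathcal{G}(I)$. Hence every monomial $w\in I^t$ is divisible by such a product, so $\deg w\geq td$. Every product $u=f_1\cdots f_t$ of $t$ minimal generators has degree exactly $td$.

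\textbf{Step 2.} Fix such a product $u$. Since $u\in I^t$, there is some $v\in\mathcal{G}(I^t)$ with $v\mid u$. By Step 1, $\deg v\geq td=\deg u$. A monomial dividing $u$ with degree at least $\deg u$ must equal $u$, so $v=u$ and $u\in\mathcal{G}(I^t)$.

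\textbf{Step 3.} Note explicitly that the products might coincide for different choices of factors. This does not matter, since the claim only concerns membership in $\mathcal{G}(I^t)$.

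There is no real obstacle here. The only point requiring care is Step 1: the lower bound $\deg w\geq td$ must be justified for \emph{all} monomials of $I^t$, not just the products. This holds because monomial ideals have the property that a monomial lies in the ideal if and only if it is divisible by some generator. The paper's earlier example with $I\subset K[x,y]$ shows where equigeneration is used: without it, a product such as $x^{50}\cdot x^{39}y^{34}$ can be strictly divisible by another element of $I^2$ of smaller degree.
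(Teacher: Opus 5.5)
Your proof is correct and is essentially the paper's own degree-counting argument. The paper phrases it by contradiction: it takes $g\in I^t$ properly dividing $f$, picks a product $h_1\cdots h_t\mid g$, and uses equal degrees to force $h_1\cdots h_t=f$. Your direct comparison of a minimal generator $v\mid u$ with the lower bound $\deg v\geq td$ is the same idea.
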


 \begin{proof}
Let \(f_1,\ldots,f_t\in \mathcal{G}(I)\), and set $f:=f_1\cdots f_t.$ We want to show that \(f\in \mathcal{G}(I^t)\). 
Since \(I\) is a monomial ideal, \(I^t\) is generated by all products of \(t\) minimal monomial generators of \(I\). Hence, 
 $f\in I^t .$ It remains to prove that \(f\) is a minimal monomial generator of \(I^t\).
Suppose, for contradiction, that \(f\notin \mathcal{G}(I^t)\). Then there  exists a monomial \(g\in I^t\) such that
 $g\mid f$ and $g\neq f.$ Because \(g\in I^t\), there exist \(h_1,\ldots,h_t\in\mathcal{G}(I)\) such
that $h_1\cdots h_t\mid g.$  Thus, $h_1\cdots h_t\mid f_1\cdots f_t.$ 
By assumption, all minimal monomial generators of \(I\) have the same
degree, say \(d\). Hence, we obtain $\deg(h_1\cdots h_t)=td$ and $\deg(f_1\cdots f_t)=td.$ 
Due to  \(h_1\cdots h_t\) divides \(f_1\cdots f_t\) and both monomials have the
same degree, we must have $h_1\cdots h_t=f_1\cdots f_t=f.$ Consequently,
  $f\mid g$ and $g\mid f,$ which implies that $g=f$, contradicting the assumption that \(g\neq f\). 
Therefore, $f_1\cdots f_t\in \mathcal{G}(I^t)$, as desired.  
\end{proof}


\begin{example} \label{example-for-p1}  
\em{
Let $m\geq 2$ be an integer and consider the ideal
$$I=(x^m,xy^{m-2}z,y^{m-1}z)\subset R=K[x,y,z].$$
Our aim is to show  that $\Ass(R/I^t)=\begin{cases} \{(x,y),(x,z)\},&t\leq m-1 \\ 
\{(x,y),(x,z),(x,y,z)\},&t\geq m  \end{cases}$.

It is easy to see that $I$ has the following minimal primary decomposition
 $$I = (x, y^{m-1}) \cap (x^m, y^{m-2}) \cap (x^m, z).$$
Hence, $\Ass(R/I)=\mathrm{Min}(I)=\{(x,y),(x,z)\}.$ 

By virtue of $I = (x^m)+y^{m-2}z(x,y)$,
for any $t\geq 1$, we can write  
\begin{equation}\label{eq1}
I^t = \sum_{\ell=0}^t x^{m\ell}y^{(m-2)(t-\ell)}z^{t-\ell}(x,y)^{t-\ell}.
\end{equation}
Since all the minimal monomial generators of $I$ have degree $m$, it follows that the minimal monomial
generators of $I^t$ have degree $mt$ and, by Proposition \ref{min.gens.2}, every product of $t$  minimal monomial generators of 
$I$ is in $\mathcal{G}(I^t)$.    In what follows, we show that  $\mathcal{G}(I^t)=\{v_0,v_1,\ldots,v_{s_t}\}$, where 
$s_t=\frac{t(t+3)}{2}$.  Because 
\[
(x,y)^{t-\ell}
=
(x^{t-\ell},x^{t-\ell-1}y,\ldots,y^{t-\ell}),
\]
the $\ell$-th summand in \eqref{eq1} has the minimal monomial generators
\[
x^{m\ell+j}y^{(m-1)(t-\ell)-j}z^{t-\ell},
\qquad
0\leq j\leq t-\ell.
\]
By Proposition \ref{min.gens.2}, each of these monomials belongs to
$\mathcal{G}(I^t)$, since it is the product of $\ell$ copies of $x^m$
and $t-\ell$ minimal monomial generators of $y^{m-2}z(x,y)$.
Moreover, no two of these monomials coincide. Indeed, if
\[
x^{m\ell+j}y^{(m-1)(t-\ell)-j}z^{t-\ell}
=
x^{m\ell'+j'}y^{(m-1)(t-\ell')-j'}z^{t-\ell'},
\]
then comparing the exponents of $z$ yields $\ell=\ell'$, and
consequently comparing the exponents of $x$ gives $j=j'$. Hence all
these monomials are distinct.
Since
\[
\sum_{\ell=0}^{t}(t-\ell+1)
=
\frac{(t+1)(t+2)}{2},
\]
there are exactly $\frac{(t+1)(t+2)}{2}$ such monomials. We index them
by setting
\[
s_{\ell}
=
\ell t-\frac{\ell(\ell-3)}{2},
\]
so that $s_0=0$ and  $s_{\ell+1}-s_{\ell}=t-\ell+1.$ We need only show  that no two distinct monomials in the displayed set divide each other.
Let
\[
v_{\ell,j}
=
x^{m\ell+j}y^{(m-1)(t-\ell)-j}z^{t-\ell}.
\]
Suppose that $v_{\ell,j}\mid v_{\ell',j'}.$ Comparing the exponents of $z$ gives  $t-\ell\leq t-\ell',$ and hence
 $\ell\geq \ell'.$ Now compare the total degrees in $x$ and $y$. We have
\[
\deg_x(v_{\ell,j})+\deg_y(v_{\ell,j})
=
m\ell+j+(m-1)(t-\ell)-j
=
(m-1)t+\ell,
\]
and similarly, $\deg_x(v_{\ell',j'})+\deg_y(v_{\ell',j'})=(m-1)t+\ell'.$ 
Since $v_{\ell,j}\mid v_{\ell',j'}$, the sum of the $x$- and $y$-degrees
of $v_{\ell,j}$ cannot exceed that of $v_{\ell',j'}$. Therefore,
 $(m-1)t+\ell\leq (m-1)t+\ell',$ which implies $\ell\leq\ell'.$ 
Combining this with $\ell\geq\ell'$, we obtain $\ell=\ell'.$ 
Now both monomials have the same $z$-degree and the same value of
$\ell$. The divisibility condition becomes
\[
x^{m\ell+j}y^{(m-1)(t-\ell)-j}
\mid
x^{m\ell+j'}y^{(m-1)(t-\ell)-j'}.
\]
Hence, by comparing the exponents of $x$, we get $j\leq j',$ whereas comparing the exponents of $y$ gives
$(m-1)(t-\ell)-j \leq (m-1)(t-\ell)-j',$ and therefore $j\geq j'.$ Consequently, $j=j'.$ Therefore,
$v_{\ell,j}=v_{\ell',j'},$ which shows that no two distinct monomials in the displayed set divide
each other. Accordingly, we deduce that 
$\mathcal{G}(I^t)=\{v_0,v_1,\ldots,v_{s_t}\},$ where $s_t=\frac{t(t+3)}{2}$.  
For any $0\leq \ell\leq t$ and $0\leq j\leq t-\ell$, we can write
$$v_{\ell,j}=x^{m\ell+j}u_{s_{\ell}+j},\text{ where }u_{s_{\ell}+j}=y^{(m-1)(t-\ell)-j}z^{t-\ell}.$$
It is straightforward to check that, for  any $0\leq \ell\leq t-1$, we have  
\begin{equation}\label{eq2}
u_{s_{\ell}+t-\ell}\mid u_{s_{\ell}+t-\ell-1}\mid \cdots \mid u_{s_{\ell}},
\end{equation}
and, in particular,  the divisions are strict. Now, assume that $t\leq m-1$. Since $(m-1)(t-\ell-1)\leq (m-2)(t-\ell)$, 
for any $0\leq \ell\leq t-2$, we can conclude that 
\begin{equation}\label{eq3}
u_{s_{\ell+1}} = y^{(m-1)(t-\ell-1)}z^{t-\ell-1} \mid u_{s_{\ell}+t-\ell} = y^{(m-2)(t-\ell)}z^{t-\ell},
\end{equation}
and the division is strict. In light of  \eqref{eq2} and \eqref{eq3}, we deduce that the following divisions are strict  
$$u_{s_t-1}\mid u_{s_t-2} \mid \cdots \mid u_0.$$
Accordingly,  it follows from  Corollary \ref{p1}  that $\depth(R/I^t)=1$, that is,  $\mathfrak m\notin \Ass(R/I^t)$.
Consequently, $\Ass(R/I^t)=\Min(R/I)=\{(x,y),(x,z)\}.$

 Now, assume that  $t\geq m$. We consider the monomial $h:=x^m y^{mt-m-t} z^{t-1}$.
We recall the fact that $I^t$ is generated in degree $mt$.
Note that $\deg(h)=mt-1<mt$ and therefore $h\notin I^t$. On the other hand, we have the following observations
\begin{itemize}
\item $hx = x^{m+1} y^{mt-m-t} z^{t-1} = x^m \cdot (xy^{m-2}z)\cdot (y^{m-1}z)^{t-2}\in I^t$.
\item $hy = x^m y^{mt-m-t+1} z^{t-1} = x^m\cdot (y^{m-1}z)^{t-1} \in I^t$.
\item $hz = x^m y^{mt-m-t} z^t = (xy^{m-2}z)^m\cdot (y^{m-1}z)^{t-m}\in I^t$.
\end{itemize}
Therefore $(I^t:h)=\mathfrak m$, and thus $\mathfrak m\in\Ass(R/I^t)$, as required.
}
\end{example}


\bigskip
To complete the proof of Corollary  \ref{p2}, we need the following auxiliary proposition. In general, it should be noted that for a 
monomial ideal $I$, it  is possible  $f \in \mathcal{G}(I)$ but $f^s \notin \mathcal{G}(I^s)$ for some $s>1$. For example, assume 
\[
I=(x^5,\; y^5,\; x^4y,\; xy^4,\; x^3y^2z)\subset K[x,y,z].
\]
Then  $f=x^3y^2z \in \mathcal{G}(I)$ while  $f^3=x^9y^6z^3 \notin \mathcal{G}(I^3).$

\begin{proposition} \label{min.gens}
Let $I\subset R=K[x_1, \ldots, x_n]$ be a monomial ideal such that $|\mathcal{G}(I)|\geq 2.$ Then, for every integer $t\geq 1$,
$|\mathcal{G}(I^t)|\geq 2.$
\end{proposition}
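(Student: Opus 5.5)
We need to prove: if $|\mathcal{G}(I)|\geq 2$, then $|\mathcal{G}(I^t)|\geq 2$ for every $t\geq 1$. Equivalently, $I^t$ is not principal. The plan is to argue by contradiction using a pure-power argument on a single variable.

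Suppose $I^t=(w)$ for some monomial $w$. Let $g=\gcd$ of the elements of $\mathcal{G}(I)$, and write $I=gJ$, where the generators of $J$ have gcd $1$. Then $I^t=g^tJ^t$, so $J^t$ is principal as well, say $J^t=(w')$.

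Since $J$ has at least two minimal generators, $J\neq R$. Hence $1\notin J$, so $w'\neq 1$ and some variable $x_i$ divides $w'$. Because the gcd of $\mathcal{G}(J)$ is $1$, there is a generator $u\in\mathcal{G}(J)$ with $x_i\nmid u$. Then $u^t\in J^t=(w')$, so $w'\mid u^t$. But $x_i\mid w'$ while $x_i\nmid u^t$, which is a contradiction.

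An alternative route avoids passing to $J$. Pick any variable $x_i$ with $a=\min_{u\in\mathcal{G}(I)}\deg_{x_i}u<\max_{u}\deg_{x_i}u$. Such a variable exists: otherwise every variable would appear with the same exponent in all generators, so all generators would be equal, contradicting $|\mathcal{G}(I)|\geq 2$. Now take $u$ attaining the minimum and $v$ attaining the maximum in that variable. If $I^t=(w)$, then $w$ divides $u^t$, so $\deg_{x_i}w\leq ta$. On the other hand, every product of $t$ generators has $x_i$-degree at least $ta$, so $w$, being such a product up to multiples, has $x_i$-degree exactly $ta$ in every coordinate direction of this form. Carrying this out in each variable would force $w=g^t$ with $g\in I$, which I expect to be the delicate bookkeeping. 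The gcd-reduction argument above avoids this, so it is the one I would write up.

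The only point needing care is that $J$ is proper, and hence $w'\neq 1$. This follows because a monomial ideal containing $1$ has $\mathcal{G}=\{1\}$, of size one.
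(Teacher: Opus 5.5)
Your gcd-reduction argument is correct and is essentially the paper's proof in normalized form. The paper keeps $g$, shows $w=g^t$ from $g^t\mid w$ and $w\mid\gcd(u_1^t,\ldots,u_m^t)=g^t$, and then uses $g^t\in I^t$ to force $g\in\mathcal{G}(I)$, hence $\mathcal{G}(I)=\{g\}$; you factor out $g$ first, so the same contradiction appears as $1\in J^t\subseteq J$, i.e.\ $g\in I$. Incidentally, the ``alternative route'' you set aside is the paper's route, and its bookkeeping amounts to just those two divisibilities plus the observation that each of the $t$ factors $u_{i_j}=gh_j$ of $g^t$ must have $h_j=1$.
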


\begin{proof}
We argue by contradiction. Suppose that for some $t\geq 1$,  $|\mathcal{G}(I^t)|=1.$ 
 Then $I^t$ is a principal monomial ideal, say $I^t=(w)$ for some monomial $w$.
  Let $\mathcal{G}(I)=\{u_1,\ldots,u_m\},$ where $m\geq 2$. Define $g=\gcd(u_1,\ldots,u_m),$ 
the greatest common divisor of the minimal monomial generators of $I$. 
Since each $u_i$ is divisible by $g$, every product of $t$ generators of $I$ is divisible by $g^t$. Hence every monomial in $I^t$ is divisible by $g^t$. In particular,  $g^t\mid w,$ because $w\in I^t$. On the other hand, for every $i=1,\ldots,m$, we have $u_i^t\in I^t=(w),$ so $w\mid u_i^t .$ 
Therefore, $w\mid \gcd(u_1^t,\ldots,u_m^t).$ Since the gcd of powers of monomials satisfies
 $\gcd(u_1^t,\ldots,u_m^t)=g^t,$ we obtain $w\mid g^t.$  Combining the two divisibilities gives
 $w=g^t.$  Therefore, $I^t=(g^t).$  Now, by virtue of  $g^t\in I^t$, there exist generators $u_{i_1},\ldots,u_{i_t}\in\mathcal{G}(I)$ 
 such that $g^t=u_{i_1}u_{i_2}\cdots u_{i_t}.$ Due to  $g$ divides every generator of $I$, we can write
 $u_{i_j}=gh_j$ for some monomial $h_j$ and for every $j=1,\ldots,t$. Hence, $g^t=(gh_1)(gh_2)\cdots(gh_t),$ which gives that 
 $g^t=g^t(h_1h_2\cdots h_t).$ Cancelling $g^t$ yields that $h_1h_2\cdots h_t=1.$ Because the $h_j$ are monomials, this implies that 
 $h_1=\cdots=h_t=1.$ Thus,  we get $u_{i_1}=\cdots=u_{i_t}=g.$ Consequently,  $g$ is one of the minimal generators of $I$. Moreover, since $g$ divides every generator $u_i$ of $I$, the minimality of the generating set forces $\mathcal{G}(I)=\{g\}.$ Hence, $|\mathcal{G}(I)|=1,$
 which contradicts the assumption that $|\mathcal{G}(I)|\geq2.$ Therefore, our assumption was false, and we conclude that
 $|\mathcal{G}(I^t)|\geq2$ for every $t\geq1$.
\end{proof}


\begin{corollary}\label{K[x,y].2}
Let $I$ be a monomial ideal in $R=K[x,y]$. Then 
\[
\mathrm{Ass}(R/I)=\mathrm{Ass}(R/I^s) \qquad \text{for all } s\geq 1.
\]
\end{corollary}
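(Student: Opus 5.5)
Write $I=(h)\,J$ with $h=\gcd$ of the minimal generators, i.e.\ factor out the greatest common divisor $h=x^{a}y^{b}$ of the elements of $\mathcal{G}(I)$, so that $J=(u_1/h,\ldots,u_m/h)$ has no common monomial factor. The case $I=0$ or $I=R$ is trivial, so assume $I$ is proper and nonzero. Then for each $s\geq 1$ we have $I^s=h^s J^s$, and the gcd of $\mathcal{G}(I^s)$ is exactly $h^s$: indeed $\mathcal{G}(I^s)=\{h^s w: w\in\mathcal{G}(J^s)\}$, and the gcd of $\mathcal{G}(J^s)$ is $1$, since the pure powers structure forces this (if a variable, say $x$, divided every generator of $J^s$, then it would divide every $v^s$ with $v\in\mathcal{G}(J)$, hence every $v$, contradicting $\gcd=1$). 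By Theorem \ref{5.2KHN},
\[
\mathrm{Ass}(R/I^s)=\mathrm{Ass}(R/J^s)\cup\{(x)\ \text{if } a>0\}\cup\{(y)\ \text{if } b>0\},
\]
and the extra principal primes depend only on $\mathrm{supp}(h^s)=\mathrm{supp}(h)$, hence are independent of $s$. So it suffices to show $\mathrm{Ass}(R/J^s)=\mathrm{Ass}(R/J)$ for all $s$.

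Now $J$ has trivial gcd. If $|\mathcal{G}(J)|=1$, then $J=R$ (its single generator equals the gcd, namely $1$), so $J^s=R$ and both sets are empty. Otherwise $|\mathcal{G}(J)|\geq 2$, and by Proposition \ref{min.gens} also $|\mathcal{G}(J^s)|\geq 2$; Proposition \ref{K[x,y].1} then gives $\mathfrak m\in\mathrm{Ass}(R/J^s)$ for every $s$. It remains to see that no other prime occurs. Since the gcd of $\mathcal{G}(J^s)$ is $1$, some generator of $J^s$ is not divisible by $x$, i.e.\ is a pure power of $y$, and likewise some generator is a pure power of $x$. Thus $J^s$ is $\mathfrak m$-primary, and $\mathrm{Ass}(R/J^s)=\{\mathfrak m\}$ for all $s$. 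This also follows directly because the monomial primes of $K[x,y]$ are $(x),(y),\mathfrak m$, and $(x)$ is associated to a monomial ideal only when $x$ divides all generators after removing the other components, which $y^c\in J^s$ prevents.

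The only step requiring care is the claim that $\gcd(\mathcal{G}(J^s))=1$, equivalently that $J$ contains pure powers of both $x$ and $y$ precisely when it has trivial gcd. In two variables this is immediate: a monomial not divisible by $x$ is a power of $y$. If $x^c,y^d\in J$ then $x^{cs},y^{ds}\in J^s$, so $J^s$ is $\mathfrak m$-primary. Combining these facts gives $\mathrm{Ass}(R/I^s)=\mathrm{Ass}(R/I)$ for all $s\geq 1$.
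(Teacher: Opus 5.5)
Your proof is correct, but its decisive step differs from the paper's. The paper splits on $m=|\mathcal{G}(I)|$. For $m=1$ the ideal is principal. For $m\geq 2$ it combines Proposition~\ref{min.gens} ($|\mathcal{G}(I^s)|\geq 2$) with Proposition~\ref{K[x,y].1} to get $\mathfrak m\in\mathrm{Ass}(R/I^s)$ for every $s$. It leaves implicit that the only other candidates, $(x)$ and $(y)$, are associated to $I^s$ exactly when $I^s\subseteq (x)$, resp.\ $I^s\subseteq (y)$, which does not depend on $s$.

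You instead strip off the gcd $h$ with Theorem~\ref{5.2KHN}. This handles the height-one primes explicitly via $\mathrm{supp}(h^s)=\mathrm{supp}(h)$. You then use the two-variable fact that a proper monomial ideal $J$ with trivial gcd contains pure powers $x^c$ and $y^d$, so $J^s$ is $\mathfrak m$-primary and $\mathrm{Ass}(R/J^s)=\{\mathfrak m\}$.

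This observation makes your appeals to Propositions~\ref{min.gens} and \ref{K[x,y].1} redundant. It also makes the computation $\gcd(\mathcal{G}(J^s))=1$ unnecessary, since $x^c,y^d\in J$ gives $x^{cs},y^{ds}\in J^s$ directly.

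The paper's route is shorter because it reuses the generator-count criterion it has just established. Yours is self-contained and accounts for every monomial prime explicitly. It also gives a sharper description: for every $s$, $\mathrm{Ass}(R/I^s)$ consists of the primes $(t)$ with $t\in\mathrm{supp}(h)$, together with $\mathfrak m$ exactly when $I$ is not principal.
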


\begin{proof}
Let $\mathcal{G}(I)=\{u_1,\ldots,u_m\}$. If $m=1$, then $I$ is principal, and hence
$\mathrm{Ass}(R/I)=\mathrm{Ass}(R/I^s)$ for all $s\geq 1$. Thus, assume that $m\geq 2$. 
Now, the claim follows by combining Proposition~\ref{K[x,y].1} and Proposition~\ref{min.gens}.
\end{proof}
 

\begin{corollary}\label{p2}
With the notation from Corollary \ref{p1}, if $|\mathcal{G}(I_0)|\geq 2$, then 
$$\depth(R/I^t)=0 \text{ for all }t\geq 1.$$
\end{corollary}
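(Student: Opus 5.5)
The plan is to write $I^t$ in the same form as $I$ in Corollary \ref{p1} and then apply Theorem \ref{t1}(2). We expand
$$I^t=\big((x^a)+x^{i_p}I_p+\cdots+x^{i_1}I_1+I_0\big)^t$$
and collect terms according to the power of $x$. The part of $I^t$ not divisible by $x$ (the degree-zero part in $x$) is exactly $I_0^t\subset K[y,z]$. More precisely, a monomial $u\in K[y,z]$ lies in $I^t$ if and only if $u\in I_0^t$. Indeed, every product of $t$ generators of $I$ that involves a generator from $x^a$ or from some $x^{i_j}I_j$ with $j\geq 1$ is divisible by $x$.

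The argument does not need the whole filtration. We only need a "bottom layer" ideal $L_0$ for $I^t$, and for that layer Theorem \ref{t1}(2) says it suffices that $\depth(K[y,z]/L_0)=0$.

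First, write $I^t$ as
$$(x^{at})+x^{e_r}J_r+\cdots+x^{e_1}J_1+J_0,$$
where:
\begin{itemize}
\item $J_0=I_0^t$;
\item the $e_j$ are the exponents $0<e_1<\cdots<e_r<at$ of $x$ occurring in $\mathcal{G}(I^t)$, apart from $at$ itself;
\item $J_j\subset K[y,z]$ is generated by the $(y,z)$-parts of the minimal generators of $I^t$ with $x$-exponent $e_j$.
\end{itemize}
By construction, $\mathcal{G}(I^t)$ is then the disjoint union required in Theorem \ref{t1}.

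Some care is needed at this step. The generators with $x$-degree $0$ are exactly $\mathcal{G}(I_0^t)$, because they cannot be divided by anything involving $x$. Also, $x^{at}\in\mathcal{G}(I^t)$, since it is the only pure power of $x$ of minimal $x$-degree coming from the generators. Finally, the proof of Theorem \ref{t1}(2) only uses the shape of the decomposition, so the $J_j$ for $j\geq 1$ may be arbitrary monomial ideals. If some $J_j$ were not proper, the $x^{e_j}$ term would absorb $x^{at}$; to avoid this case we could instead invoke directly the argument from the proof of Theorem \ref{t1}(2) with $j=0$.

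Next, by Proposition \ref{min.gens} applied to $I_0$ in $K[y,z]$, $|\mathcal{G}(I_0)|\geq 2$ implies $|\mathcal{G}(I_0^t)|\geq 2$. Then Proposition \ref{K[x,y].1} gives $(y,z)\in\Ass(K[y,z]/I_0^t)$, i.e.\ $\depth(K[y,z]/L_0)=0$ with $L_0=J_0=I_0^t$. Theorem \ref{t1}(2) then yields $\depth(R/I^t)=0$.

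The main obstacle is verifying that $I^t$ genuinely fits the hypotheses of Theorem \ref{t1}. To sidestep it, the fallback is to repeat only the $j=0$ case of the argument in Theorem \ref{t1}(2) directly. Take a monomial $u\in K[y,z]$ with $(I_0^t:u)=(y,z)$. Then $u\notin I^t$, since $u$ is $x$-free and $u\notin I_0^t$. Moreover, $(I^t:u)$ contains $(y,z)$ and contains $x^{at}$, so it is contained in $(x,y,z)$ and its radical is $\mathfrak m$. Hence $\mathfrak m\in\Ass(R/I^t)$, because $\mathfrak m$ is minimal over $(I^t:u)$, which is an associated-prime annihilator of $u$ up to radical. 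Equivalently, some multiple $x^ku$ is an $I^t$-corner element. This concludes $\depth(R/I^t)=0$.
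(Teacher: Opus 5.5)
Your proposal is correct. Your fallback argument is essentially the paper's proof: pick $u\in K[y,z]$ with $(I_0^t:u)=(y,z)$, which exists by Proposition \ref{min.gens}. Then $u\notin I^t$ and $(x^{at},y,z)\subseteq (I^t:u)$, so $\sqrt{(I^t:u)}=\mathfrak m$. Your worry about the first route is unfounded: the only pure $x$-powers in $I^t$ are multiples of $x^{at}$, so every $J_j$ is automatically proper and $I^t$ genuinely satisfies the hypotheses of Theorem \ref{t1}.
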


\begin{proof}
Since $I=(x^a)+x^{i_p}I_p+\cdots+x^{i_1}I_1+I_0$, where $I_j\subset K[y,z]$, we get 
$$I^t = (x^{at})+x^{i_1}L_t+I_0^t,$$
where $L_t\subset R$ is a monomial ideal. Since $|\mathcal{G}(I_0)|\geq 2$,  Proposition \ref{min.gens} 
 implies  that $|\mathcal{G}(I_0^t)|\geq 2$.
 Hence, $\depth(K[y,z]/I_0^t)=0$. Let $u_t\in K[y,z]$ be a monomial such that $(I_0^t:u_t)=(y,z)$.
 It follows that 
$$(x^{at},y,z) \subset (I^t:u_t) = (x^{at})+x^{i_1}(L_t:u_t) + (y,z) \subset (x^{i_1},y,z).$$
Therefore, $\sqrt{(I^t:u_t)}=(x,y,z)$, and so $\depth(R/I^t)=0$.
\end{proof}


\begin{theorem}\label{t2}
Let $p\geq 1$ and $i_p>i_{p-1}>\cdots>i_1>i_0=0$ be some integers. Let $I_j\subset R'=K[x_1,\ldots,x_{n-1}]$, $0\leq j\leq p$, be some nonzero 
proper monomial ideals. We consider the monomial ideal 
$$I=x_n^{i_p}I_p + \cdots + x_n^{i_1}I_1 + I_0 \subset R=K[x_1,\ldots,x_n],$$ and we assume that
 $\mathcal{G}(I)=\mathcal{G}(x_n^{i_p}I_p) \cup \cdots \cup \mathcal{G}(x_n^{i_1}I_1) \cup \mathcal{G}(I_0).$ 
Let $L_j=I_0+\cdots+I_j$  for $0\leq j\leq p$.
Then the following statements hold:
\begin{enumerate}
\item[(1)] $\depth(R/I)=\depth(R'/L_p)+1$ or there exists $0\leq q<p$ such that
           \begin{enumerate}
           \item $\depth(R/I)=\depth(R'/L_q)$,
           \item $\depth(R/I)\leq \depth(R'/L_j)+1$  for $0\leq j\leq q-1$,
					 \item $\depth(R/I)\leq \depth(R'/L_j)$  for $q+1\leq j\leq p$.
					 \end{enumerate}
\item[(2)] If $\depth(R/I)=0$,  then there exists $0\leq q\leq p-1$ such that  $\depth(R'/L_q)=0$.
\item[(3)] $\depth(R/I)=0$ if and only if there exists $0\leq q\leq p-1$ and a monomial $u\in I_{q+1}\setminus I_q$ such that $\depth(R'/(L_q:u))=0$.
\end{enumerate}
\end{theorem}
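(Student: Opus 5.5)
The plan is to argue exactly as in the proof of Theorem \ref{t1}, with one change: since there is no generator $x_n^a$, the last colon ideal is $(I:x_n^{i_p})=L_pR$. Because $x_n$ is regular on $R/L_pR$, this gives $\depth(R/(I:x_n^{i_p}))=\depth(R'/L_p)+1$.

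\textbf{Part (1).} For $1\leq j\leq p$ I will use the same short exact sequences \eqref{seq}, whose cokernels are $R/(L_{j-1},x_n^{i_j-i_{j-1}})$, each of depth $\depth(R'/L_{j-1})$. By Lemma \ref{asia}, the depths $\depth(R/(I:x_n^{i_j}))$ are non-decreasing in $j$.
\begin{itemize}
\item If they are all equal to $\depth(R/I)$, then $\depth(R/I)=\depth(R'/L_p)+1$.
\item Otherwise, let $q<p$ be the first index where the depth jumps. Lemma \ref{dep} applied to \eqref{seq} with $j=q+1$ gives (a).
\item Applying Lemma \ref{dep} to \eqref{seq} for $j\leq q$ gives (b).
\item Applying it for $q+1\leq j\leq p-1$ gives the inequalities in (c) for those $j$.
\item The case $j=p$ of (c) follows from $\depth(R'/L_p)+1=\depth(R/(I:x_n^{i_p}))\geq \depth(R/I)+1$.
\end{itemize}

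\textbf{Part (2).} This is immediate from (1). The alternative $\depth(R/I)=\depth(R'/L_p)+1$ is at least $1$, so if $\depth(R/I)=0$ we must be in the second alternative, and (a) gives $q\leq p-1$ with $\depth(R'/L_q)=0$.

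\textbf{Part (3).} I will work directly with corner elements (Definition \ref{Def. Corner elements}). Write a monomial as $f=x_n^b w$ with $w\in R'$, and for $i_q\leq b$ compute
$$(I:f)=\sum_{j:\, i_j>b} x_n^{i_j-b}(I_j:w)+(L_{q'}:w),$$
where $q'$ is the largest index with $i_{q'}\leq b$.

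\emph{Forward direction.} Suppose $(I:f)=\mathfrak m$.
\begin{itemize}
\item If $b\geq i_p$, then $(I:f)=(L_p:w)R$. Containing $x_n$ would force $w\in L_p$, hence $f\in I$, which is impossible. So $b<i_p$.
\item Now $x_n\in(I:f)$, while $(L_{q'}:w)$ is proper because $f\notin I$. Every $x_n$-exponent $i_j-b$ with $j>q'+1$ is at least $2$. Hence $i_{q'+1}=b+1$ and $w\in I_{q'+1}$.
\item For $i<n$, the condition $x_i\in(I:f)$ forces $x_iw\in L_{q'}$, because every other summand of $(I:f)$ lies in $(x_n)$. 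Hence $(L_{q'}:w)=(x_1,\ldots,x_{n-1})$.
\end{itemize}
Thus $u=w$ and $q=q'$ satisfy the condition in (3). Note that $u\notin L_q\supseteq I_q$.

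\emph{Converse.} Suppose $u\in I_{q+1}\setminus I_q$ and $\depth(R'/(L_q:u))=0$. Pick $v\in R'$ with $((L_q:u):v)=(x_1,\ldots,x_{n-1})$, and set $f=x_n^{i_{q+1}-1}uv$.
\begin{itemize}
\item $x_nf\in x_n^{i_{q+1}}I_{q+1}\subseteq I$, since $uv\in I_{q+1}$.
\item $x_if\in I$ for $i<n$, since $x_iuv\in L_q$.
\item $f\notin I$: every summand with $j\geq q+1$ contributes only multiples of $x_n$ to $(I:f)$, and $uv\notin L_q$.
\end{itemize}
So $f$ is a corner element and $\depth(R/I)=0$.

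The main obstacle is this bookkeeping in (3): identifying the exact range of $b$ and checking that the colon ideal really equals $\mathfrak m$ rather than merely having radical $\mathfrak m$. The displayed formula for $(I:f)$, together with the minimal-generator hypothesis on $\mathcal G(I)$, should handle this.
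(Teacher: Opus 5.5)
Your proposal is correct and takes essentially the same route as the paper. Parts (1) and (2) match the paper's argument: the same exact sequences, $(I:x_n^{i_p})=L_pR$, and the separate treatment of $j=p$ in (c). Part (3) uses the same decomposition $f=x_n^b w$ of a corner element. The only differences are small improvements in (3): you justify $b<i_p$, which the paper asserts without comment, and for the converse you exhibit the corner element $x_n^{i_{q+1}-1}uv$ directly, rather than computing $(I:x_n^{i_{q+1}-1}u)=(x_n)+(L_q:u)$ and invoking Lemma \ref{asia}, which amounts to the same thing.
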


\begin{proof}
(1) The proof is similar to the proof of Theorem \ref{t1}. For $1\leq j\leq p$, we consider the short exact sequences
\begin{equation}\label{seq2}
0 \to \frac{R}{(I:x_n^{i_j})} \to \frac{R}{(I:x_n^{i_{j-1}})} \to \frac{R}{((I:x_n^{i_{j-1}}),x_n^{i_j-i_{j-1}})} \to 0.
\end{equation}
As in the proof of Theorem \ref{t1}, we have 
$$\depth(R/((I:x_n^{i_{j-1}}),x_n^{i_j-i_{j-1}})) = \depth(R'/L_{j-1}) \text{ for }1\leq j\leq p.$$
On the other hand, we have 
$$\depth(R/(I:x_n^{i_{p}}))=\depth(R/L_pR)=\depth(R'/L_p)+1.$$
If $\depth(R/I)=\depth(R/(I:x_n^{i_{p}}))$, then $\depth(R/I)=\depth(R'/L_p)+1$. Otherwise, using the same argument as in the
proof of Theorem \ref{t1}, it follows that there exists $q<p$ such that $\depth(R/I)=\depth(R'/L_q)$, 
$\depth(R/I)\leq \depth(R'/L_j)+1$  for $0\leq j\leq q-1$, and $\depth(R/I)\leq \depth(R'/L_j)$  for $q+1\leq j\leq p-1$.
Also, we deduce that  $$\depth(R'/L_p)=\depth(R/(I:x_n^{i_{p}}))-1 \geq \depth(R/I).$$

(2) This can be deduced from (1). Indeed, if $\depth(R/I)=0$, then the first alternative of $(1)$ cannot occur since
 $\depth(R'/L_p)+1\geq 1.$ Hence, the second alternative must hold. Therefore, there exists some $0\leq q<p$ such that
  $\depth(R/I)=\depth(R'/L_q)=0,$ which completes the proof.

(3) Since $\depth(R/I)=0$, it follows that there exists a monomial $w\in R$ such that $(I:w)=\mathfrak m=(x_1,\ldots,x_n)$.
We can write $w=x_n^du$, where $u\in R'$. Since $(I:w)=\mathfrak m$, this implies that  there exists some $0\leq q<p$ such that
$i_{q+1}>d\geq i_q$, and thus 
$$\mathfrak m = (I:w) = (x_n^{i_p-d})(I_p:u) + \cdots + (x_n^{i_{q+1}-d})(I_{q+1}:u) + (I_q:u) + \cdots + (I_0:u).$$
Since $x_n\in (x^{i_p-d})(I_p:u) + \cdots + (x_n^{i_{q+1}-d})(I_{q+1}:u)$ and $i_p>\cdots>i_{q+1}>d$, it follows
that $x_n \in (x_n^{i_{q+1}-d})(I_{q+1}:u)$ and, therefore, $d=i_{q+1}-1$ and $u\in I_{q+1}$. 
It follows that $\mathfrak m = (x_n)+(I_q:u) + \cdots + (I_0:u)$ and thus
$$(L_q:u)=(I_q:u) + \cdots + (I_0:u)=(x_1,\ldots,x_{n-1}).$$ 
Consequently, we get $u\notin L_q$ and $\depth(R'/(L_q:u))=0$, as required. 

Conversely, since $u\in I_{q+1}$, one obtain $(I:x_n^{i_{q+1}-1}u) = (x_n) + (L_q:u)$.
It follows that $\depth(R/(I:x_n^{i_{q+1}-1}u))=\depth(R'/(L_q:u))=0$ and, by virtue of Lemma \ref{asia}, we get $\depth(R/I)=0$, as required.
\end{proof}


\begin{corollary}\label{c2}
Let $p\geq 1$ and $i_p>i_{p-1}>\cdots>i_1>i_0=0$ be some integers. Let $I_j\subset K[y,z]$, $0\leq j\leq p$, be some nonzero 
proper monomial ideals. We consider the monomial ideal $$I=x^{i_p}I_p + \cdots + x^{i_1}I_1 + I_0 \subset R=K[x,y,z],$$
 and we assume that
$\mathcal{G}(I)=\mathcal{G}(x^{i_p}I_p) \cup \cdots \cup \mathcal{G}(x^{i_1}I_1) \cup \mathcal{G}(I_0).$ 
Then the following statements are  equivalent:
\begin{enumerate}
\item[(1)] $\depth(R/I)=0$.
\item[(2)] There exists $0\leq q<p$ and a monomial $u\in I_{q+1}$, $u\notin I_j$  for $0\leq j\leq q$, 
           such that $|\mathcal{G}((I_0:u)+\cdots+(I_q:u))|\geq 2$.
\end{enumerate}
\end{corollary}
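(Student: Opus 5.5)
This corollary is the case $n=3$ of Theorem~\ref{t2}(3), made explicit by replacing the depth-zero condition in $K[y,z]$ with a count of generators. We set $R'=K[y,z]$ and $L_q=I_0+\cdots+I_q$. By Theorem~\ref{t2}(3), $\depth(R/I)=0$ holds if and only if there exist $0\leq q\leq p-1$ and a monomial $u\in I_{q+1}\setminus I_q$ with $\depth(R'/(L_q:u))=0$.

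First, we reconcile the hypothesis on $u$. The condition in (2) is that $u\notin I_j$ for all $0\leq j\leq q$, i.e.\ $u\notin L_q$. In the forward direction, the proof of Theorem~\ref{t2}(3) already gives $u\notin L_q$, so $u\notin I_j$ for every $j\leq q$. In the converse direction the stronger hypothesis $u\notin L_q$ certainly implies $u\in I_{q+1}\setminus I_q$. So the membership conditions match. Moreover, since colon ideals commute with sums of monomial ideals, $(L_q:u)=(I_0:u)+\cdots+(I_q:u)$.

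Second, we translate $\depth(R'/(L_q:u))=0$. Because $u\notin L_q$, the ideal $J:=(L_q:u)$ is a proper monomial ideal of $K[y,z]$. It is nonzero because $L_q\neq 0$.
\begin{itemize}
\item If $|\mathcal{G}(J)|\geq 2$, Proposition~\ref{K[x,y].1} gives $(y,z)\in\Ass(R'/J)$, so $\depth(R'/J)=0$.
\item If $J$ is principal, Remark~\ref{rem1} with $n=2$ gives $\depth(R'/J)=1$.
\end{itemize}
Hence $\depth(R'/J)=0$ if and only if $|\mathcal{G}(J)|\geq 2$. Combining the two steps yields $(1)\Leftrightarrow(2)$.

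The only delicate point is checking that $J$ is proper and nonzero, so that Remark~\ref{rem1} applies. Properness is exactly where the condition $u\notin I_j$ for all $j\leq q$ is needed, rather than only $u\notin I_q$.
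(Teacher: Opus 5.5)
Your proof is correct and follows the paper's argument: apply Theorem~\ref{t2}(3) with $R'=K[y,z]$, then use that a nonzero proper monomial ideal $J\subset K[y,z]$ satisfies $\depth(R'/J)=0$ if and only if $|\mathcal{G}(J)|\geq 2$. You also make explicit two points the paper leaves implicit: the condition $u\notin I_j$ for all $j\leq q$ is equivalent to $u\notin L_q$, which makes $(L_q:u)$ proper, and $(L_q:u)=(I_0:u)+\cdots+(I_q:u)$.
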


\begin{proof}
We denote $L_j=I_0+\cdots+I_j$  for $0\leq j\leq p$. According to Theorem \ref{t2}(3), $\depth(R/I)=0$ if and only if 
there exists $0\leq q\leq p-1$ and a monomial $u\in I_{q+1}\setminus I_q$ such that $\depth(R'/(L_q:u))=0$. On the other hand, since $(L_q:u)$
is a nonzero proper monomial ideal of $K[y,z]$, it follows that $\depth(R'/(L_q:u))=0$ if and only if $|\mathcal{G}((L_q:u))|\geq 2$,
which completes the proof.  
\end{proof}


We now present the fourth  criterion of our approach, which is provided by the following corollary.

\begin{corollary}\label{c3}
Let $p\geq 1$ and $i_p>i_{p-1}>\cdots>i_1>i_0=0$ be some integers.
Let $u_j\in K[y,z]$, $0\leq j\leq p-1$, be some monomials such that
\[
u_{p-1}\mid u_{p-2}\mid\cdots\mid u_1\mid u_0
\]
and all the divisions are strict. Let $I_p\subset K[y,z]$ be a proper
nonzero monomial ideal such that $u_{p-1}\notin I_p$. We consider the ideal
\[
I=x^{i_p}I_p+(x^{i_{p-1}}u_{p-1},\ldots,x^{i_1}u_1,u_0)
\subset R=K[x,y,z].
\]
We assume that \[
\mathcal{G}(I)
=
\mathcal{G}(x^{i_p}I_p)\cup\cdots\cup
\mathcal{G}(x^{i_1}I_1)\cup\mathcal{G}(I_0).
\]
Then $\depth(R/I)=1$.
\end{corollary}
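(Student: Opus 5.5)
The plan is to apply Corollary \ref{c2} with $I_j=(u_j)$ for $0\leq j\leq p-1$ and $I_p$ as given. This reduces the claim to two facts: $\depth(R/I)\neq 0$, and $\depth(R/I)\leq 1$.

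The upper bound is immediate. $I$ is not principal, since it contains both $u_0$ and $x^{i_1}u_1$ as distinct minimal generators (or elements of $\mathcal{G}(x^{i_p}I_p)$ if $p=1$). Hence Remark \ref{rem1} gives $\depth(R/I)\leq 1$.

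For the lower bound, suppose $\depth(R/I)=0$. By Corollary \ref{c2} there exist $0\leq q<p$ and a monomial $u\in I_{q+1}$ with $u\notin I_j$ for $0\leq j\leq q$, such that $|\mathcal{G}((I_0:u)+\cdots+(I_q:u))|\geq 2$. Because of the divisibility chain $u_q\mid u_{q-1}\mid\cdots\mid u_0$, we have
\[
(I_0:u)+\cdots+(I_q:u)=(L_q:u)=((u_q):u),
\]
where $L_q=I_0+\cdots+I_q=(u_q)$. The colon of a principal monomial ideal by a monomial is principal; explicitly $((u_q):u)=(u_q/\gcd(u_q,u))$. So $|\mathcal{G}((L_q:u))|=1$, contradicting the condition from Corollary \ref{c2}. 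Therefore $\depth(R/I)\geq 1$, and together with the upper bound $\depth(R/I)=1$.

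The main point to check is that $L_q$ is principal for every $q\leq p-1$. This holds because only the indices $0,\ldots,p-1$ occur in Corollary \ref{c2}(2), so $I_p$ never enters any $L_q$. The hypothesis $u_{p-1}\notin I_p$ is only needed so that the generator assumption on $\mathcal{G}(I)$ is consistent; it plays no role in the depth argument. One also has to confirm that the hypotheses of Corollary \ref{c2} are met. It requires $p\geq 1$, nonzero proper ideals $I_j$, and the stated form of $\mathcal{G}(I)$; all of these are assumptions of the present corollary, since each $u_j$ is a nonunit (for $j<p-1$ because the division $u_{j+1}\mid u_j$ is strict, and $u_{p-1}$ by the properness of $I_{p-1}=(u_{p-1})$ needed to apply Corollary \ref{c2}).
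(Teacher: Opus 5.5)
Your proof is correct and is essentially the paper's argument: both rest on $L_q=I_0+\cdots+I_q=(u_q)$ being principal for every $q\le p-1$, and both use Remark \ref{rem1} for $\depth(R/I)\le 1$. The only difference is that you go through Corollary \ref{c2} and the colon $((u_q):u)$, whereas the paper applies Theorem \ref{t2}(2) directly (a principal $L_q$ gives $\depth(R'/L_q)=1\neq 0$), which avoids the colon step. Separately, your aside that $u_{p-1}\notin I_p$ is needed for the $\mathcal{G}(I)$ hypothesis to be consistent is not accurate: for example, $I=(xy,y^2)$ with $u_0=y^2$ and $I_1=(y)$ satisfies that hypothesis although $u_0\in I_1$. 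Nothing in your proof depends on this aside.
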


\begin{proof}

For $0\leq j\leq p-1$, let $L_j=I_0+\cdots+I_j.$ Since
 $u_j\mid u_{j-1}\mid\cdots\mid u_0,$ we have
\[
I_0\subseteq I_j,\quad I_1\subseteq I_j,\quad\ldots,\quad I_{j-1}\subseteq I_j.
\]
Consequently, $L_j=I_j=(u_j).$ Thus every $L_j$ is a principal monomial ideal of
 $R'=K[y,z].$ By Remark \ref{rem1}, we obtain $\depth(R'/L_j)=1>0$ for all $0\leq j\leq p-1$. 
Now, by Theorem \ref{t2}(2), if $\depth(R/I)=0$, then there would exist
some $0\leq q<p$ such that $\depth(R'/L_q)=0.$ This contradicts the above equality. Therefore,
 $\depth(R/I)>0.$ It remains to obtain an upper bound for the depth. Since
$u_{p-1}\notin I_p$, the generators coming from $x^{i_p}I_p$ cannot all
be multiples of $u_{p-1}$. Hence $I$ has at least two minimal monomial
generators and therefore $I$ is not principal. Applying Remark \ref{rem1}
in the ring $R=K[x,y,z]$, we get $\depth(R/I)\leq 3-2=1.$ 
Combining this inequality with $\depth(R/I)>0$, we conclude that 
$\depth(R/I)=1.$ 
\end{proof}


\begin{remark}
It is natural to ask, if the converse of (2) from Theorem \ref{t2} holds, that is, if $\depth(R'/L_q)=0$  for some $q<p$, then
$\depth(R/I)=0$. The answer is no. Let $I=(xy,yz,z^2)\subset R=K[x,y,z]$. With the notations from Theorem \ref{t2} (and Corollary \ref{c2}),
we have $I=xI_1+I_0$, where $I_1=(y)$ and $I_0=(yz,z^2)$. Moreover, it is easy to see that 
$$\mathcal{G}(I)=\{xy,yz,z^2\}=\mathcal{G}(xI_1)\cup\mathcal{G}(I_0).$$
Let $L_0=I_0=(yz,z^2)$ and $L_1=I_0+I_1=(y,z^2)$. We have $\depth(R'/L_0)=0$
and $\depth(R'/L_1)=0$, where $R'=K[y,z]$.  However, $\depth(R/I)=1$. 
Here $p=1$, so the only possible value is $q=0.$ Therefore, according to Theorem \ref{t2}(3), we only need to consider
 $(L_0:u)$ for monomials $u\in I_1=(y).$ Let $u\in I_1\setminus L_0$ be a monomial. Then $u=y^d$  for some $d\geq 1$,
and therefore $\depth(R'/(L_0:u))=\depth(R'/(z))=1$. Hence, there is no monomial $u\in I_1\setminus L_0$ such that
 $\depth(R'/(L_0:u))=0$. 
\end{remark}


\begin{corollary} \label{C4}
Let $p\geq 1$ and $i_p>i_{p-1}>\cdots>i_1>i_0=0$ be some integers. Let $u_j\in K[y,z]$, $0\leq j\leq p-1$, be some monomials such that
there exists $0\leq q\leq p-1$ with $u_q\mid u_{q-1} \mid  \cdots \mid u_1 \mid u_0$, the divisions are strict, and,
if $q\leq p-2$, then
\begin{enumerate}
\item[(i)]   $u_j\nmid u_k$ for all $p-1\geq k>j\geq q$,
\item[(ii)]  $((u_{q},u_{q+1},\ldots,u_{j-1}):u_j)$ is principal  for all $q+1\leq j\leq p-1$,
\item[(iii)] $((u_{q},u_{q+1},\ldots, u_{p-1}):u)$ is principal  for any $u\in \mathcal G(I_p)$.
\end{enumerate}
Let $I_p\subset K[y,z]$ be a proper nonzero monomial ideal such that $u_q,\ldots,u_{p-1}\notin I_p$. We consider the ideal
$$I=x^{i_p}I_p+(x^{i_{p-1}}u_{p-1},\ldots,x^{i_1}u_1,u_0)\subset R=K[x,y,z].$$
We assume that \[
\mathcal{G}(I)
=
\mathcal{G}(x^{i_p}I_p)\cup\cdots\cup
\mathcal{G}(x^{i_1}I_1)\cup\mathcal{G}(I_0).
\]
Then $\depth(R/I)=1$.
\end{corollary}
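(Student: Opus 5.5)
We use Theorem \ref{t2}(3), as in Corollary \ref{c2}, and then bound the depth from above by Remark \ref{rem1}, exactly as in the proof of Corollary \ref{c3}. Throughout, $I_j=(u_j)$ for $0\leq j\leq p-1$ and $L_j=I_0+\cdots+I_j$. By Theorem \ref{t2}(3), $\depth(R/I)=0$ holds if and only if there exist some $0\leq r\leq p-1$ and a monomial $u\in I_{r+1}\setminus L_r$ with $\depth(R'/(L_r:u))=0$. Since $(L_r:u)$ is a nonzero proper monomial ideal of $R'=K[y,z]$, this is equivalent to $(L_r:u)$ being non-principal. So the task is to show that $(L_r:u)$ is principal for every $0\leq r\leq p-1$ and every monomial $u\in I_{r+1}\setminus L_r$.

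First, the strict divisibility chain $u_q\mid\cdots\mid u_0$ gives $L_j=(u_j)$ for $j\leq q$, exactly as in Corollary \ref{c3}. For $j>q$ we get $L_j=(u_q,u_{q+1},\ldots,u_j)$. Hypothesis (i) says no later $u_k$ is divisible by an earlier $u_j$ among indices $\geq q$, so these are distinct minimal generators.

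For $r\leq q-1$, we have $L_r=(u_r)$ principal, hence $(L_r:u)=(u_r/\gcd(u_r,u))$ is principal. For $q\leq r\leq p-2$, the monomial $u$ lies in $I_{r+1}=(u_{r+1})$, so $u=u_{r+1}w$. Then $(L_r:u)=((L_r:u_{r+1}):w)$. By (ii) with $j=r+1$, the ideal $(L_r:u_{r+1})$ is principal, so its colon by $w$ is principal as well. For $r=p-1$, the monomial $u$ lies in $I_p$, so $u=vw$ with $v\in\mathcal G(I_p)$. By (iii), $(L_{p-1}:v)$ is principal, hence so is $(L_{p-1}:u)$.

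The step to verify carefully is that colon ideals of a principal monomial ideal by a monomial remain principal: $((g):w)=(g/\gcd(g,w))$. The other step to check is that $L_r=(u_q,\ldots,u_r)$ when $r\geq q$, because the $u_j$ with $j<q$ are multiples of $u_q$. The case $q=p-1$, where conditions (i)--(ii) are vacuous, needs separate handling. There $L_{p-1}=(u_{p-1})$ is principal, so the argument of the case $r\leq q-1$ applies directly.

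These cases show that no $(L_r:u)$ is non-principal, so Theorem \ref{t2}(3) gives $\depth(R/I)>0$. For the upper bound, since $u_{p-1}\notin I_p$ (using $u_q,\ldots,u_{p-1}\notin I_p$), the ideal $I$ has at least two minimal generators, namely $u_0$ and some generator of $x^{i_p}I_p$ not divisible by $u_0$. Hence $I$ is not principal, and Remark \ref{rem1} gives $\depth(R/I)\leq1$. Therefore $\depth(R/I)=1$.
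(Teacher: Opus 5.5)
Your proposal is correct and follows essentially the same route as the paper. You reduce via Theorem~\ref{t2}(3) (which the paper invokes through Corollary~\ref{c2}) to showing that every colon $(L_r:u)$ with $u\in I_{r+1}\setminus L_r$ is principal, using that $L_r$ is principal for $r\le q$, hypothesis (ii) for the middle range and (iii) for $r=p-1$, and you then get $\depth(R/I)\le 1$ from Remark~\ref{rem1}.

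One small correction: the non-principality of $I$ does not follow from $u_{p-1}\notin I_p$. It follows directly from the assumption on $\mathcal{G}(I)$, which makes $u_0$ and every element of the nonempty set $\mathcal{G}(x^{i_p}I_p)$ (all divisible by $x$) distinct minimal generators of $I$.
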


\begin{proof}
For $0\leq j\leq p-1$, let $L_j=I_0+\cdots+I_j$. Since $u_q\mid u_{q-1} \mid  \cdots \mid u_1 \mid u_0$, we get
$$I_0\subset I_1 \subset \cdots \subset I_q,$$
and, consequently, $L_j=I_j$ for $0\leq j\leq q$. By Remark \ref{rem1}, we obtain $\depth(R'/L_j)=1>0$ for all $0\leq j\leq q$.

Now, assume that $q\leq p-2$. Note that, for any $q+1\leq j\leq p-1$, we have the following equalities 
$$L_j=I_0+\cdots+I_j=I_q+\cdots+I_j=(u_q,u_{q+1},\ldots,u_j).$$ 
If $L_j$ is not principal, then, according to Remark \ref{rem1}, $\depth(R'/L_j)=0$. 
Since $L_q = I_q = (u_q)$  is  principal, we can deduce that $(L_q:u)$ is principal as well. 
If $q+2\leq j\leq p-1$,   then, for any monomial $u\in I_{j}\setminus L_{j-1}$, we claim that
$(L_{j-1}:u)$ is principal. Indeed, since $u\in I_{j}$, it follows that $u=u_{j}u'$ for some monomial $u'$.
Due to  (ii), the ideal $(L_{j-1}:u_{j}) = ((u_{q},u_{q+1},\ldots,u_{j-1}):u_j)$ is principal, 
and, therefore, $(L_{j-1}:u)=((L_{j-1}:u_j):u')$ is principal.
Similarly, according to (iii), for any monomial $u\in I_p\setminus L_{p-1}$, we have  $(L_{p-1}:u)$ is principal.
To apply Corollary \ref{c2}, we need to verify that the condition
corresponding to statement {\rm (2)} of Corollary \ref{c2} does not occur.
 The proof above shows that, for $q+2\leq j\leq p-1,$ we have
\[
(L_{j-1}:u)\ \text{is principal for every }u\in I_{j}\setminus L_{j-1},
\]
and  for \(j=p\), $(L_{p-1}:u)$ is principal for every $u\in I_p\setminus L_{p-1}.$ 
Therefore, there is no integer \(q\) and no monomial \(u\) satisfying the
condition in statement {\rm (2)} of Corollary \ref{c2}. Hence, $\depth(R/I)\neq 0.$ 
Since $$\mathcal{G}(I)=\mathcal{G}(x^{i_p}I_p)\cup
\mathcal{G}(x^{i_{p-1}}I_{p-1})\cup\cdots\cup\mathcal{G}(I_0),$$
and \(I_p\neq 0\), the ideal \(I\) has at least two minimal generators.
Hence \(I\) is not principal. By Remark \ref{rem1},
 $\depth(R/I)\leq 1.$ Combining this with \(\depth(R/I)\neq0\), we conclude that
$\depth(R/I)=1.$
\end{proof}



\section{Examples and applications}\label{Section4}

 In this section, we continue to investigate the presence of the maximal ideal $(x,y,z)$ in the set of associated primes of powers of monomial ideals in $K[x,y,z]$. Through a series of lemmas and five illustrative examples, we analyze the behavior of the maximal ideal for different powers of the ideal, determining precisely when it belongs to or disappears from the sets of associated primes. Each case corresponds to a distinct configuration of the minimal primary decomposition and is supported by explicit constructions and detailed proofs. To accomplish this, we commence with the following general result.

\begin{lemma} \label{Lem.Ass}
Let $I\subset R=K[x,y,z]$ be a monomial ideal, $\mathfrak{m}=(x,y,z)$ the maximal ideal, and $s\geq 1$. Then
  $\mathrm{Ass}(R/I^s)\setminus \{\mathfrak{m}\}=\mathrm{Ass}(R/I^{s+1})\setminus \{\mathfrak{m}\}$, and hence, we always have 
  one of the following cases:
\begin{itemize}
\item[(i)]  $\mathfrak{m} \in \mathrm{Ass}(R/I^s)$ and  $\mathrm{Ass}(R/I^s)=\mathrm{Ass}(R/I^{s+1})$;
\item[(ii)]   $\mathfrak{m} \notin \mathrm{Ass}(R/I^s)$ and  $\mathrm{Ass}(R/I^s)=\mathrm{Ass}(R/I^{s+1})$;
\item[(iii)]  $\mathfrak{m} \in \mathrm{Ass}(R/I^s)$ and  $\mathrm{Ass}(R/I^{s+1})=\mathrm{Ass}(R/I^s)\setminus \{\mathfrak{m}\}$;
\item[(iv)]  $\mathfrak{m} \notin \mathrm{Ass}(R/I^s)$ and  $\mathrm{Ass}(R/I^{s+1})=\mathrm{Ass}(R/I^s) \cup \{\mathfrak{m}\}$.
 \end{itemize}
 
 In particular, we have    $\mathrm{Ass}(R/I)\setminus \{\mathfrak{m}\}=\mathrm{Ass}(R/I^{s})\setminus \{\mathfrak{m}\}$
 and  $\mathrm{Ass}(R/I^s) \subseteq \mathrm{Ass}(R/I) \cup \{\mathfrak{m}\}$. 
\end{lemma}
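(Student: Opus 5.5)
The plan is to show that for every monomial prime $\mathfrak{p}\neq\mathfrak{m}$ in $K[x,y,z]$, membership of $\mathfrak{p}$ in $\mathrm{Ass}(R/I^s)$ does not depend on $s$. The four cases (i)--(iv) and the two ``in particular'' statements are then pure bookkeeping. Associated primes of monomial ideals are monomial primes, so any $\mathfrak{p}\neq \mathfrak{m}$ in $\mathrm{Ass}(R/I^s)$ is generated by a proper subset of $\{x,y,z\}$, hence by at most two variables. I will use localization: $\mathfrak{p}\in \mathrm{Ass}(R/I^s)$ if and only if $\mathfrak{p}R_{\mathfrak p}\in \mathrm{Ass}(R_{\mathfrak p}/I^sR_{\mathfrak p})$. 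For monomial ideals this is the same as saying that $\mathfrak p$ is associated to $I(\mathfrak p)^s$, where $I(\mathfrak p)$ is obtained from $I$ by setting the variables outside $\mathfrak p$ equal to $1$.

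Concretely, say $\mathfrak p=(x,y)$. Let $J\subset K[x,y]$ be the image of $I$ under $z\mapsto 1$. Then $I^s$ maps to $J^s$. Moreover, $\mathfrak p\in\mathrm{Ass}(R/I^s)$ if and only if $(x,y)\in \mathrm{Ass}_{K[x,y]}(K[x,y]/J^s)$. The reason is that $z$ is a unit after localizing at $\mathfrak p$, and $R_{\mathfrak p}$ is a localization of $K[x,y][z,z^{-1}]$, which is flat over $K[x,y]$. Now Corollary~\ref{K[x,y].2} gives $\mathrm{Ass}(K[x,y]/J^s)=\mathrm{Ass}(K[x,y]/J)$ for all $s$, so membership of $(x,y)$ is independent of $s$. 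The cases $\mathfrak p=(x,z),(y,z)$ are symmetric.

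The one-variable primes, such as $(x)$, work the same way: $J\subset K[x]$ is principal, and $(x)$ is associated to $J^s$ exactly when $J\neq K[x]$, which is again independent of $s$. The zero ideal is never associated to a nonzero ideal. If $I=0$ or $I=R$, the statement is trivial. So $\mathrm{Ass}(R/I^s)\setminus\{\mathfrak m\}=\mathrm{Ass}(R/I^{s+1})\setminus\{\mathfrak m\}$. The cases (i)--(iv) follow by recording whether $\mathfrak m$ lies in $\mathrm{Ass}(R/I^s)$ and whether it lies in $\mathrm{Ass}(R/I^{s+1})$. Induction on $s$ gives $\mathrm{Ass}(R/I)\setminus\{\mathfrak m\}=\mathrm{Ass}(R/I^s)\setminus\{\mathfrak m\}$, and hence $\mathrm{Ass}(R/I^s)\subseteq \mathrm{Ass}(R/I)\cup\{\mathfrak m\}$.

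The main obstacle is making the localization step rigorous. I need $\mathrm{Ass}$ to commute with localization, which is standard. I also need that setting $z=1$ correctly computes $I^sR_{\mathfrak p}\cap K[x,y]$, and that associated primes transfer along the flat extension $K[x,y]\to K[x,y,z]_{\mathfrak p}$, where the prime $(x,y)$ extends to the maximal ideal. I will prove this directly with monomials instead. If $(J^s:f)=(x,y)$ for a monomial $f\in K[x,y]$, then $(I^s:fz^N)=(x,y)$ for $N$ large, since $I^s$ and $J^s$ agree after multiplying by a high power of $z$. Conversely, if $(I^s:g)=(x,y)$ with $g=fz^c$, then $(J^s:f)=(x,y)$, because $z^k g\notin I^s$ for every $k$.
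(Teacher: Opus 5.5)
Your proof is correct, but it takes a different route from the paper.

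The paper's route is global. It writes $I=hJ$ with $h$ the gcd of the minimal generators and invokes Theorem~\ref{5.2KHN} to get $\mathrm{Ass}(R/I^s)=\mathrm{Ass}(R/J^s)\cup\{(t): t\in\mathrm{supp}(h)\}$. It then argues by height. If $J$ is primary, all its powers are primary to the same prime. Otherwise no variable divides every generator of $J$, so $\mathrm{Min}(J)\subseteq\{(x,y),(x,z),(y,z)\}$. The only monomial prime strictly containing one of these is $\mathfrak m$, so $\mathrm{Min}(J)\subseteq\mathrm{Ass}(R/J^\ell)\subseteq\mathrm{Min}(J)\cup\{\mathfrak m\}$ for every $\ell$.

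You instead localize at each non-maximal monomial prime, setting the remaining variables equal to $1$. This reduces the question to the one- and two-variable cases, where Corollary~\ref{K[x,y].2} (i.e.\ Propositions~\ref{K[x,y].1} and \ref{min.gens}) gives independence of $s$.

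What each route buys: the paper's argument gives an explicit description of the stable part, namely $\mathrm{Ass}(R/I^s)\setminus\{\mathfrak m\}=\bigl(\mathrm{Min}(J)\cup\{(t):t\in\mathrm{supp}(h)\}\bigr)\setminus\{\mathfrak m\}$ for all $s$. Yours avoids Theorem~\ref{5.2KHN} and the height bookkeeping. It rests on a local principle that holds in any number of variables: a monomial prime $\mathfrak p$ is associated to $I^s$ if and only if the maximal ideal of the polynomial ring in the variables of $\mathfrak p$ is associated to $I(\mathfrak p)^s$.

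Two small points to tighten when you write it up:
\begin{itemize}
\item State explicitly that the witness $g$ with $(I^s:g)=\mathfrak p$ can be chosen to be a monomial.
\item Replace ``$I^s$ and $J^s$ agree after multiplying by a high power of $z$'' with the precise statement $(I^s:z^N)=J^sR$ for $N\gg 0$. This uses $(I^s:z^\infty)=(I:z^\infty)^s$.
\end{itemize}
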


\begin{proof} 
Let   $I=hJ$ for some monomial $h \in R$ (possibly $h=1$) and a  monomial ideal $J$ in $R$  such that if $\mathcal{G}(J)=\{u_1, \ldots, u_r\}$, then 
  $\mathrm{gcd}(u_1, \ldots, u_r)=1$.    It follows from Theorem  \ref{5.2KHN} that 
  \begin{equation} \label{11}
\mathrm{Ass}(R/I^s)=\mathrm{Ass}(R/J^s) \cup \{(t) : t \in \mathrm{supp}(h)\}.
\end{equation}
If $J$ is $\mathfrak{p}$-primary, where $\mathfrak{p}\in \{(x), (y), (z), (x,y), (x,z), (y,z), (x,y,z)\}$, then we can deduce that 
$\mathrm{Ass}(R/J^k)=\{\mathfrak{p}\}$ for all $k\geq 1$. It therefore follows from  (\ref{11}) that  $\mathrm{Ass}(R/I^s)\setminus \{\mathfrak{m}\}=\mathrm{Ass}(R/I^{s+1})\setminus \{\mathfrak{m}\}$, and so 
 the proof is done. Hence,  we assume that $J$ is not primary.
 Since   $\mathrm{gcd}(u_1, \ldots, u_r)=1$, where   $\mathcal{G}(J)=\{u_1, \ldots, u_r\}$, we get    $\mathrm{Min}(J) \subseteq \{(x,y), (x,z), (y,z)\}$.
 On account of   $\mathrm{Min}(J) \subseteq \mathrm{Ass}(R/J^\ell) \subseteq \mathrm{Min}(J) \cup \{\mathfrak{m}\}$
  for all  $\ell\geq 1$, one can conclude that   $\mathrm{Ass}(R/J^s)\setminus \{\mathfrak{m}\}=\mathrm{Ass}(R/J^{s+1})\setminus \{\mathfrak{m}\}$.
 It follows now from (\ref{11}) that   $\mathrm{Ass}(R/I^s)\setminus \{\mathfrak{m}\}=\mathrm{Ass}(R/I^{s+1})\setminus \{\mathfrak{m}\}$. 
 
   The rest assertions can be deduced at once from the first result. 
\end{proof}


We will use the following auxiliary lemma in the sequel.

\begin{lemma}\label{lemm-1}
Let $a,b,m\geq 1$ be some integers  with $a\leq b$. We consider the ideal
$J = x^{m+b}(x,y)^a + y^{m+a}(x,y)^b \subset K[x,y].$
Then $$J^s = (x,y)^{s(m+a+b)}\quad  \text{ for all } \quad s\geq \left\lceil \frac{m+a-1}{a} \right\rceil.$$
In particular, if $a=1$,   then $J^s = (x,y)^{s(m+1+b)}$  for all $s\geq m$.
\end{lemma}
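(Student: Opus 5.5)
Both $J^s$ and $(x,y)^{s(m+a+b)}$ are generated in degree $s(m+a+b)$. The inclusion $J^s\subseteq (x,y)^{s(m+a+b)}$ is therefore automatic, and the task reduces to showing that every monomial $x^iy^{N-i}$ with $N:=s(m+a+b)$ and $0\le i\le N$ lies in $J^s$. I will describe $J^s$ explicitly:
\[
J^s=\sum_{\ell=0}^{s} x^{(m+b)\ell}y^{(m+a)(s-\ell)}(x,y)^{a\ell+b(s-\ell)}.
\]
The $\ell$-th summand consists of all monomials of degree $N$ whose $x$-exponent lies in the interval
\[
A_\ell=\bigl[(m+b)\ell,\;(m+b)\ell+a\ell+b(s-\ell)\bigr].
\]
Equivalently, the $y$-exponent of such a monomial is at least $(m+a)(s-\ell)$. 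Hence it suffices to prove that the intervals $A_0,\dots,A_s$ cover $\{0,1,\dots,N\}$.

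Check the endpoints first. We have $A_0=[0,bs]$, and the right end of $A_s$ is $(m+b)s+as=N$. Since the left endpoints $(m+b)\ell$ increase with $\ell$, covering holds exactly when consecutive intervals leave no gap:
\[
(m+b)(\ell+1)\le (m+b)\ell+a\ell+b(s-\ell)+1 \qquad \text{for all } 0\le \ell\le s-1 .
\]
This rewrites as $m+b-1\le a\ell+b(s-\ell)$, that is, $(b-a)\ell\le b(s-1)-m+1$.

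The right-hand side $a\ell+b(s-\ell)$ is nonincreasing in $\ell$ because $a\le b$, so the worst case is $\ell=s-1$. There the condition becomes $m+b-1\le a(s-1)+b$, i.e. $as\ge m+a-1$, i.e. $s\ge \lceil (m+a-1)/a\rceil$, which is precisely the hypothesis. For $a=1$ the bound reads $s\ge m$, giving the particular case.

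The main obstacle is getting the description of the $\ell$-th summand right, so that the covering criterion is exact. One point to verify is that $x^{(m+b)\ell}y^{(m+a)(s-\ell)}(x,y)^{d}$ contributes exactly the degree-$N$ monomials with $x$-exponent in $A_\ell$: a monomial $x^iy^{N-i}$ in this summand must have $i\ge (m+b)\ell$ and $N-i\ge (m+a)(s-\ell)$, and these two conditions recover $A_\ell$. The other point is that, since everything is homogeneous of degree $N$, no cancellation or minimality subtleties arise. Once these are in place, the rest is the interval arithmetic above.
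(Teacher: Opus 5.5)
Your proof is correct, and it takes a different route from the paper's.

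\textbf{The paper's route.} The paper first factors out a common power of the maximal ideal, writing $J=(x,y)^a\bigl(x^{m+b},x^{b-a}y^{m+a},\dots,y^{m+b}\bigr)$; this is where $a\le b$ is used. Hence $J^s=(x,y)^{as}\bigl(x^{m+b},\dots,y^{m+b}\bigr)^s$. For each $v=x^{N-j}y^j$ it writes $j=q(m+b)+r$ and gives an explicit factorization of $v$ in three cases: $j\ge s(m+b)$, $r\ge m+a$, and $r\le m+a-1$. The hypothesis $as\ge m+a-1$ is needed only in the last case, to absorb $x^{as-r}y^r$ into $(x,y)^{as}$.

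\textbf{Your route.} You expand $(P+Q)^s$ binomially and observe that the degree-$N$ part of $P^\ell Q^{s-\ell}$ is a window of consecutive $x$-exponents. This reduces the whole question to an interval-covering check.

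\textbf{Comparison.} The paper's argument produces an explicit certificate for each monomial. Yours is more transparent and gives two things the paper does not get for free.
\begin{enumerate}
\item[(i)] The assumption $a\le b$ enters only when you locate the binding constraint. For $a>b$ the worst case is $\ell=0$, which gives $bs\ge m+b-1$; this is exactly Remark \ref{remmy}(2), with no separate argument needed.
\item[(ii)] The right endpoints $(m+a)\ell+bs$ of your intervals are also increasing in $\ell$. So for $as<m+a-1$ the integer $(m+a)(s-1)+bs+1$ lies in no $A_\ell$, and the gap condition at $\ell=s-1$ is necessary as well as sufficient. Your argument therefore shows the threshold is sharp: for $s\ge 1$, $J^s=(x,y)^{s(m+a+b)}$ if and only if $as\ge m+a-1$.
\end{enumerate}

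Only the sufficiency half of your ``exactly when'' is needed for the lemma itself. That half already follows from the left endpoints increasing, $A_0$ starting at $0$, and $A_s$ ending at $N$.
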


\begin{proof}
Let  $s\geq \left\lceil \frac{m+a-1}{a} \right\rceil$ be fixed.
We write $$J=(x,y)^a ( (x^{m+b}) + y^{m+a}(x,y)^{b-a}).$$ Note that
\begin{equation}\label{Js}
J^s = (x,y)^{as}(x^{m+b},x^{b-a}y^{m+a},x^{b-a-1}y^{m+a+1},\ldots, y^{m+b})^s.
\end{equation}
By our assumption on $s$, we get $as\geq m+a-1$. Let $j$ be  such that \linebreak $0\leq j\leq s(m+a+b)$
and $v=x^{s(m+a+b)-j}y^j$. We prove that $v\in J^s$.  

If $j\geq s(m+b),$ then, according to \eqref{Js}, we deduce that 
$$v= x^{as+(m+b)s-j} y^{j-(m+b)s} (y^{(m+b)})^s \in J^s.$$
Now, assume that $j\leq s(m+b)-1$. By remainder theorem, we can write
$$j = q(m+b) + r,\text{ where }0\leq r \leq m+b-1.$$
Note that $0\leq q\leq s-1$. If $m+a\leq r\leq m+b-1,$ then, according to \eqref{Js}, we obtain 
$$v =  x^{as} (x^{m+b})^{s-q-1}(y^{m+b})^q (x^{m+b-r}y^r) \in J^s.$$
If $0 \leq r\leq m+a-1,$ then, according to \eqref{Js}, we can conclude that 
$$v =  x^{as-r} y^r (x^{m+b})^{s-q}(y^{m+b})^q \in J^s.$$ 
By all the above, we get the required conclusion.
\end{proof}

\begin{remark}\label{remmy}
(1) The conclusion of Lemma \ref{lemm-1} holds, if we consider the ideal 
$$J=(x,y)^a x^{m+b} + (x,y)^b y^{m+a},$$
as an ideal in a polynomial extension of $K[x,y]$, for instance, as an ideal of $R=K[x,y,z]$.

(2) If we have $a>b$, in the statement of Lemma \ref{lemm-1}, then we can deduce a similar conclusion, that is,  
$$J^s = (x,y)^{s(m+a+b)} \quad  \text{ for all }     \quad  s\geq \left\lceil \frac{m+b-1}{b} \right\rceil.$$
In particular, for $b=1$, it holds that $J^s=(x,y)^{s(m+a+1)}$  for all $s\geq m$.
\end{remark}

Assume that \(I\subset R=K[x_1,\ldots,x_n]\) is a monomial ideal.
The \textit{contraction} of \(I\) with respect to \(x_i\), for \(1\le i\le n\), denoted by $I/x_i$,
is obtained by setting \(x_i=1\) in each minimal generator of \(I\).
From \cite[Lemma~3.18(ii)]{SN}, for all $s\geq 1$, we have $(I/x_i)^s=I^s/x_i$. On the other hand, it is easy to see that 
\[
I/{x_i}=(I:x_i^\infty),
\qquad\text{where}\qquad
(I:x_i^\infty)=\bigcup_{k\geq 1}(I:x_i^k).
\]
Therefore, this discussion leads to the  following equality 
\begin{equation} \label{4.1}
(I^s:x_i^\infty)=(I:x_i^\infty)^s.
 \end{equation}

\begin{proposition}\label{cor-magic}
Let $I\subset R=K[x,y,z]$ be a monomial ideal, and assume that 
$$J:=(I:z^{\infty})=(x,y)^a x^{m+b} + (x,y)^b y^{m+a},$$
where $b\geq a\geq 1$ and $m\geq 1$. Also, we assume that any monomial $u\in \mathcal G(I)$ has the form
$u=vz^c$, where $v\in \mathcal G(J)$ and $c\geq 0$. Then we have
\begin{enumerate}
\item[(1)] $(I^s:z^{\infty})=(x,y)^{s(m+a+b)} \text{ for all }s\geq \left\lceil \frac{m+a-1}{a} \right\rceil.$
\item[(2)] For any $s\geq \left\lceil \frac{m+a-1}{a} \right\rceil$ and $0\leq j\leq s(m+a+b)$, there exists
           $c\geq 0$ such that $u=x^j y^{s(m+a+b)-j} z^c\in \mathcal G(I^s)$. Moreover, any monomial $u\in \mathcal G(I^s)$
					 is of this form.
\item[(3)] $|\mathcal G(I^s)| = s(m+a+b)+1$.
\end{enumerate}
If $a>b$,  then we get a similar conclusion, by interchanging $a$ with $b$.
\end{proposition}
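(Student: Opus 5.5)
The plan is to derive everything from the identity \eqref{4.1}, Lemma \ref{lemm-1} (in the form of Remark \ref{remmy}(1), so that it holds in $R=K[x,y,z]$), and a degree argument. Set $N:=s(m+a+b)$ and fix $s\geq \lceil (m+a-1)/a\rceil$.

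For (1), I would combine \eqref{4.1} with $i$ corresponding to $z$, which gives $(I^s:z^\infty)=(I:z^\infty)^s=J^s$. Lemma \ref{lemm-1}, via Remark \ref{remmy}(1), then gives $J^s=(x,y)^{N}$. The case $a>b$ is handled the same way using Remark \ref{remmy}(2).

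For the "moreover" part of (2), I would use the structural hypothesis on $I$. Every generator of $J$, namely $x^{m+b}\cdot(\text{monomial of degree }a)$ or $y^{m+a}\cdot(\text{monomial of degree }b)$, is a monomial in $x,y$ of degree exactly $m+a+b$. Any $u\in\mathcal G(I^s)$ is a product $\prod_{i=1}^s v_iz^{c_i}$ with $v_i\in\mathcal G(J)$. Hence $u=wz^c$, where $w=\prod v_i$ is a monomial in $x,y$ of degree $N$. Therefore $u=x^jy^{N-j}z^c$ for some $0\leq j\leq N$ and $c\geq 0$.

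For the existence part of (2), fix $j$. By (1), $x^jy^{N-j}\in (I^s:z^\infty)$, so $x^jy^{N-j}z^c\in I^s$ for some $c$. Let $c_j$ be the least such exponent and put $u_j:=x^jy^{N-j}z^{c_j}$. To see that $u_j\in\mathcal G(I^s)$, suppose some $g\in\mathcal G(I^s)$ divides $u_j$. By the previous paragraph $g=x^{j'}y^{N-j'}z^{c'}$. Divisibility gives $j'\leq j$ and $N-j'\leq N-j$, so $j'=j$. Then $c'\leq c_j$, and $c'\geq c_j$ by the minimality of $c_j$, so $g=u_j$.

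For (3), I would show that $j\mapsto u_j$ is a bijection from $\{0,\ldots,N\}$ onto $\mathcal G(I^s)$. It is injective because distinct $j$ give distinct $x$-exponents. It is surjective because any two generators with the same $j$ differ only in the power of $z$, so one divides the other and they must coincide by minimality. Hence $|\mathcal G(I^s)|=N+1$.

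The only delicate point I expect is checking that the $(x,y)$-degree of every product is exactly $N$. This needs the hypothesis that each $v$ lies in $\mathcal G(J)$, not merely in $J$, so that products of generators cannot have $(x,y)$-degree larger than $N$.
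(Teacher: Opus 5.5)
Your proposal is correct and follows essentially the same route as the paper. Part (1) comes from \eqref{4.1} together with Lemma \ref{lemm-1}. The existence in (2) comes from taking the least $z$-exponent $c$ with $x^jy^{N-j}z^c\in I^s$, and the ``moreover'' part from every element of $\mathcal G(J)$ having $(x,y)$-degree $m+a+b$. The only differences are small. You check minimality of $x^jy^{N-j}z^{c_j}$ via the ``moreover'' part, whereas the paper uses that $(I^s:z^{\infty})=(x,y)^{N}$ contains no monomial in $x,y$ of degree below $N$. You also make explicit the uniqueness of $c$ for each $j$, which the paper leaves implicit when deducing (3).
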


\begin{proof}
(1) It follows immediately from Lemma \ref{lemm-1} and  (\ref{4.1}).

(2) Let $s\geq \left\lceil \frac{m+a-1}{a} \right\rceil$ and $0\leq j\leq s(m+a+b)$. 
Since $$v=x^j y^{s(m+a+b)-j} \in (I^s:z^{\infty}),$$ it follows that there exists some integer $c\geq 0$
such that $u:=vz^c \in I^s$. We can assume that $c\geq 0$ is minimal with this property. 
We claim that $u\in \mathcal G(I^s)$. Indeed, if this is not the case, there exists a monomial $u' \mid u$, with $u'\neq u$, 
such that $u'\in I^s$. Since $c\geq 0$ is minimal with $u\in I^s$, it follows that $u'=x^{j'}y^{k'}z^c$, where 
either $j'<j$ and $k'\leq s(m+a+b)-j$, or $j'\leq j$ and $k'<s(m+a+b)-j$. 
It follows that $x^{j'}y^{k'}\in (I^s:z^{\infty})$  with $j'+k'<s(m+a+b)$, which contradicts 
the fact that $(I^s:z^{\infty})$ is minimally generated in degree $s(m+a+b)$.

In order to prove the last assertion, it is enough to note that $I^s$ is generated by monomials of the form
$x^{s(m+a+b)-j}y^jz^c$, where $0\leq j\leq s(m+a+b)$ and $c\geq 0$. Hence, any minimal monomial generator must be of this type.

(3) It follows from (2).
\end{proof}


In what follows, we consider five cases concerning the appearance of the
maximal ideal \((x,y,z)\) among the associated primes of powers of monomial
ideals in \(K[x,y,z]\).
 
 \medskip
\noindent\textbf{Case (I):} \\
Let $I\subset R=K[x,y,z]$ be a monomial ideal, $\mathfrak{m}=(x,y,z)$, and $I=Q \cap L$ be a minimal primary decomposition of $I$ such that $\sqrt{Q} \in \{(x,y), (x,z), (y,z)\}$ and $\sqrt{L}=\mathfrak{m}$. The following example provides a concrete instance for the case 
in which there exists some positive integer $m\geq 1$ such that  $\mathfrak{m}\in \mathrm{Ass}(R/I^s)$ for all $1\leq s \leq m$,   but  
 $\mathfrak{m}\notin \mathrm{Ass}(R/I^s)$ for all $s \geq m+1$.

 \medskip
\begin{example} \label{Ex.Ass.PandN}(\cite[Example 1]{NT1})
\em{
Consider the following monomial ideal $$I=(x^{m+3}, y^{m+3}, x^{m+2}y, xy^{m+2}, x^{m+1}y^{2}z^r),$$
 with $m, r\geq 1$ in the polynomial ring   $R=K[x,y,z]$ and  $\mathfrak{m}=(x,y,z)$.  
 It follows that 
\[
\mathrm{Ass}(R/I^s) =
    \begin{dcases}
    \{(x,y), \mathfrak{m}\} & \text{if } 1\leq s\leq m\\
\{(x,y)\}  & \text{if } s\geq m+1. \\
                \end{dcases}
\]
}
\end{example}


\medskip
\noindent\textbf{Case (II):}\\
Let $I\subset R=K[x,y,z]$ be a monomial ideal, $\mathfrak{m}=(x,y,z)$, and $I=Q_1 \cap Q_2$ be a minimal primary decomposition of $I$ such that 
$\sqrt{Q_1} , \sqrt{Q_2}\in \{(x,y), (x,z), (y,z)\}$.   The following  example serves as concrete evidence for the case in which there exists some 
positive integer $m\geq 1$ such that  $\mathfrak{m}\notin \mathrm{Ass}(R/I^s)$ for all $1\leq s \leq m$,   but  
 $\mathfrak{m}\in \mathrm{Ass}(R/I^s)$ for all $s \geq m+1$.

  \medskip
\begin{example}\label{Ex.Ass.NandP}
\em{
Let $I=(x^{2m+2}, zy^{2m+2}, zxy^{2m}, x^{2m}y)$ with $m\geq 1$  be a monomial ideal in  $R=K[x,y,z]$. We prove  that 

\[
\mathrm{Ass}(R/I^s) =
    \begin{dcases}
    \{(x,y), (x,z)\} & \text{if } 1\leq s\leq m\\
\{(x,y), (x,z),  \mathfrak{m}\}  & \text{if } s\geq m+1. \\
                \end{dcases}
\]
It is not hard to see that   $$I=(x^{2m+2}, y) \cap (x, y^{2m+2}) \cap (x^{2m}, y^{2m}) \cap (x^{2m}, z).$$
This gives that $\mathrm{Ass}(R/I)=\{(x,y), (x,z)\}$. According to   Lemma \ref{Lem.Ass}, we  can deduce   that 
 $ \{(x,y), (x,z)\} \subseteq \mathrm{Ass}(R/I^t) \subseteq \{(x,y), (x,z), \mathfrak{m}\}$ for all $t\geq 1$. 

In the first  step, fix $s\geq m+1$. In order to establish $\mathfrak{m} \in \mathrm{Ass}(R/I^s)$, set $\lambda:=x^{2m}y^{(s-1)(2m+2)}z^{s-1}$. 
 We thus get the following 
 \begin{enumerate}
  \item   Since $(zy^{2m+2})^{s-2} (zxy^{2m})(x^{2m}y) \mid x^{2m+1} y^{(s-1)(2m+2)}z^{s-1},$  we can deduce that  $x\lambda \in I^s$. 
  \item   Due to $(zy^{2m+2})^{s-1} (x^{2m}y) \mid x^{2m} y^{(s-1)(2m+2)+1}z^{s-1},$  we obtain   $y\lambda \in I^s$. 
  \item  It follows from $(zy^{2m+2})^{s-m-1} (zxy^{2m})^{m+1} \mid x^{2m} y^{(s-1)(2m+2)}z^{s}$ that  $z\lambda \in I^s$. 
  \end{enumerate}
  This implies that $(x,y,z) \subseteq (I^s:\lambda)$ for all $s\geq m+1$. We now establish $\lambda \notin I^s$. Suppose, on the contrary, that $\lambda \in I^s$. 
   Thus, there exist nonnegative integers $a,b,c,$ and $d$ with $a+b+c+d=s$  such that 
   $$(x^{2m+2})^a  (zy^{2m+2})^b  (zxy^{2m})^c  (x^{2m}y)^d \mid x^{2m} y^{(s-1)(2m+2)} z^{s-1}.$$
  In particular, we have the following conditions:
  
  \begin{itemize}
\item[(i)] $(2m+2)a+ c+ 2md\leq 2m$;
\item[(ii)] $(2m+2)b+ 2mc+ d \leq (s-1)(2m+2)$; 
\item[(iii)] $b+c \leq s-1$;
\item[(iv)] $a+b+c+d=s$, where $a,b,c,d \geq 0$. 
\end{itemize}
  
 Based on (i), we must have $d\in \{0,1\}$. We first assume that $d=1$. Then, by (i), we can deduce that $a=0$ and $c=0$. It follows from (iv) that $b=s-1$. 
 Now, the left hand side of (ii) is $2ms-2m+2s-1$, while the right hand side of (ii) is $2ms-2m+2s-2$. This leads to a contradiction. Therefore, $d\neq 1$, and so 
 $d=0$. We can conclude from (i) that $a=0$, and by (iv), we get $b+c=s$. On the other hand, by (iii), we have $b+c \leq s-1$, this gives the required contradiction. Accordingly, $\lambda \notin I^s$, and so $\mathfrak{m} \in \mathrm{Ass}(R/I^s)$ for all $s\geq m+1$.  
  
  In the  second  step,  we want to  prove   $\mathfrak{m} \notin \mathrm{Ass}(R/I^s)$ for all  $1\leq s \leq m$. 
  Let $1\leq s\leq m$. Note that $I=x^{2m}(x^2,y)+zy^{2m}(x,y^2)$. Therefore,
\begin{equation}\label{Ies}
I^s = \sum_{\ell=0}^s x^{2m(s-\ell)}y^{2m\ell}z^{\ell}(x^2,y)^{s-\ell}(x,y^2)^{\ell}.
\end{equation}
We claim that
\begin{equation}\label{eLes}
L_s:=(x^2,y)^{s-\ell}(x,y^2)^{\ell} = x^{\ell}(x^2,y)^{s-\ell}+y^{s-\ell}(x,y^2)^{\ell} = (w_{\ell,0},w_{\ell,1},\ldots,w_{\ell,s}),
\end{equation}
where $w_{\ell,j}:=\begin{cases} x^{2s-\ell-2j}y^j, & 0\leq j\leq s-\ell \\ x^{s-j}y^{\ell-s+2j},& s-\ell+1\leq j \leq s \end{cases}$.
Since $x^{\ell} \in (x,y^2)^{\ell}$, it follows that $x^{\ell}(x^2,y)^{s-\ell}\subset L_s$.
Similarly, as $y^{s-\ell}\in (x^2,y)^{s-\ell}$, this yields that   $y^{s-\ell}(x,y^2)^{\ell}\subset L_s$.
Hence, we proved $\supseteq$ in \eqref{eLes}. 

In order to prove the other inclusion, we choose $0\leq j\leq s-\ell$
and $0\leq k\leq \ell$. It is enough to show that 
$$u=x^{2(s-\ell-j)}y^{j}\cdot x^{\ell-k}y^{2k} = x^{2s-\ell-2j-k}y^{j+2k} \in x^{\ell}(x^2,y)^{s-\ell}+y^{s-\ell}(x,y^2)^{\ell}.$$
We claim that $w_{\ell,j+k}\mid u$. If $j+k\leq s-\ell$, then we get 
$$w_{\ell,j+k}=x^{2s-\ell-2j-2k}y^{j+k}\mid x^{2s-\ell-2j-k}y^{j+2k} = u,$$   as required. 
If $j+k>s-\ell$, then $w_{\ell,j+k}=x^{s-j-k}y^{\ell-s+2j+2k}$. Since $j\leq s-\ell$, it follows that
$2s-\ell-2j-k\geq s-j-k$ and, similarly, $j+2k\geq \ell-s+2j+2k$. Therefore, $w_{\ell,j+k}\mid u$, as required. 
Hence, we proved \eqref{eLes}. From \eqref{Ies} and \eqref{eLes},  it follows that
$$I^s = \sum_{\ell=0}^s x^{2m(s-\ell)}y^{2m\ell}z^{\ell} (w_{\ell,0},w_{\ell,1},\ldots,w_{\ell,s}).$$
For $0\leq \ell\leq s$ and $0\leq j\leq s$, we denote
\begin{align*}
& u_{j+\ell (s+1)}:= x^{2m(s-\ell)}y^{2m\ell}z^{\ell}\cdot w_{\ell,j}\in R\text{ and }\\
& v_{j+\ell s}:=u_{j+\ell s}/x^{\deg_x(u_{j+\ell s})}\in K[y,z].
\end{align*}
This implies that  $I^s=(u_0,u_1,\ldots,u_{s^2+2s})$. Moreover, $u_0=x^{(2m+2)s}$. 
From the definition of $w_{\ell,j}$, it is easy to see
that 
\begin{align*}
& 2s-\ell = \deg_x(w_{\ell,0})>\deg_x(w_{\ell,1})>\cdots >\deg_x(w_{\ell,s}) = 0,  \text{ and}\\
& 0 = \deg_y(w_{\ell,0})<\deg_y(w_{\ell_1})< \cdots < \deg_y(w_{\ell,s}) = \ell+s.
\end{align*}
On the other hand, since $s\leq m$, for $0\leq\ell\leq s-1$, we have 
\begin{align*}
& \deg_x(u_{s + \ell(s+1)}) = 2m(s-\ell) >  \deg_x (u_{(\ell+1)(s+1)}) = 2m(s-\ell-1)+2s-\ell-1,\\
& \deg_y(u_{s + \ell(s+1)}) = 2m\ell + \ell+s < \deg_y (u_{(\ell+1)(s+1)}) =2m(\ell+1).
\end{align*}
From all the above, it follows that
\begin{align*}
&(2m+2)s=\deg_x(u_0)>\deg_x(u_1)>\cdots>\deg_x(u_{s^2+2s})=0,  \text{ and }\\
& 0=\deg_y(u_0)<\deg_y(u_1)<\cdots<\deg_y(u_{s^2+2s}).
\end{align*}
Thus, $\mathcal G(I^s)=\{u_0,u_1,\ldots,u_{s^2+s}\}$.
Also, it is routine to check that  $$\deg_z(u_0)\leq \deg_z(u_1) \leq \cdots \leq \deg_z(u_{s^2+2s}).$$ Thus, 
$v_1\mid v_2 \mid \cdots \mid v_{s^2+2s}$ and the divisions are strict as well. We write 
$$I^s = (x^{(2m+2)s})+x^{i_1}(v_1)+ \cdots + x^{i_{s^2+2s-1}}(v_{s^2+2s-1})+(v_{s^2+2s}),$$
where $i_j=\deg_x(u_j)$. From Corollary  \ref{p1}, we deduce that  $\depth(R/I^s)=1$, and therefore $\mathfrak m\notin \Ass(R/I^s)$, 
as desired.
   }
\end{example}


 \medskip
\noindent\textbf{Case (III):} \\
Suppose that $I\subset R=K[x,y,z]$ is a monomial ideal, $\mathfrak{m}=(x,y,z)$, and $I=Q_1 \cap Q_2 \cap Q_3$
 is a minimal primary decomposition of $I$ such that 
$\sqrt{Q_1} , \sqrt{Q_2}, \sqrt{Q_3}\in \{(x,y), (x,z), (y,z)\}$.    The next  example acts as strong evidence for the case in which there exists some  positive integer  $m\geq 2$ such that  $\mathfrak{m}\notin \mathrm{Ass}(R/I^s)$ for all $1\leq s \leq m-1$,   but  
 $\mathfrak{m}\in \mathrm{Ass}(R/I^s)$ for all $s \geq m$.

    \medskip
\begin{example} \label{Ex.Ass.NandP.3}
\em{
Assume that  $I=(x^{2m+1}z, x^my^4, x^{m+1}y^2, y^{2m+1}z)$ with $m\geq 2$  is  a monomial ideal in  $R=K[x,y,z]$ and 
$\mathfrak{m}=(x,y,z)$.   We show  that 
\[
\mathrm{Ass}(R/I^s) =
    \begin{dcases}
    \{(x,y), (x,z), (y,z)\} & \text{if } 1\leq s\leq m-1\\
\{(x,y), (x,z), (y,z), \mathfrak{m}\}  & \text{if } s\geq m. \\
                 \end{dcases}
\]
One can easily see that $$I=(x^{2m+1}, y^2) \cap (y^2,z) \cap (x^m,z) \cap (x^m, y^{2m+1}) \cap (x^{m+1}, y^4).$$ 
This yields that $\mathrm{Ass}(R/I)=\{(x,y), (x,z), (y,z)\}$.  It follows from Lemma \ref{Lem.Ass} that
 $ \{(x,y), (x,z), (y,z)\} \subseteq \mathrm{Ass}(R/I^t) \subseteq \{(x,y), (x,z), (y,z), \mathfrak{m}\}$ for all $t\geq 1$. 
 
In the first step, let $s\geq m$. Our aim is to show $\mathfrak{m} \in \mathrm{Ass}(R/I^s)$. 
Put $h:=x^{m^2} y^{4m-1 + (s-m)(2m+1)} z^{s-m}$.  Hence, we have the following 
 \begin{enumerate}
  \item   On account of  $$(x^m y^4)^{m-1} (x^{m+1}y^2) (y^{2m+1}z)^{s-m} \mid x^{m^2+1} y^{4m-1 + (s-m)(2m+1)} z^{s-m},$$
    we can deduce that  $xh \in I^s$. 
  \item   Due to $(x^my^4)^m (y^{2m+1}z)^{s-m} \mid   x^{m^2} y^{4m-1 + (s-m)(2m+1)+1} z^{s-m},$  we get   $yh \in I^s$. 
  \item  According to $$(x^{m+1}y^2)^{m-1} (y^{2m+1}z)^{s-m+1} \mid x^{m^2} y^{4m-1 + (s-m)(2m+1)} z^{s-m+1},$$
   we derive that   $zh \in I^s$.  
  \end{enumerate}
  This gives  that  $(x,y,z) \subseteq (I^s:h)$ for all $s\geq m$. In what follows, we verify that $h\notin I^s$. Otherwise, we have $h\in I^s$. Hence, there exist 
  nonnegative integers $a,b,c,$ and $d$ with   $a+b+c+d=s$    such that 
  $$(x^{2m+1}z)^a  (x^my^4)^b (x^{m+1}y^2)^c  (y^{2m+1}z)^d \mid x^{m^2} y^{4m-1 + (s-m)(2m+1)} z^{s-m}.$$
   In particular, we have the following conditions:
  
  \begin{itemize}
\item[(i)] $(2m+1)a + mb + (m+1)c \leq m^2$;
\item[(ii)] $4b+2c+(2m+1)d \leq 4m-1+(s-m)(2m+1)$; 
\item[(iii)] $a+d \leq s-m$;
\item[(iv)] $a+b+c+d=s$. 
\end{itemize}
  
 It follows from (iii) and (iv) that $m\leq b+c$. If $b> m$, then $bm > m^2$, which contradicts (i). Thus, $b\leq m$. From $m\leq b+c$ and (i), we get 
  $$m^2 \geq (2m+1)a + mb + (m+1)c \geq (2m+1)a + c+ m^2.$$ This implies that $0 \geq (2m+1)a+c$. Thus, we must have $a=c=0$. 
  Since $a=0$, we can deduce from (iii) that $d\leq s-m$. It follows from $d\leq s-m$, $a=c=0$,  and (iv) that $m\leq b$. Since $b\leq m$ and $m\leq b$, we obtain $b=m$, and by (iv), one has $d=s-m$.  Thanks to $b=m$, $d= s-m$, $c=0$, and (ii), we get $0\leq -1$, which is a contradiction. Consequently, $h\notin I^s$, and therefore 
   $\mathfrak{m} \in \mathrm{Ass}(R/I^s)$ for all $s\geq m$.
   
    In the  second  step,  we prove  $\mathfrak{m} \notin \mathrm{Ass}(R/I^s)$ for all  $1\leq s \leq m-1$.
    If $m=2$, then $s=1$, and therefore there is nothing to prove. Assume that $m\geq 3$.
We denote $g_1=x^{2m+1}z$, $g_2=x^{m+1}y^2$, $g_3=x^{m}y^4$ and $g_4=y^{2m+1}z$. 
For $0\leq j\leq s-1$, we define $u_j:=g_1^{s-j}g_2^j = x^{(2m+1)s-mj}y^{2j}z^{s-j}$. 
For $0\leq \ell \leq s$ and $0\leq j \leq s-\ell$, we define  $u_{a(\ell)+j}:=g_2^{s-\ell-j}g_3^jg_4^{\ell},$
where $a(0)=s$ and $a(\ell+1)=a(\ell)+s-\ell+1$  for all $0\leq \ell\leq s-1$.
It is easy to check that $a(\ell)=(\ell+1)s + \frac{\ell(3-\ell)}{2}$ and, in particular, 
$a(s)=\frac{s(s+5)}{2}$. We claim that
$$I^s=(u_0,u_1,\ldots,u_{a(s)}).$$
Since, by definition, $u_j\in I^s$ for all $0\leq j\leq a(s)$, it is enough to show 
that $I^s\subset (u_0,u_1,\ldots,u_{a(s)})$. In order to do this, we choose an arbitrary monomial 
generator $u$ of $I^s$  and we
prove that there exists some $0\leq j\leq a(s)$ with $u_j\mid u$. 
Let $a,b,c,d\geq 0$ with $a+b+c+d=s$ and let $u:=g_1^ag_2^bg_3^cg_4^d$. 
We consider two cases: (i) $a\geq d$ and (ii) $a<d$.

(i) Assume that $a\geq d$. Let $u'=g_1^{a-d}g_2^{b+d}g_3^{c+d}$. Note that 
    \begin{align*}
    & \deg_x(g_1g_4)=2m+1 =\deg_x(g_2g_3),\\
		& \deg_y(g_1g_4)=2m+1\geq 6 =\deg_y(g_2g_3),\\
		& \deg_z(g_1g_4)=2>0=\deg_z(g_2g_3).
    \end{align*}
		Since $(g_1g_4)^{d}u' = (g_2g_3)^{d}u$, it follows that $u'\mid u$. Hence, we can assume that $d=0$, that is,
		$$u=g_1^ag_2^bg_3^c,\text{ where }a+b+c=s.$$
  	If $a=0$, then $u=g_2^{s-c}g_3^{c}=u_{a(0)+c}=u_{s+c}$, and we are done. 		
		If $c=0$, then $u=g_1^{s-b}g_2^{b}=u_b$, and we are done. 		
		Now, assume that $a,c\geq 1$. Let $u'=g_1^{a-1}g_2^{b+2}g_3^{c-1}$. Note that 
		\begin{align*}
		& \deg_x(u) - \deg_x(u') = 3m+1-2(m+1)=m-1 > 0,\\
		& \deg_y(u) - \deg_y(u') = 0\text{ and }\deg_z(u)-\deg_z(u')=1.
		\end{align*}
		Therefore, $u'\mid u$ and we can replace $u$ with $u'$. Applying the same argument, we will get $a=0$ or $c=0$, 
		and we are done.		
		
(ii) Assume that $a<d$. As in the case (i), without any loss of generality, we may assume that $a=0$, that is
     $$u=g_2^b g_3^c g_4^d = g_2^{(s-d)-c}g_3^{c}g_4^d = u_{a(d)+c},$$
		 and we are done.		         
		 
		 Note that 
		 \begin{align*}
		 & \deg_x(u_0)>\deg_x(u_1)>\cdots>\deg_x(u_s),   \text{ and }\\
		 &  \deg_y(u_0)<\deg_y(u_1)<\cdots<\deg_y(u_s).
		 \end{align*}
     Since $\deg_x(g_2)>\deg_x(g_3)$ and $\deg_y(g_2)<\deg_y(g_3)$, it follows that for any $0\leq \ell\leq s-1$, we have
		 		 \begin{align*} 
		 & \deg_x(u_{a(\ell)})>\deg_x(u_{a(\ell)+1})>\cdots>\deg_x(u_{a(\ell)+s-\ell}), \text{ and}\\
		 & \deg_y(u_{a(\ell)})<\deg_y(u_{a(\ell)+1})<\cdots<\deg_y(u_{a(\ell)+s-\ell}).
		 \end{align*}
		 Also, as $s\leq m-1$, for $0\leq \ell \leq s-1$ we obtain 
     \begin{align*}		
		  \deg_x(u_{a(\ell)+s-\ell})&=\deg_x(g_3^{s-\ell}g_4^{\ell})=(s-\ell)m > (s-\ell-1)(m+1) \\
			                          &=\deg_x(g_2^{s-\ell-1}g_4^{\ell+1}) =\deg_x(u_{a(\ell+1)}), \text{ and }\\
		  \deg_y(u_{a(\ell)+s-\ell})& =4(s-\ell) + (2m+1)\ell < 2(s-\ell-1) + (2m+1)(\ell+1) \\
		                            &  =\deg_y(u_{a(\ell+1)}).
		 \end{align*}
		 From all the above,   it follows that
		 \begin{align*}
		 & \deg_x(u_0)>\deg_x(u_1)>\cdots>\deg_x(u_{a(s)})\text{ and }\\
		 & \deg_y(u_0)<\deg_y(u_1)<\cdots<\deg_y(u_{a(s)}).
		 \end{align*}
		 Moreover, it is easy to see that 
		 $$ 0=\deg_z(u_{a(0)}) \leq \deg_z(u_{a(0)+1}) \leq \cdots \leq \deg_z(u_{a(s)}).$$
		 Now, it is clear that $\mathcal G(I^s)=\{u_0,u_1,\ldots,u_{a(s)}\}$.
		 
		 Let $v_j:=u_j / x^{\deg_x(u_j)}$ for $0\leq j\leq a(s)$. Given the expressions of $u_j$'s and the above 
		 remarks on the degrees in $x$ and $y$ of $u_j$'s, we can deduce that 
		 \begin{itemize}
		 \item $v_j = y^{2j}z^{s-j}$  for $0\leq j\leq s$.
		 \item $v_{s} \mid v_{s+1} \mid \cdots \mid v_{a(s)}$ and the divisions are strict.
		 \end{itemize}
		 Note that $I^s$ can be written as follows  
		 $$I^s = (v_0)x^{(2m+1)s} + (v_1)x^{(2m+1)s-m} + \cdots + x^{m}(v_{a(s)-1}) + (v_{a(s)}).$$
		 Let $1\leq k<j\leq s$. Then $v_j = y^{2j}z^{s-j}\nmid v_k = y^{2k}z^{s-k}$, since $j>k$. 		 
		 Also, for $0\leq j\leq s-1$, we can derive that 
		 $$(v_{j+1},v_{j+2},\ldots,v_s):v_j = (y^{2j+2}z^{s-j-1},y^{2j+2}z^{s-j-2},\ldots,v^{2s}):(y^{2j}z^{s-j}) = (y^2),$$
		 is a principal ideal. It follows now   from Corollary \ref{C4}  that $\depth(R/I^s)=1$, and thus $\mathfrak m\notin \Ass(R/I^s)$,
		 as required.
    }
\end{example}


\medskip
\noindent\textbf{Case (IV):} \\
Suppose that  $I \subset R = K[x,y,z]$ is  a monomial ideal, $\mathfrak{m} = (x,y,z)$, and
 $I = Q_1 \cap Q_2 \cap L$ is  a minimal primary decomposition of $I$ such that
$\sqrt{Q_1}, \sqrt{Q_2} \in \{(x,y), (x,z), (y,z)\}$ and $\sqrt{L}=\mathfrak{m}.$
 The following example illustrates the case in which there exists some  positive integer  $m\geq 1$ such that  $\mathfrak{m}\in \mathrm{Ass}(R/I^s)$ for all $1\leq s \leq m$,   but    $\mathfrak{m}\notin \mathrm{Ass}(R/I^s)$ for all $s \geq m+1$.

\medskip
\begin{example}  \label{Ex.Ass.PandN.2}
\em{
Let $I=(x^{m+3},\,x^{m+2}y,\,x^2y^{m+1}z^2,\,xy^{m+2}z,\,y^{m+3}z),$ where $m\geq 1$  is  a monomial ideal in  $R=K[x,y,z]$ and 
$\mathfrak{m}=(x,y,z)$.   We show  that 
\[
\mathrm{Ass}(R/I^s) =
    \begin{dcases}
    \{(x,y), (x,z), \mathfrak{m}\} & \text{if } 1\leq s\leq m\\
\{(x,y), (x,z)\}  & \text{if } s\geq m+1. \\
                 \end{dcases}
\]
It is straightforward to check that
\begin{align*}
I ={} & (x,y^{m+3})\cap (x^2,y^{m+2})\cap (x^{m+2}, y^{m+1}) \cap (x^{m+3}, y)\\
& \cap (x^{m+2},z) \cap (x^{m+2}, y^{m+2}, z^2).
\end{align*}

This implies that $\mathrm{Ass}(R/I)=\{(x,y), (x,z), (x,y,z)\}$. One can conclude from Lemma \ref{Lem.Ass}  that 
 $ \{(x,y), (x,z)\} \subseteq \mathrm{Ass}(R/I^t) \subseteq \{(x,y), (x,z), \mathfrak{m}\}$ for all $t\geq 1$. 
We first prove that $\mathfrak{m}\in \mathrm{Ass}(R/I^s)$ for all $1\leq s\leq m$. 
To do this, take $h:=x^{m+1+(s-1)(m+2)}y^{m+1}z$. We thus have the following 
\begin{itemize}
\item[$\bullet$] Since $hx = (x^{m+2}y)^s \cdot (y^{m+1-s}z)$, we have $hx \in I^s$.
\item[$\bullet$] It follows from $hy = (xy^{m+2}z) \cdot (x^{m+3})^{s-1} \cdot x^{m-s+1}$ that $hy\in I^s$.
\item[$\bullet$] Because $hz = (x^2y^{m+1}z^2) \cdot (x^{m+3})^{s-1} \cdot x^{m-s}$, we get $hz\in I^s$. 
\end{itemize}
 This implies that   $(x,y,z) \subseteq (I^s:h)$ for all $1\leq s \leq m$. We now show  that $h\notin I^s$. On the contrary, assume that  $h\in I^s$. Hence, there exist    nonnegative integers $a,b,c,d$ and $e$ with   $a+b+c+d+e=s$    such that 
  $$(x^{m+3})^a (x^{m+2}y)^b (x^2y^{m+1}z^2)^c (xy^{m+2}z)^d (y^{m+3}z)^e \mid  x^{m+1+(s-1)(m+2)}y^{m+1}z.$$
  After looking at the variables $x,y,z$, the divisibility condition implies the following conditions:
  
  \begin{itemize}
\item[(i)] $2c + d + e \le 1$;
\item[(ii)] $b + (m+1)c + (m+2)d + (m+3)e \le m+1$; 
\item[(iii)] $(m+3)a + (m+2)b +2c+d \le m+1+(s-1)(m+2)$. 
\end{itemize}
Since $b$, \(c\), \(d\), and \(e\) are nonnegative integers, it follows from (i) and (ii) that \(c = d = e = 0\).
 Due to  $a + b + c + d + e = s$, this yields that  $a = s - b$.
Substituting \(a = s - b\) into (iii), we obtain $b \ge s + 1$.  However, from the condition $a + b = s$ and the fact that $a \ge 0$, 
it must hold that $b \le s$. This yields the contradiction $s + 1 \le b \le s$. Consequently,  our initial assumption is false, and we can 
conclude that $h \notin I^s$. It follows from $(x,y,z) \subseteq (I^s : h)$ and $h \notin I^s$ that $(I^s : h) = (x,y,z)$ for all $1\leq s \leq m$, 
and so $\mathfrak{m}\in \mathrm{Ass}(R/I^s)$ for all $1\leq s\leq m$. 

Let $s\geq m+1$. In what follows, we establish $\mathfrak{m}\notin \mathrm{Ass}(R/I^{s})$. 
Since $$J=(I:z^{\infty})=(x,y) x^{m+2} + (x,y)^2 y^{m+1},$$
and $s\geq m+1$, according to Proposition \ref{cor-magic}, it follows that 
$$(I^s:z^{\infty})=(x,y)^{s(m+3)}.$$
Moreover, any $u\in \mathcal G(I^s)$ has the form $u=x^{s(m+4)-j}y^jz^c$, where $0\leq j\leq s(m+3)$ and $c\geq 0$.

We consider the monomials:
$$ u_j := \begin{cases} x^{(m+3)s-j} y^j, & 0\leq j\leq s \\ x^{(m+3)s-j}y^jz^{\left\lceil \frac{j-s}{m+2} \right\rceil}, 
& s+1\leq j\leq (m+3)s \end{cases}.$$
We demonstrate that 
$$I^s = (u_0,u_1,\ldots,u_{(m+3)s})=:L.$$
Let $0\leq j\leq s$. We can deduce that 
$$u_j = x^{(m+3)s-j}y^j = (x^{m+3})^{s-j}(x^{m+2}y)^j \in I^s.$$
Therefore, $(u_0,u_1,\ldots,u_s) \subset I^s$.

Now, we choose $j$ with $s+1\leq j\leq (m+3)s$ and we let $\ell=\left\lceil \frac{j-s}{m+2} \right\rceil$.
Note that $1\leq \ell\leq s$.
We have  $(m+2)(\ell-1)+1+s \leq  j \leq  (m+2)\ell +s,$ which is equivalent to $$ m+1 \geq  (m+2)\ell+s-j  \geq 0. $$
We denote $c:=(m+2)\ell+s-j$. 
If $c\leq \ell,$ then
$$u_j = x^{(m+3)s-j}y^jz^{\ell} = (x^{m+2}y)^{s-\ell}(xy^{m+2}z)^c(y^{m+3}z)^{\ell-c} \in I^s.$$
Similarly, if $c>\ell,$ then
$$u_j = x^{(m+3)s-j}y^jz^{\ell} = (x^{m+3})^{c-\ell}(x^{m+2}y)^{s-c}(xy^{m+2}z)^{\ell} \in I^s.$$
Hence, we proved that $L\subset I^s$.

To show the reverse inclusion, choose an arbitrary generator of $I^{s}$, say the monomial
 $$u:=(x^{m+3})^a (x^{m+2}y)^b (x^2y^{m+1}z^2)^c (xy^{m+2}z)^d (y^{m+3}z)^e,$$ with
$a,b,c,d$ and $e$ are nonnegative and $a+b+c+d+e=s$.
Note that $$u=x^{a(m+3)+b(m+2)+2c+d}y^{b+(m+1)c+(m+2)d+(m+3)e}z^{2c+d+e}.$$
We put  $\lambda=b+(m+1)c+(m+2)d+(m+3)e$. Since $a+b+c+d+e=s$, it  is straightforward to check that 
$\deg_y(u) =\lambda$ and 
$$\deg_x(u) = a(m+3)+b(m+2)+2c+d = s(m+3)-\lambda.$$
To complete our proof,  it is enough to show that $u_\lambda\mid u$. To do this, we need only prove that 
$$\ell = \left\lceil \frac{\lambda-s}{m+2} \right\rceil \leq \deg_z(u)=2c+d+e.$$
Indeed, since $a,b,c,d,e$ are non negative and $a+b+c+d+e=s$, we have 
 $b+e\leq s-a-c-d\leq s-c.$ This yields that 
\[
b-c+e-s\leq (s-c)-c-s=-2c\leq 0.
\]
Therefore, $\left\lceil \frac{b-c+e-s}{m+2} \right\rceil\leq 0.$ Hence, we obtain 
\begin{align*}
\left\lceil \frac{\lambda-s}{m+2} \right\rceil &= \left\lceil \frac{b + (m+1)c + (m+2)d + (m+3)e - s}{m+2} \right\rceil  \\
&= c+d+e + \left\lceil \frac{b - c + e -s}{m+2} \right\rceil \leq c+d+e \leq 2c+d+e. 
\end{align*}
 Consequently, we can deduce that  $I^s=(u_0,u_1,\ldots,u_{s(m+3)})$.  On the other hand, it is not hard to show that 
 for $j\neq k$, we have $u_j\nmid u_k$ and $u_k\nmid u_j$. This gives that  $\mathcal{G}(I^{s})=\{u_0,u_1,\ldots,u_{s(m+3)}\}$. 
 We can write 
$$
I^{s}
= (x^{s(m+3)}) 
+ x^{s(m+3)-1}(v_1)
+ x^{s(m+3)-2}(v_2) + \cdots + (v_{s(m+3)}),$$
where $v_j=u_j / x^{s(m+3)-j} = \begin{cases} y^j, & 0\leq j\leq s  \\ y^jz^{\left\lceil \frac{j-s}{m+2} \right\rceil}, & s+1
\leq j\leq s(m+3)
\end{cases}$. 

One can easily check that  $v_1\mid v_2\mid \cdots \mid v_{s(m+3)}$   and that all the divisions are strict.
It follows now from  Corollary \ref{p1}  that $\depth(R/I^{s})=1$. Thus, $\mathfrak m=(x,y,z)\notin \Ass(R/I^{s})$. 
Hence, $\Ass(R/I^s)=\Min(I^s)=\{(x,y),(x,z)\}$ for all $s\geq m+1$, as required.
}
\end{example}


 \medskip
\noindent\textbf{Case (V):} \\
Suppose that  $I\subset R=K[x,y,z]$ is a monomial ideal, $\mathfrak{m}=(x,y,z)$, and $I=Q_1 \cap Q_2 \cap Q_3 \cap L$ is  a minimal
 primary decomposition of $I$ such that $\sqrt{Q_1} , \sqrt{Q_2}, \sqrt{Q_3}\in \{(x,y), (x,z), (y,z)\}$ and $\sqrt{L}=\mathfrak{m}$. 
 The next example concerns the case  in which there exists some positive integer $m\geq 1$ such that 
  $\mathfrak{m}\in \mathrm{Ass}(R/I^s)$ for all $1\leq s \leq m$,   but  
 $\mathfrak{m}\notin \mathrm{Ass}(R/I^s)$ for all $s \geq m+1$.

\medskip
\begin{example}  \label{Ex.Ass.PandN.3}
\em{
Consider the following monomial ideal 
$$I=\left(x^{m+4}z,x^{m+3}y,x^{m+2}y^2,x^{m+1}y^3z^2,xy^{m+3}z,y^{m+4}z\right) \subset R=K[x,y,z],$$ 
where $m\geq 1$   and $\mathfrak{m}=(x,y,z)$.  Our aim is to show that 
\[
\mathrm{Ass}(R/I^s) =
    \begin{dcases}
    \{(x,y), (x,z), (y,z), \mathfrak{m}\} & \text{if } 1\leq s\leq m\\
\{(x,y), (x,z), (y,z)\}  & \text{if } s\geq m+1. \\
                 \end{dcases}
\]
One can easily check that 
\begin{align*}
I ={} & (x^{m+2}, z) \cap (x^{m+4}, y) \cap (x, y^{m+4}) \cap (x^{m+1}, y^{m+3}) \cap (x^{m+3}, y^2) \\
& \cap (x^{m+2}, y^{m+3}, z^2) \cap (z, y)  \cap (x^{m+2}, y^3)
\end{align*}
We thus get  $\mathrm{Ass}(R/I)=\{(x,y), (x,z), (y,z), (x,y,z)\}$. It follows from  Lemma \ref{Lem.Ass}  that 
 $ \{(x,y), (x,z), (y,z)\} \subseteq \mathrm{Ass}(R/I^t) \subseteq \{(x,y), (x,z), (y,z), \mathfrak{m}\}$ for all $t\geq 1$. 
  We first demonstrate that  $\mathfrak{m}\in \mathrm{Ass}(R/I^s)$ for all $1\leq s\leq m$. 
 Put  $h:=x^{m+1} y^{s(m+3)-1}z^s$. Hence, we get  the following 
\begin{itemize}
\item[$\bullet$] Since $xh = {(x^{m+2} y^2)} \cdot {(y^{m+4} z)^{s-1}} \cdot y^{m-s+1}z$, we have $xh \in I^s$.
\item[$\bullet$] It follows from $yh = (xy^{m+3}z)^s \cdot x^{m+1-s}$  that $yh\in I^s$.
\item[$\bullet$] Because $zh = {(x^{m+1} y^3 z^2)} \cdot {(y^{m+4} z)^{s-1}} \cdot y^{m-s},$ we get $zh\in I^s$. 
\end{itemize}
 Therefore, we deduce that    $(x,y,z) \subseteq (I^s:h)$ for all $1\leq s \leq m$. Suppose, on the contrary, that
 $h\in I^s$. Hence, there exist nonnegative integers $a,b,c,d,e$ and $f$ with $a + b + c + d + e+f = s$  such that
$$(x^{m+4}z)^a (x^{m+3}y)^b (x^{m+2}y^2)^c (x^{m+1}y^3z^2)^d (xy^{m+3}z)^e (y^{m+4}z)^f \mid x^{m+1} y^{s(m+3)-1}z^s.$$
The divisibility condition gives the following conditions:

\begin{itemize}
\item[(i)]  $(m+4)a + (m+3)b + (m+2)c + (m+1)d + e \le m+1$;
\item[(ii)]  $b + 2c + 3d + (m+3)e + (m+4)f \le s(m+3) - 1$;
\item[(iii)]   $a + 2d + e + f \le s$.
\end{itemize}
From  (i), we must have $a=b=c=0$ and $d\in \{0,1\}$. Hence, one may consider the following cases:

\medskip
\textbf{Case 1.}  $d = 0$. Then (i), (ii), and (iii) imply  $e \le m+1$, $e + f \le s$ and 
$$(m+3)e + (m+4)f \le s(m+3) - 1.$$ 
Assuming that $e+f=s$,  we can deduce the following inequality
  $$(m+3)e + (m+4)f \ge (m+3)(e+f)=s(m+3),$$
  which is a  contradiction. Therefore, we must have $e + f \le s - 1$. This yields that 
  $a + b + c + d + e + f  \le s - 1$, which contradicts $a+b+c+d+e+f=s$. 
  
  \medskip
  \textbf{Case 2.}  $d = 1$. Then (i) implies that $e=0$. It follows from (iii) that $f\leq s-2$. Hence, we obtain $
  a + b + c + d + e + f \le 1 + (s - 2) = s - 1,$ which  contradicts our assumption $a + b + c + d + e+f = s$.   
  
Accordingly, we must have  $h \notin I^s$. Since  $(x,y,z) \subseteq (I^s : h)$ and $h \notin I^s$, this leads to 
 $(I^s : h) = (x,y,z)$ for all $1 \le s \le m$, and so $\mathfrak{m} \in \operatorname{Ass}(R/I^s)$ for all $1 \le s \le m$. 
 
 Let $s\geq m+1$. In what follows, we establish $\mathfrak{m}\notin \mathrm{Ass}(R/I^{s})$. 
Since $J=(I:z^{\infty})=x^{m+1}(x,y)^3+(x,y)y^{m+3}$ and $s\geq m+1$, according to Proposition \ref{cor-magic}, it follows that 
$(I^s:z^{\infty})=(x,y)^{s(m+4)}.$
Moreover, any $u\in \mathcal G(I^s)$ has the form $u=x^{s(m+3)-j}y^jz^c$, where $0\leq j\leq s(m+3)$ and $c\geq 0$.

To simplify the notation, 
 put  $g_1:=x^{m+4}z$, $g_2:=x^{m+3}y$, $g_3:=x^{m+2}y^2$,
$g_4:=x^{m+1}y^3z^2$, $g_5:=xy^{m+3}z$ and $g_6:=y^{m+4}z$.  We want to verify that 
$I^{s}=(u_0,u_1,\ldots,u_{s(m+4)}),$ where 
\begin{equation*}
u_j=\begin{cases} x^{s(m+4)-j}y^j z^{s-j},\;0\leq j\leq s \\
                  x^{s(m+4)-j}y^j, \;s+1 \leq j\leq 2s \\ 
									x^{s(m+4)-j}y^j z^{\left\lceil \frac{j-2s}{m+2}\right\rceil}, \;2s+1 \leq j\leq s(m+4). \end{cases}
\end{equation*}
We first show that $(u_0,u_1,\ldots,u_{s(m+4)}) \subseteq I^{s}$.  We have
\begin{align*}
 u_j = x^{s(m+4)-j}y^jz^{s-j} &= (x^{m+4}z)^{s-j}(x^{m+3}y)^j =g_1^{s-j}g_2^j \in I^{s},\text{ for }0\leq j\leq s, 
\end{align*} and
\begin{align*}
 u_{j+s} = x^{s(m+3)-j}y^{j+s}& = (x^{m+3}y)^{s-j}(x^{m+2}y^2)^j=g_2^{s-j}g_3^{j} \in 
I^{s},\text{ for }1\leq j\leq s.
\end{align*}
Note that, the set of indices $\Lambda:=\{2s+1,2s+2,\ldots, s(m+4)\}$ can be written as
the union of disjoint sets as follows
\begin{align*}
 \Lambda	= & \bigcup_{1\leq\ell\leq s}\{(\ell-1)(m+2)+1+2s,(\ell-1)(m+2)+2+2s,\ldots,\ell(m+2)+2s\}\\
				  = &\bigcup_{\substack{1\leq\ell\leq m+1 \\ 0\leq j\leq (m+1)-\ell}}\{(\ell-1)(m+2)+1+2s+j\} \cup
				     \bigcup_{\substack{1\leq\ell\leq m+1 \\ 1\leq j\leq \ell}}\{\ell(m+1)+2s+j\} \\
			      & \cup \bigcup_{\substack{m+2\leq\ell\leq s \\ 0\leq j\leq m+1}}\{(\ell-1)(m+2)+1+2s+j\}.
\end{align*}
Let $1\leq \ell\leq m+1$ and $0\leq j\leq m+1-\ell$. Then we have 

$$\left\lceil \frac{(\ell-1)(m+2)+1+2s+j-2s}{m+2}\right\rceil = \ell-1 + 
\left\lceil \frac{j+1}{m+2}\right\rceil = \ell.$$
Therefore, we can conclude the following 
\begin{align*}
u_{(\ell-1)(m+2)+ 1+ 2s + j} &= x^{(s-\ell+1)(m+2)-j-1}y^{(\ell-1)(m+2)+ 1+ 2s + j}z^{\ell} \\
                             & = (x^{m+3}y)^{m+1-\ell-j}(x^{m+2}y^2)^{j+s-m-1}(xy^{m+3}z)^{\ell}\\
                             &= g_2^{m+1-\ell-j}g_3^{j+s-m-1}g_5^{\ell}\in I^{s}.
\end{align*}
Also, for $1\leq \ell\leq m+1$ and $1\leq j\leq \ell$, we obtain 
\[
\begin{aligned}
\left\lceil \frac{(\ell(m+1)+2s+j-2s}{m+2}\right\rceil
&=\left\lceil \frac{(m+2)\ell-(\ell-j)}{m+2}\right\rceil 
=\ell-\left\lfloor\frac{\ell-j}{m+2}\right\rfloor=\ell,
\end{aligned}
\]
and we therefore get 
$$u_{\ell(m+1)+2s+j} = x^{(s-\ell)(m+2)+\ell-j}y^{\ell(m+1)+2s+j}z^{\ell} = 
g_3^{s-\ell}g_5^{\ell-j}g_6^j\in I^{s}.$$
Also, for $m+2\leq\ell\leq s$ and $0\leq j\leq m+1$, we obtain
\[
\begin{aligned}
\left\lceil \frac{(\ell-1)(m+2)+1+2s+j-2s}{m+2}\right\rceil
&= \ell-1 + \left\lceil \frac{j+1}{m+2}\right\rceil 
=\ell-1+1=\ell,
\end{aligned}
\]
and we therefore get \[
\begin{aligned}
u_{(\ell-1)(m+2)+1+2s+j} &= x^{(s-\ell+1)(m+2)-j-1}y^{(\ell-1)(m+2)+1+2s+j}z^{\ell}\\
& =  g_3^{s-\ell} g_5^{m+1-j} g_6^{\ell+j-m-1} \in I^s.
\end{aligned}
\]
Accordingly, we can deduce  $(u_0,u_1,\ldots,u_{s(m+4)}) \subseteq I^{s}$. 

In order to prove the other inclusion, 
we choose  arbitrary nonnegative integers $a,b,c,d,e$ and $f$ with $a+b+c+d+e+f=s$ and we  need to show that 
 $$u:=g_1^a g_2^b g_3^c g_4^d g_5^e g_6^f\in (u_0,\ldots,u_{s(m+4)}).$$
 It is easy to see that 
\begin{align*}
u&=(x^{m+4}z)^a(x^{m+3}y)^b(x^{m+2}y^2)^c(x^{m+1}y^3z^2)^d(xy^{m+3}z)^e(y^{m+4}z)^f\\
 &=x^{(m+4)a+(m+3)b+(m+2)c+(m+1)d+e}y^{b+2c+3d+(m+3)e+(m+4)f}z^{a+2d+e+f}.
\end{align*}
Let $j:=b+2c+3d+(m+3)e+(m+4)f$. We have
\[
j=b+2c+3d+(m+3)e+(m+4)f
\leq (m+4)(b+c+d+e+f).
\]
But $b+c+d+e+f=s-a\leq s,$ and so $j\leq s(m+4).$
Hence, we can write  $u=x^{s(m+4)-j}y^jz^{a+2d+e+f}$. 
 Therefore, in order to prove that $u\in (u_0,\ldots,u_{s(m+4)})$,
it is enough to show that $$\deg_z(u) = a+2d+e+f \geq \deg_z(u_j).$$
We consider the following  cases:
\begin{itemize}
\item If $0\leq j\leq s$, then, since $j=b+2c+3d+(m+3)e+(m+4)f$, it follows that 
      \begin{align*}
      \deg_z(u_j)&=s-j=(a+b+c+d+e+f)-(b+2c+3d+(m+3)e+(m+4)f)\\
      &=a-c-2d-(m+2)e-(m+3)f \leq a \leq a+2d+e+f = \deg_z(u),
      \end{align*}
        as required.
\item If $s+1\leq j\leq 2s$, then $\deg_z(u_j)=0$ and there is nothing to prove.
\item Assume that $j\geq 2s+1$ and let $\ell=\left\lceil \frac{j-2s}{m+2}\right\rceil$. This implies that 
      $$ 2s+(m+2)\ell-m-1 \leq j \leq 2s + (m+2)\ell.$$
      Consequently, we can derive that 	
	    $$\deg_x(u_j)=s(m+4)-j \leq s(m+4) - 2s -(m+2)\ell + m+1 = (s-\ell)(m+2)+m+1.$$
				
			Due to  $\deg_x(u)=\deg_x(u_j)$ and  $\deg_x(g_2^b g_3^c) \leq \deg_x(u),$  we obtain
			$$\deg_x(g_2^bg_3^c)=(m+3)b+(m+2)c \leq \deg_x(u_j) \leq (m+2)(s+1-\ell)-1.$$
		  This yields  that $b+c\leq s-\ell$. We thus get
		  \begin{align*}
			\deg_z(u)&=a+2d+e+f = s + (a+2d+e+f) - (a+b+c+d+e+f)  \\
		           &=s+d-b-c \geq s-(b+c) \\
		          & \geq s - (s-\ell)=\ell=\deg_z(u_j),
			\end{align*}
			as required.						
\end{itemize}
Thus, we proved  that   $ I^{s}  \subseteq    (u_0,u_1,\ldots,u_{s(m+4)})$,  which completes the proof. 

For $0\leq j\leq s(m+4)$,  we consider the monomials $v_j\in K[y,z]$ as follows
$$v_j:=u_j/x^{s(m+4)-j}=\begin{cases} y^j z^{s-j},\;0\leq j\leq s \\
 y^j, \;s+1 \leq j\leq 2s \\ y^jz^{\left\lceil \frac{j-2s}{m+2}\right\rceil} , \;2s+1 \leq j\leq s(m+4) \end{cases}.$$
Observe  that $v_{s}\mid v_{s+1}\mid \cdots \mid v_{s(m+4)}$, and the divisions are strict. 
  Furthermore, it is straightforward to verify that 
\[
\begin{aligned}
I^{s}={}&x^{s(m+4)}(v_0)+x^{s(m+4)-1}(v_1)+\cdots+x(v_{s(m+4)-1})+(v_{s(m+4)}).
\end{aligned}
\]
Moreover, for every $1\leq j\leq s-1$, we have the following 
\[
\begin{aligned}
v_{j-1}=y^{j-1}z^{s+1-j}
&\notin (v_j,v_{j+1},\ldots,v_{s(m+4)})\\
&=(v_j,v_{j+1},\ldots,v_{m+1})\\
&=(y^jz^{s-j},\,y^{j+1}z^{s-1-j},\,\ldots,\,y^{s-1}z,\,y^{s}).
\end{aligned}
\]
Since $v_{j-1+k} \mid y^{k}v_{j-1}$ for $1\leq k\leq s+1-j$, and $zv_{j-1}\notin (v_j,v_{j+1},\ldots,v_{m+1})$, it
follows that  $((v_j,v_{j+1},\ldots,v_{s}):v_{j-1})=(y)$  is principal. 
 It follows from  Corollary \ref{C4} that  $\depth(R/I^{s})=1$, and thus $\mathfrak m\notin \Ass(R/I^{s})$, as required.
 }
\end{example}


\section{Fluctuations in  associated primes of powers of monomial ideals}\label{Section5}

Let $I\subset R=K[x_1, \ldots, x_n]$ be a monomial ideal.  Recall that  we say  $I$ has the phenomenon of fluctuation in associated primes of 
powers if there exist  a monomial prime ideal $\mathfrak{p}$ in $R$  and  positive integers $a<b<c$ satisfying  at least one of the following cases:
\begin{itemize}
\item[(i)] $\mathfrak{p} \in \mathrm{Ass}(R/I^a)$, $\mathfrak{p} \notin \mathrm{Ass}(R/I^b)$, and $\mathfrak{p} \in \mathrm{Ass}(R/I^c)$. 
\item[(ii)] $\mathfrak{p} \notin \mathrm{Ass}(R/I^a)$, $\mathfrak{p} \in \mathrm{Ass}(R/I^b)$, and $\mathfrak{p} \notin \mathrm{Ass}(R/I^c)$.
\end{itemize}

Since every monomial ideal in $R=K[x]$ is principal, it follows that there is no fluctuation in the associated primes of its powers. In addition, according to Corollary \ref{K[x,y].2}, we can deduce that there is no fluctuation in the associated primes of  powers of monomial ideals in $R=K[x,y]$. 

  We now study fluctuations in the associated primes of powers of monomial ideals in $R=K[x,y,z]$. We begin by presenting two counterexamples demonstrating that the associated primes of powers of monomial ideals can indeed fluctuate. In particular, these counterexamples will be used to establish three applications in the sequel.

 \begin{example} \label{Counterexample.1}
{\em 
(1) Assume the following monomial ideal in $R = K[x,y,z]$
$$I=(x^8,\,
x^7y,\,
x^6y^2z^5,\,
xy^7z^{10},\,
y^8z^{12}).$$
Put $\mathfrak{m}:=(x,y,z)$.   Using  \textit{Macaulay2} \cite{GS}, we obtain the following  
\[
\begin{aligned}
\operatorname{Ass}(R/I)
&= \{(x,y),(x,z)\}= \mathrm{Min}(I),\\
\operatorname{Ass}(R/I^2)
&= \{(x,y),(x,z)\}= \mathrm{Min}(I),\\
\operatorname{Ass}(R/I^3)
&= \{(x,y),(x,z),(x,y,z)\}=\mathrm{Min}(I)\cup \{\mathfrak{m}\},\\
\operatorname{Ass}(R/I^4)
&= \{(x,y),(x,z),(x,y,z)\}=\mathrm{Min}(I)\cup \{\mathfrak{m}\},\\
\operatorname{Ass}(R/I^5)
&= \{(x,y),(x,z),(x,y,z)\}=\mathrm{Min}(I)\cup \{\mathfrak{m}\},\\
\operatorname{Ass}(R/I^6)
&= \{(x,y),(x,z)\}= \mathrm{Min}(I),\\
\operatorname{Ass}(R/I^7)
&= \{(x,y),(x,z)\}= \mathrm{Min}(I).
\end{aligned}
\]
Thus, there exists the phenomenon of fluctuation in associated primes of powers of $I$. 
 
  (2) Let $I = (x^5, x^4yz, y^3z^5, xy^2z^4, x^3y^4z^3)$ be a monomial ideal in $R=K[x,y,z]$. 
  Using  \textit{Macaulay2} \cite{GS}, we get
 \[
\begin{aligned}
\operatorname{Ass}(R/I)&=\{(x,y),(x,z), (x,y,z)\}=\mathrm{Min}(I)\cup \{\mathfrak{m}\},\\
\operatorname{Ass}(R/I^2)&=\{(x,y),(x,z)\}=\mathrm{Min}(I),\\
\operatorname{Ass}(R/I^3)&=\{(x,y),(x,z),  (x,y,z)\}=\mathrm{Min}(I)\cup \{\mathfrak{m}\}.
\end{aligned}
\]
 Accordingly, one can see that  there exists the phenomenon of fluctuation in associated primes of powers of $I$. 
 }
 \end{example}


 At this point, one may ask the following natural question: can one find infinitely many monomial ideals in
 $R=K[x_1,\ldots,x_n],$ where $n\geq 3$, for which the associated primes of their powers exhibit a fluctuation phenomenon?
The answer is affirmative, as demonstrated by Theorem \ref{Th.Fluctuation}. To show Theorem \ref{Th.Fluctuation}, we need to utilize 
 Lemma \ref{Lem.Fluctuation}.  
To establish Lemma \ref{Lem.Fluctuation}, we first recall the notions of expansion and weighting operations. We begin with the expansion construction for monomial ideals introduced in \cite{BH}. Let $R=K[x_1,\ldots,x_n]$ be a polynomial ring over a field $K$. Fix an ordered $n$-tuple of positive integers $(i_1,\ldots,i_n),$ and consider the polynomial ring $R^{(i_1,\ldots,i_n)}$ over $K$ whose variables are
\[
x_{11},\ldots,x_{1i_1},\,
x_{21},\ldots,x_{2i_2},\,
\ldots,\,
x_{n1},\ldots,x_{ni_n}.
\]
For each $j\in\{1,\ldots,n\}$, let $\mathfrak{p}_j=(x_{j1},x_{j2},\ldots,x_{ji_j}) \subseteq R^{(i_1,\ldots,i_n)}$ 
denote the corresponding monomial prime ideal.  Now let $I\subset R$ be a monomial ideal with a set of monomial generators
 $\{\mathbf{x}^{\mathbf a_1},\ldots,\mathbf{x}^{\mathbf a_m}\},$ where $\mathbf{x}^{\mathbf a_i}
=x_1^{a_i(1)}\cdots x_n^{a_i(n)}$ and $\mathbf a_i=(a_i(1),\ldots,a_i(n)).$ 
For $i=1,\ldots,m$ and $j=1,\ldots,n$, the notation $a_i(j)$ refers to the $j$th entry of the vector $\mathbf a_i$.

The \emph{expansion of $I$ with respect to $(i_1,\ldots,i_n)$}, denoted by $I^{(i_1,\ldots,i_n)}$, is defined as the monomial ideal in $R^{(i_1,\ldots,i_n)}$ given by
\[
I^{(i_1,\ldots,i_n)}
=
\sum_{i=1}^{m}
\mathfrak p_1^{a_i(1)}
\cdots
\mathfrak p_n^{a_i(n)}.
\]
For simplicity, throughout the discussion we use $R^*$ and $I^*$ in place of
$R^{(i_1,\ldots,i_n)}$ and $I^{(i_1,\ldots,i_n)}$, respectively.

As a concrete example, consider the polynomial ring $R=K[x_1,x_2,x_3]$ and let the ordered $3$-tuple be $(2,3,1)$. 
 The corresponding monomial prime ideals in $R^*=K[x_{11},x_{12},x_{21},x_{22},x_{23},x_{31}]$ are
$\mathfrak p_1=(x_{11},x_{12}),$ $\mathfrak p_2=(x_{21},x_{22},x_{23})$ and $\mathfrak p_3=(x_{31}).$ 
Now take the monomial ideal $I=(x_1^2x_2,\;x_2^2x_3,\;x_1x_3^2).$ According to the definition of expansion, we obtain
\[
I^*
=
\mathfrak p_1^2\mathfrak p_2
+
\mathfrak p_2^2\mathfrak p_3
+
\mathfrak p_1\mathfrak p_3^2.
\]

Thus, the expansion of $I$ is the following monomial ideal:
\begin{align*}
I^*=(&
x_{11}^2x_{21},x_{11}^2x_{22},x_{11}^2x_{23},
x_{11}x_{12}x_{21},x_{11}x_{12}x_{22},x_{11}x_{12}x_{23},
x_{12}^2x_{21},\\
& x_{12}^2x_{22},x_{12}^2x_{23},x_{21}^2x_{31},x_{21}x_{22}x_{31},
x_{21}x_{23}x_{31},x_{22}^2x_{31},
x_{22}x_{23}x_{31}, \\ 
& x_{23}^2x_{31},  x_{11}x_{31}^2,x_{12}x_{31}^2).
\end{align*}


We next introduce the weighting operation for monomial ideals. We begin by recalling the definition of a weight and then describe how it induces a weighted ideal.

\begin{definition}
{\em 
Let $R=K[x_1,\ldots,x_n]$ be a polynomial ring over a field $K$. Then a \textit{weight} on $R$ is a map 
$W:\{x_1,\ldots,x_n\}\longrightarrow\mathbb{N}.$ For each $1\leq i \leq n$, we write $w_i=W(x_i)$ and refer to $w_i$ as the weight 
assigned to the variable $x_i$. Let $I\subseteq R$ be a monomial ideal and let $W$ be a weight on $R$. The \textit{weighted ideal} 
associated with $I$ and $W$, denoted by $I_W$, is defined as $I_W=(h(u):u\in\mathcal{G}(I)),$ where $\mathcal{G}(I)$ denotes 
the minimal monomial generating set of $I$, and $h:R\longrightarrow R$ is the unique $K$-algebra homomorphism 
satisfying $h(x_i)=x_i^{w_i}$  for $i=1,\ldots,n.$
}
\end{definition}

To illustrate this construction, consider $R=K[x_1,x_2,x_3,x_4]$ and the monomial ideal
 $I=(x_1^3x_2^2x_4,\;x_1x_3^2,\;x_2x_3x_4^2,\;x_3^3x_4).$ Suppose that $W$ is the weight on $R$ determined by
\[
W(x_1)=2,\qquad
W(x_2)=3,\qquad
W(x_3)=1,\qquad
W(x_4)=4.
\]
The homomorphism $h$ associated with this weight therefore satisfies
\[
h(x_1)=x_1^2,\qquad
h(x_2)=x_2^3,\qquad
h(x_3)=x_3,\qquad
h(x_4)=x_4^4.
\]
Applying $h$ to each minimal generator of $I$ gives
\[
h(x_1^3x_2^2x_4)=x_1^6x_2^6x_4^4,\; h(x_1x_3^2)=x_1^2x_3^2,\; h(x_2x_3x_4^2)=x_2^3x_3x_4^8, \; h(x_3^3x_4)
=x_3^3x_4^4.
\]
Consequently, the weighted ideal is  $I_W=(x_1^6x_2^6x_4^4,\;x_1^2x_3^2,\;x_2^3x_3x_4^8,\;x_3^3x_4^4).$


\medskip
To prove Lemma \ref{Lem.Fluctuation}, we will use the lemmas and proposition below, which we recall here for ease of reference.

\begin{lemma} (\cite [Lemma 1.1]{BH}) \label{Lem.Bayati.Expansion}
Let $I$ and $J$ be monomial ideals in a polynomial ring $S$. Then
\begin{itemize}
\item[(i)] $f \in I^*$ if and only if $\pi(f)\in I$, for all $f\in S^*$;
\item[(ii)] $(I + J)^* = I^* + J^*$;
\item[(iii)] $(IJ)^* = I^*J^*$;
\item[(iv)] $(I \cap J)^* = I^*\cap J^*$;
\item[(v)] $(I : J)^* = (I^* : J^*)$;
\item[(vi)] $\sqrt{I^*} = (\sqrt{I})^*$;
\item[(vii)] If the monomial ideal $Q$ is $\mathfrak{p}$-primary, then $Q^*$ is
$\mathfrak{p}^*$-primary.
\end{itemize}
\end{lemma}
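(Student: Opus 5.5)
Here $\pi:S^*\to S$ is the $K$-algebra map $x_{ij}\mapsto x_i$, and I take $S^*$ to be the expansion ring $R^{(i_1,\ldots,i_n)}$. The plan is to reduce every item to (i), which is the real content. Everything is monomial, so each claim can be tested on monomials.

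For (i), I first note that for a monomial $f\in S^*$ we have $f\in\mathfrak p_1^{a_1}\cdots\mathfrak p_n^{a_n}$ if and only if, for every $j$, the total degree of $f$ in the block $x_{j1},\ldots,x_{ji_j}$ is at least $a_j$. That total degree is exactly the $x_j$-exponent of $\pi(f)$. So $f\in\mathfrak p^{\mathbf a}$ if and only if $x^{\mathbf a}\mid\pi(f)$. Summing over the generators of $I$ gives (i) for monomials, and $I^*$ is a monomial ideal, so (i) follows in general. One point needs checking: the definition of $I^*$ should not depend on the chosen generating set. This is immediate from (i), since $I^*$ is then the span of the monomials $f$ with $\pi(f)\in I$.

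The algebraic identities then follow by comparing monomials through $\pi$, using that $\pi$ is multiplicative and surjective on monomials.
\begin{itemize}
\item[(ii)] Sums are immediate from the definition.
\item[(iii)] Since $\mathfrak p^{\mathbf a}\mathfrak p^{\mathbf b}=\mathfrak p^{\mathbf a+\mathbf b}$, the product $I^*J^*$ is generated by the expansions of products of generators of $I$ and $J$.
\item[(iv)] A monomial $f$ lies in $(I\cap J)^*$ iff $\pi(f)\in I\cap J$, iff $f\in I^*\cap J^*$.
\item[(v)] By (ii), I reduce to $J=(u)$ principal, so $J^*=\mathfrak p^{\mathbf b}$. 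Then $f\in(I^*:J^*)$ means $fg\in I^*$ for all monomials $g$ with $\pi(g)=u$. By (i) this says $\pi(f)u\in I$, that is, $\pi(f)\in (I:u)$.
\item[(vi)] A monomial $f$ lies in $\sqrt{I^*}$ iff $f^k\in I^*$ for some $k$, iff $\pi(f)^k\in I$, iff $f\in(\sqrt I)^*$.
\end{itemize}

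For (vii), I use the monomial characterization of primary ideals. A monomial ideal $Q$ is $\mathfrak p$-primary iff $\sqrt Q=\mathfrak p$ and every minimal generator of $Q$ involves only variables of $\mathfrak p$, or equivalently iff $(Q:x)=Q$ for every variable $x\notin\mathfrak p$.
\begin{itemize}
\item By (vi), $\sqrt{Q^*}=\mathfrak p^*$, and $\mathfrak p^*$ is the prime generated by the blocks of the variables in $\mathfrak p$.
\item If $x_{jk}\notin\mathfrak p^*$, then $x_j\notin\mathfrak p$. By (v) with $J=(x_j)$, and noting that $(x_j)^*=\mathfrak p_j$, we get $(Q^*:\mathfrak p_j)=(Q:x_j)^*=Q^*$.
\end{itemize}
I then need to pass from the single variable $x_{jk}$ to this statement about $\mathfrak p_j$. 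This is direct by (i): $f x_{jk}\in Q^*$ iff $\pi(f)x_j\in Q$, iff $\pi(f)\in Q$, iff $f\in Q^*$.

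The main obstacle is the careful monomial bookkeeping in (i) and the primary criterion in (vii); everything else is formal.
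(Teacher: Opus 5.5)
The paper gives no proof of this lemma (it is only quoted from Bayati--Herzog), so your argument has to stand on its own. Apart from one step, it does.

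\textbf{The gap is the last sentence of your treatment of (i).} You write that ``$I^*$ is a monomial ideal, so (i) follows in general''. That inference is invalid, and (i) as printed, for all $f\in S^*$, is actually false. For a monomial ideal, the correct extension from monomials is: $f\in I^*$ if and only if every monomial in the support of $f$ lies in $I^*$. But $\pi$ is not injective on monomials, so terms of $f$ can cancel under $\pi$. Hence the support of $\pi(f)$ need not be the image of the support of $f$.

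Here is a counterexample. Take $S=K[x_1,x_2]$, expand with respect to $(2,1)$, and let $I=(x_2)$, so $I^*=(x_{21})$. Then $f=x_{11}-x_{12}$ satisfies $\pi(f)=0\in I$, yet $f\notin I^*$.

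For arbitrary polynomials only the implication $f\in I^*\Rightarrow\pi(f)\in I$ survives; it follows by applying the monomial case term by term. So (i) must be stated and proved for monomials $f$ only. Your block-degree argument proves exactly that. Every later item is tested on monomials, so nothing downstream is affected, but you should not claim the general version.

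With (i) restricted to monomials, the rest of your proposal is sound.
\begin{itemize}
\item Well-definedness of $I^*$, and items (ii), (iii), (iv) and (vi), are correct monomial-by-monomial checks. They rely on sums, products, intersections, colons and radicals of monomial ideals being monomial.
\item In (v), the principal case is right, via the observation that $\mathfrak p^{\mathbf b}$ is generated by the monomials $g$ with $\pi(g)=u$. The reduction to principal $J=(u)$ uses $(I:J)=\bigcap_k(I:v_k)$, so it needs (iv) as well as (ii).
\item In (vii), your nonzerodivisor criterion works, provided you keep $\sqrt Q=\mathfrak p$ and use that associated primes of monomial ideals are generated by variables. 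The direct check $fx_{jk}\in Q^*\iff\pi(f)x_j\in Q$ makes the detour through (v) unnecessary.
\item The generator criterion you also state gives (vii) immediately. $Q^*$ is generated by monomials in the blocks of variables belonging to $\mathfrak p$, and $\sqrt{Q^*}=\mathfrak p^*$ by (vi).
\end{itemize}
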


\begin{proposition} (\cite [Proposition 1.2]{BH}) \label{Pro.Bayati.Expansion}
Let $I$ be a monomial ideal, and consider an (irredundant) primary
decomposition $I = Q_1\cap\cdots \cap Q_m$ of $I$. Then 
$I^*={Q^*_1} \cap \cdots \cap {Q^*_m}$ is an (irredundant) primary decomposition of $I^*$.
In particular, $\mathrm{Ass}(S^*/I^*) = \{\mathfrak{p}^* : \mathfrak{p} \in \mathrm{Ass}(S/I)\}.$
\end{proposition}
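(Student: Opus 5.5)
The statement is Proposition~\ref{Pro.Bayati.Expansion}: for an (irredundant) primary decomposition $I=Q_1\cap\cdots\cap Q_m$, the expansion gives $I^*=Q_1^*\cap\cdots\cap Q_m^*$, again (irredundant) primary, and so $\mathrm{Ass}(S^*/I^*)=\{\mathfrak p^*:\mathfrak p\in\mathrm{Ass}(S/I)\}$. The plan is to derive everything from Lemma~\ref{Lem.Bayati.Expansion}. Here $\pi:S^*\to S$ is the $K$-algebra map sending each $x_{jk}$ to $x_j$.

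\textbf{Step 1: the decomposition.} Apply Lemma~\ref{Lem.Bayati.Expansion}(iv) repeatedly, by induction on $m$. This gives $(Q_1\cap\cdots\cap Q_m)^*=Q_1^*\cap\cdots\cap Q_m^*$. By part (vii), each $Q_i^*$ is $\mathfrak p_i^*$-primary, where $\mathfrak p_i=\sqrt{Q_i}$. So this is a primary decomposition of $I^*$.

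\textbf{Step 2: distinct radicals.} Irredundancy needs two checks; the first is that the radicals $\mathfrak p_i^*$ are pairwise distinct. If $\mathfrak p=(x_j:j\in A)$, then $\mathfrak p^*=\sum_{j\in A}\mathfrak p_j$, the ideal generated by all $x_{jk}$ with $j\in A$. Hence $\mathfrak p\mapsto\mathfrak p^*$ is injective on monomial primes, and distinct $\mathfrak p_i$ give distinct $\mathfrak p_i^*$.

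\textbf{Step 3: no component can be omitted.} Suppose $\bigcap_{i\neq j}Q_i^*\subseteq Q_j^*$. Take a monomial $u\in\bigcap_{i\neq j}Q_i\setminus Q_j$. Let $f\in S^*$ be any monomial with $\pi(f)=u$, for example obtained by replacing each $x_l$ by $x_{l1}$. By Lemma~\ref{Lem.Bayati.Expansion}(i), $f\in Q_i^*$ for all $i\neq j$ and $f\notin Q_j^*$, a contradiction. This same criterion (i) is what makes the whole proof work, and it also underlies part (iv).

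\textbf{Step 4: associated primes.} The statement about $\mathrm{Ass}$ follows at once from the irredundant decomposition, using the characterization of associated primes via minimal primary decompositions (\cite[Theorem 4.5]{Atiyah}).

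The only point needing care is Step 3, that irredundancy survives expansion, and the argument above settles it. The rest is bookkeeping with Lemma~\ref{Lem.Bayati.Expansion}.
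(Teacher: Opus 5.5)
Your proof is correct and complete: Lemma~\ref{Lem.Bayati.Expansion}(iv) and (vii) give the primary decomposition, injectivity of $\mathfrak{p}\mapsto\mathfrak{p}^*$ on monomial primes gives distinct radicals, and part~(i), which you rightly apply only to a monomial $f$ (the only case in which its ``if'' direction holds), transfers irredundancy from $I$ to $I^*$. The paper does not prove this statement itself but quotes it from \cite[Proposition 1.2]{BH}, and your argument is essentially the standard one behind that result.
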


\begin{lemma}\label{LEM. Weighted} (\cite[Lemma 3.5]{SN})
Let $I$ and $J$ be two monomial ideals of a polynomial ring $R=K[x_1, \ldots, x_n]$, and $W$ a weight over $R$. Then the following statements hold. 
\begin {itemize}
\item[(i)] $(I+J)_W= I_W + J_W$;
\item[(ii)] $(IJ)_W= I_W J_W$;
\item[(iii)] $(I\cap J)_W= I_W \cap J_W$;
\item[(iv)] $(I :_RJ)_W= (I_W :_R J_W)$.
\end{itemize}
\end{lemma}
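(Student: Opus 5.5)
The whole argument rests on one membership criterion for $I_W$, analogous to Lemma \ref{Lem.Bayati.Expansion}(i) for expansions. Throughout I take the weights to be positive, $w_i\geq 1$; with $w_i=0$ the map $h$ fails to be injective on monomials and the statement can fail.

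For a monomial $f=x_1^{a_1}\cdots x_n^{a_n}$ I define the ``floor'' map
\[
\varphi(f)=x_1^{\lfloor a_1/w_1\rfloor}\cdots x_n^{\lfloor a_n/w_n\rfloor}.
\]
I will use three elementary facts about $h$ and $\varphi$.
\begin{enumerate}
\item[(a)] $h$ maps monomials to monomials and is multiplicative.
\item[(b)] For monomials $u,f$ we have $h(u)\mid f$ if and only if $u\mid \varphi(f)$. This holds because $w_i b_i\leq a_i$ is equivalent to $b_i\leq \lfloor a_i/w_i\rfloor$.
\item[(c)] $\varphi(f\,h(v))=\varphi(f)\,v$ for all monomials $f,v$. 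This holds because $\lfloor (a+wb)/w\rfloor=\lfloor a/w\rfloor+b$.
\end{enumerate}

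From (b) I first deduce that $I_W$ is generated by $h$ of any monomial generating set of $I$, not only of $\mathcal{G}(I)$. Indeed, $h$ preserves divisibility, so $h$ of a non-minimal generator is a multiple of $h$ of a minimal one. Next I derive the key criterion: for a monomial $f$,
\[
f\in I_W \iff \varphi(f)\in I.
\]
To see this, note that $f\in I_W$ if and only if $h(u)\mid f$ for some $u\in\mathcal{G}(I)$, which by (b) holds if and only if $u\mid\varphi(f)$ for some $u\in\mathcal{G}(I)$.

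With the criterion in hand, I prove the four parts as follows.

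Part (i): $\mathcal{G}(I)\cup\mathcal{G}(J)$ generates $I+J$, so $(I+J)_W$ is generated by $h(\mathcal{G}(I))\cup h(\mathcal{G}(J))$, which generates $I_W+J_W$.

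Part (ii): $IJ$ is generated by the products $uv$ with $u\in\mathcal{G}(I)$ and $v\in\mathcal{G}(J)$. Since $h(uv)=h(u)h(v)$, these products map onto a generating set of $I_WJ_W$.

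Part (iii): All ideals involved are monomial, so it suffices to compare monomials. For a monomial $f$, the criterion gives
\[
f\in I_W\cap J_W \iff \varphi(f)\in I\cap J \iff f\in (I\cap J)_W.
\]

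Part (iv): This is the step needing the most care, since the colon must interact correctly with $h$. Here $(I_W:J_W)$ is a monomial ideal, so again I test monomials $f$. Since $J_W$ is generated by $\{h(v): v\in\mathcal{G}(J)\}$, we have
\[
f\in(I_W:J_W)\iff f\,h(v)\in I_W \text{ for all } v\in\mathcal{G}(J).
\]
By the criterion and (c), $f\,h(v)\in I_W$ is equivalent to $\varphi(f)\,v\in I$. Requiring this for all $v\in\mathcal{G}(J)$ is equivalent to $\varphi(f)\in(I:J)$. Finally, the criterion applied to the monomial ideal $(I:J)$ turns $\varphi(f)\in(I:J)$ into $f\in(I:J)_W$, which completes part (iv).
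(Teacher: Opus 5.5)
Your proof is correct. The adjunction $h(u)\mid f\iff u\mid\varphi(f)$ and the identity $\varphi(f\,h(v))=\varphi(f)\,v$ are verified correctly, and the membership test $f\in I_W\iff\varphi(f)\in I$ follows at once. Parts (iii) and (iv) then reduce to monomial-by-monomial checks, which you carry out correctly using only that intersections and colons of monomial ideals are monomial.

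The paper gives no proof of this lemma; it quotes it from [SN, Lemma 3.5]. The usual way to prove such identities works with generators instead.
\begin{itemize}
\item $I\cap J$ is generated by the $\mathrm{lcm}(u,v)$ with $u\in\mathcal{G}(I)$, $v\in\mathcal{G}(J)$.
\item $(I:J)=\bigcap_{v\in\mathcal{G}(J)}(I:v)$, where $(I:v)$ is generated by the $u/\gcd(u,v)$.
\end{itemize}
Since $h$ multiplies each exponent of $x_i$ by $w_i$, it commutes with lcm, gcd and exact division of monomials. This gives (iii), and (iv) then follows from (iii).

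Your route is the exact analogue of Lemma~\ref{Lem.Bayati.Expansion}(i) for expansions. It needs no lcm/gcd bookkeeping and handles (iii) and (iv) with the same one-line test. The generator route instead yields explicit generating sets for $(I\cap J)_W$ and $(I:J)_W$, and it never needs to invert $h$.

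One correction to your aside: the statement does not fail when some $w_i=0$. In that case $h$ simply sets $x_i=1$, which still commutes with lcm, gcd and division. So the generator argument proves (i)--(iv) for all nonnegative weights; only your floor map $\varphi$ stops making sense. What genuinely requires $w_i\geq 1$ is Lemma~\ref{ASS-Weighted}: for $I=(x_1)$ and $w_1=0$ one gets $I_W=R$. Since the paper only uses weights $\alpha_i\geq 1$, your restriction is harmless, but the remark should be dropped or rephrased.
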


\begin{lemma}\label{ASS-Weighted} (\cite[Lemma 3.9]{SN})
Let $I$  be a monomial ideal in a polynomial ring $R=K[x_1, \ldots, x_n]$, and $W$ a weight over $R$. Then 
$\mathrm{Ass}_R(R/I_W)=\mathrm{Ass}_R(R/I).$
\end{lemma}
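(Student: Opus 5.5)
This is the cited result \cite[Lemma 3.9]{SN}, so we may invoke it directly. A self-contained argument would run as follows.

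The plan is to transport a primary decomposition of $I$ through the weighting map. Let $h$ be the $K$-algebra map with $h(x_i)=x_i^{w_i}$, where $w_i\geq 1$ (the weights must be positive, otherwise variables would be killed). First, I would note that $I_W$ is again a monomial ideal. I would also check that membership is compatible with $h$: for a monomial $u$, we have $u\in I$ if and only if $h(u)\in I_W$. Indeed, $v\mid u$ iff $h(v)\mid h(u)$, since the exponents are scaled coordinatewise by positive integers.

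Next, take an irredundant primary decomposition of $I$ into irreducible monomial ideals. Each component is generated by pure powers,
$$Q=(x_{j_1}^{a_1},\ldots,x_{j_r}^{a_r}),$$
which is $(x_{j_1},\ldots,x_{j_r})$-primary. By Lemma \ref{LEM. Weighted}(iii), $I_W=\bigcap Q_W$, and
$$Q_W=(x_{j_1}^{w_{j_1}a_1},\ldots,x_{j_r}^{w_{j_r}a_r})$$
is primary to the same prime. So $\mathrm{Ass}(R/I_W)$ is contained in the set of radicals of the $Q$'s, which is $\mathrm{Ass}(R/I)$.

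For the reverse inclusion, take $\mathfrak p\in\mathrm{Ass}(R/I)$. Then $\mathfrak p=(I:u)$ for some monomial $u$, since monomial ideals have monomial witnesses. By Lemma \ref{LEM. Weighted}(iv) applied with $J=(u)$,
$$(I_W:h(u))=(I:u)_W=\mathfrak p_W=(x_j^{w_j}: x_j\in\mathfrak p).$$
This ideal is $\mathfrak p$-primary rather than prime. To upgrade it to an annihilator that is exactly $\mathfrak p$, I would enlarge the witness: consider
$$u'=h(u)\prod_{x_j\in\mathfrak p}x_j^{w_j-1}.$$
Then $(I_W:u')=((I_W:h(u)):\prod x_j^{w_j-1})$, which is a colon of a monomial parameter ideal in the variables of $\mathfrak p$ by the corner element; this gives exactly $\mathfrak p$, as in Proposition \ref{Pro.Parametric}. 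Hence $\mathfrak p\in\mathrm{Ass}(R/I_W)$.

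The only delicate point is irredundancy, that is, ensuring no associated prime is lost under weighting. The colon computation in the previous step handles this directly, without needing irredundancy of the transported decomposition. I expect that step, justifying $(I:u)_W=(I_W:h(u))$ via Lemma \ref{LEM. Weighted}(iv) and then the corner-element adjustment, to be the main thing to write carefully.
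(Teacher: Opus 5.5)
Your argument is correct. The paper gives no proof of this lemma; it only quotes \cite[Lemma 3.9]{SN}, as your first sentence does, so the self-contained argument is your own addition.

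It is worth comparing it with the natural route, which parallels Proposition~\ref{Pro.Bayati.Expansion} for expansions. There one transports a minimal primary decomposition $I=Q_1\cap\cdots\cap Q_r$ to $I_W=(Q_1)_W\cap\cdots\cap(Q_r)_W$ and checks that each $(Q_i)_W$ is $\sqrt{Q_i}$-primary. Irredundancy then follows because weighting reflects inclusions: $J_W\subseteq L_W$ implies $J\subseteq L$, which is a consequence of your observation that $v\mid u$ if and only if $h(v)\mid h(u)$.

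You use the transported decomposition only for $\mathrm{Ass}(R/I_W)\subseteq\mathrm{Ass}(R/I)$, where redundancy does no harm. For the reverse inclusion you bypass irredundancy entirely. Lemma~\ref{LEM. Weighted}(iv) gives $(I_W:h(u))=\mathfrak p_W$, and the shifted witness $u'=h(u)\prod_{x_j\in\mathfrak p}x_j^{w_j-1}$ gives $(I_W:u')=\mathfrak p$ directly. Your route is shorter and produces an explicit witness for each associated prime. The decomposition route yields the stronger structural fact that an irredundant primary decomposition of $I$ weights to one of $I_W$.

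Two minor points. First, the final step $\bigl(\mathfrak p_W:\prod_{x_j\in\mathfrak p}x_j^{w_j-1}\bigr)=\mathfrak p$ is a one-line exponent check. You need not appeal to Proposition~\ref{Pro.Parametric}, which is formulated for the full set of variables. Second, you are right to insist on $w_i\geq 1$, since with $w_1=0$ the ideal $I=(x_1)$ would give $I_W=R$ and the statement fails.
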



We are now ready to prove Lemma \ref{Lem.Fluctuation}, which plays a crucial role in the proof of Theorem \ref{Th.Fluctuation}.

\begin{lemma}\label{Lem.Fluctuation} 
Let  $I \subset R=K[x_1, \ldots, x_n]$ be a monomial ideal.  Then the following statements hold.
\begin{itemize}
\item[(1)] $I$ has the  phenomenon  of  fluctuation in associated primes of powers   if and only if $I^*$ has the  phenomenon  of  fluctuation in
associated primes of powers,   where $I^*$ denotes the  expansion of $I$.  
\item[(2)] $I$  has the  phenomenon  of  fluctuation in associated primes of powers   if and only if $I_W$ has the  phenomenon  of  fluctuation in
associated primes of powers,  where $I_W$ denotes the  weighted ideal.    
\end{itemize}
\end{lemma}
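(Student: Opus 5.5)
The plan is to show that, for each $k\geq 1$, the sets $\mathrm{Ass}(R/I^k)$ and $\mathrm{Ass}(R^*/(I^*)^k)$ (respectively $\mathrm{Ass}(R/(I_W)^k)$) correspond to each other under a fixed bijection of monomial primes that does not depend on $k$. Once this is established, a fluctuation pattern ``$\mathfrak p\in$, $\notin$, $\in$'' (or its dual) for $I$ at powers $a<b<c$ transfers verbatim to the image prime at the same powers, and conversely. So the proof reduces to two compatibility statements: expansion and weighting both commute with taking powers, and both have a known effect on associated primes.

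For (1), I would first note that Lemma \ref{Lem.Bayati.Expansion}(iii), applied inductively, gives $(I^k)^*=(I^*)^k$ for all $k\geq 1$. Proposition \ref{Pro.Bayati.Expansion}, applied to the monomial ideal $I^k$, then yields
\[
\mathrm{Ass}(R^*/(I^*)^k)=\mathrm{Ass}(R^*/(I^k)^*)=\{\mathfrak p^* : \mathfrak p\in \mathrm{Ass}(R/I^k)\}.
\]
The key point is that $\mathfrak p\mapsto \mathfrak p^*$ is injective on monomial primes of $R$. If $\mathfrak p=(x_j : j\in A)$, then $\mathfrak p^*=\sum_{j\in A}\mathfrak p_j$, and $A$ can be recovered from $\mathfrak p^*$ because the variable blocks are disjoint. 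Hence $\mathfrak p\in\mathrm{Ass}(R/I^k)$ if and only if $\mathfrak p^*\in\mathrm{Ass}(R^*/(I^*)^k)$. Every associated prime of $(I^*)^k$ is of the form $\mathfrak p^*$, so every fluctuating prime $\mathfrak q$ of $I^*$ equals $\mathfrak p^*$ for some $\mathfrak p$ (fix $\mathfrak p$ via any power at which $\mathfrak q$ is associated), and $\mathfrak p$ then fluctuates for $I$. This gives both directions.

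For (2), I would use Lemma \ref{LEM. Weighted}(ii) inductively to obtain $(I^k)_W=(I_W)^k$. For this, the weighted ideal must be well defined through any monomial generating set, not only $\mathcal G(I)$; this is exactly the content of (ii) as stated, since $h$ is a ring homomorphism and $h$ maps the ideal generated by the products of generators onto that of their images. Lemma \ref{ASS-Weighted} applied to $I^k$ then gives $\mathrm{Ass}(R/(I_W)^k)=\mathrm{Ass}(R/(I^k)_W)=\mathrm{Ass}(R/I^k)$ for every $k$. Here the bijection is the identity, so the fluctuation patterns coincide exactly.

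The only step needing care, and the one I expect to be the main obstacle, is the identification $(I^k)^*=(I^*)^k$ and $(I^k)_W=(I_W)^k$ when the definitions are phrased through minimal generators. I would handle it by observing that both constructions are independent of the chosen monomial generating set: for expansion, this follows from Lemma \ref{Lem.Bayati.Expansion}(i); for weighting, from the homomorphism $h$. With that in hand, the product rules apply directly and the rest is formal.
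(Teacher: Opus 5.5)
Your proposal is correct and follows essentially the same route as the paper: it transfers the fluctuation pattern power by power, using $(I^k)^*=(I^*)^k$ with Proposition \ref{Pro.Bayati.Expansion} for (1), and $(I^k)_W=(I_W)^k$ with Lemma \ref{ASS-Weighted} for (2). Your explicit check that $\mathfrak p\mapsto\mathfrak p^*$ is injective on monomial primes, and that every associated prime of $(I^*)^k$ has the form $\mathfrak p^*$, fills in points the paper uses only tacitly: the former is needed to transfer the ``$\notin$'' conditions, and both are needed for the converse direction.
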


\begin{proof}
 (1)     First, assume that $I$ has the  phenomenon  of  fluctuation in associated primes of powers.  This implies that
    there exist  a monomial prime ideal $\mathfrak{p}$ in $R$, and  positive integers $a<b<c$ satisfying  at least one of the following cases:
\begin{itemize}
\item[(i)] $\mathfrak{p} \in \mathrm{Ass}(R/I^a)$, $\mathfrak{p} \notin \mathrm{Ass}(R/I^b)$, and $\mathfrak{p} \in \mathrm{Ass}(R/I^c)$. 
\item[(ii)] $\mathfrak{p} \notin \mathrm{Ass}(R/I^a)$, $\mathfrak{p} \in \mathrm{Ass}(R/I^b)$, and $\mathfrak{p} \notin \mathrm{Ass}(R/I^c)$.
\end{itemize}
Without loss of generality, assume that case (i) holds, as case (ii) follows similarly. From  Proposition \ref{Pro.Bayati.Expansion}, we can 
deduce that  
    $\mathfrak{p}^* \in \mathrm{Ass}(R^*/(I^a)^*)$, $\mathfrak{p}^* \notin \mathrm{Ass}(R^*/(I^b)^*)$, 
    and $\mathfrak{p}^* \in \mathrm{Ass}(R^*/(I^c)^*)$.   It follows from Lemma \ref{Lem.Bayati.Expansion}(iii) that 
$(I^{a})^*=(I^*)^{a}$, $(I^{b})^*=(I^*)^{b}$, and $(I^{c})^*=(I^*)^{c}$.  This gives that 
$\mathfrak{p}^* \in \mathrm{Ass}(R^*/(I^*)^a)$, $\mathfrak{p}^* \notin \mathrm{Ass}(R^*/(I^*)^b)$, 
    and $\mathfrak{p}^* \in \mathrm{Ass}(R^*/(I^*)^c)$. This means that  $I^*$  has the  phenomenon  of  fluctuation in associated primes of powers. 
The converse follows by a similar argument.

  (2)     The desired claim  follows by adapting the proof of (1) and using Lemmas \ref{LEM. Weighted} and \ref{ASS-Weighted} in a similar manner.
 \end{proof}


 With the preceding results in place, we now formulate and prove the following theorem, which constitutes the first application of 
 Example \ref{Counterexample.1}.

\begin{theorem} \label{Th.Fluctuation}
Let $R=K[x_1, \ldots, x_n]$ with $n\geq 3$ be  a polynomial ring in $n$  variables  with coefficients in a  field $K$. 
Then there exist infinitely many  monomial ideals  in $R$  that  have  the  phenomenon  of  fluctuation in  associated primes of powers.
\end{theorem}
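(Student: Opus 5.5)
The plan is to start from a single fluctuating ideal in three variables and produce infinitely many fluctuating ideals in $R=K[x_1,\ldots,x_n]$ from it, using the operations of Lemma \ref{Lem.Fluctuation}. The seed is the ideal $I_0=(x^5, x^4yz, y^3z^5, xy^2z^4, x^3y^4z^3)\subset K[x,y,z]$ from Example \ref{Counterexample.1}(2), identified with an ideal of $K[x_1,x_2,x_3]$ via $x\mapsto x_1$, $y\mapsto x_2$, $z\mapsto x_3$. By the Macaulay2 computation there, $\mathfrak{m}\in\mathrm{Ass}(I_0)$, $\mathfrak{m}\notin\mathrm{Ass}(I_0^2)$ and $\mathfrak{m}\in\mathrm{Ass}(I_0^3)$, so $I_0$ exhibits fluctuation with $a=1$, $b=2$, $c=3$. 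The ideal in Example \ref{Counterexample.1}(1) would work equally well.

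To pass to $n$ variables, I would use an expansion. If $n=3$, take $I_0$ itself. If $n>3$, take the $3$-tuple $(1,1,n-2)$, so that
\[
R^*=K[x_{11},x_{21},x_{31},\ldots,x_{3,n-2}]\cong K[x_1,\ldots,x_n].
\]
By Lemma \ref{Lem.Fluctuation}(1), the expansion $I_0^*$ fluctuates; the prime $\mathfrak{m}^*$ is the maximal ideal of $R^*$. Under the renaming of variables, this gives one fluctuating ideal $J\subset K[x_1,\ldots,x_n]$.

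To get infinitely many ideals, I would weight $J$. For each integer $k\geq 1$, let $W_k$ be the weight with $W_k(x_1)=k$ and $W_k(x_i)=1$ for $i\geq 2$. By Lemma \ref{Lem.Fluctuation}(2), each $J_{W_k}$ fluctuates. It then remains to check that these ideals are pairwise distinct. Since $x_1^5\in\mathcal{G}(I_0)$ and expansion along a single variable keeps it, $x_1^5\in\mathcal{G}(J)$. Applying $h$ multiplies all $x_1$-exponents by $k$ and leaves the others fixed, so divisibility relations among generators are preserved and the images of $\mathcal{G}(J)$ form $\mathcal{G}(J_{W_k})$. The largest pure power of $x_1$ among the generators of $J_{W_k}$ is then $x_1^{5k}$, and these differ for different $k$. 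Alternatively, one could simply observe that the degree of $J_{W_k}$ grows with $k$.

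The step that needs the most care is the bookkeeping in the transfer. In the weighted case, Lemma \ref{Lem.Fluctuation}(2) silently uses $(I^s)_W=(I_W)^s$, which follows from Lemma \ref{LEM. Weighted}(ii); distinctness uses the fact that $h$ preserves and reflects divisibility of monomials. Both are routine consequences of results already stated, so I do not expect a real obstruction. The one thing to verify explicitly is that the expansion in the case $n>3$ lives in exactly $n$ variables, which holds because $1+1+(n-2)=n$.
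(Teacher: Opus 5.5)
Your proposal is correct and follows essentially the same route as the paper. Both take a fluctuating seed ideal from Example~\ref{Counterexample.1}, expand it to $n$ variables by splitting the variables into three disjoint blocks (your tuple $(1,1,n-2)$ is a special case), and then apply weights, transferring fluctuation via Lemma~\ref{Lem.Fluctuation}(1) and (2). Your explicit check that the ideals $J_{W_k}$ are pairwise distinct, via the unique pure-power generator $x_1^{5k}$, is correct and fills in a point the paper leaves implicit.
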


\begin{proof}
Let $L$ be a monomial ideal in $S=K[x,y,z]$ exhibiting the phenomenon of fluctuation in the associated primes of its powers
 (see Example \ref{Counterexample.1}, which guarantees the existence of such a monomial ideal). 
 Set    $$\mathfrak{p}_1:=(x_{i_1}, \ldots, x_{i_\lambda}), \quad  \mathfrak{p}_2:=(x_{i_{\lambda+1}}, \ldots, x_{i_\theta}) \quad  
  \text{and} \quad  \mathfrak{p}_3:=(x_{i_{\theta+1}}, \ldots, x_{i_\rho}),$$   such that 
   $\bigcup_{i=1}^3\mathrm{supp}(\mathfrak{p}_i)=\{x_1, \ldots, x_n\}$ and   $\mathrm{supp}(\mathfrak{p}_i) \cap    \mathrm{supp}(\mathfrak{p}_j)=\emptyset$  for all $1\leq i < j\leq 3$.  We immediately deduce 
   from  Lemma  \ref{Lem.Fluctuation}(i) that  $L^*$ in $R=K[x_1, \ldots, x_n]$  has the  phenomenon  of  fluctuation in associated primes
   of powers.  Now, put    $I:=(L^*)_W$ with $W(x_i)=\alpha_i$ such that $\alpha_i \geq 1$ for all $i=1, \ldots, n$.  It follows from  Lemma  \ref{Lem.Fluctuation}(ii)  that   $I$  has  the  phenomenon  of  fluctuation in  associated primes of powers. 
   This permits us to conclude that    there exist infinitely many  monomial ideals  in     $R=K[x_1, \ldots, x_n]$    that  have  the  phenomenon  of
  fluctuation in the associated primes of  powers, as claimed.   
\end{proof}


  \subsection{Nearly normally torsion-free monomial ideals and the persistence property} 
  
   For the second application of Example \ref{Counterexample.1}, we first recall some definitions that will be used throughout this subsection.

\begin{definition}(\cite[Definition 2.1]{Claudia}) \label{Def.NNTF}
{\em
Let $I \subset R = K[x_1,\dots,x_n]$ be a monomial ideal. Then $I$ is called \textit{nearly normally torsion-free} if  there exist  a monomial prime ideal $\mathfrak{p}$  and an integer $m \ge 1$ such that  the following statements hold:
\begin{itemize}
\item[(i)]  $\operatorname{Ass}(R/I^t) = \operatorname{Min}(I)$ for all $1 \le t \le m$,
\item[(ii)]   $\operatorname{Ass}(R/I^t) \subseteq \operatorname{Min}(I) \cup \{\mathfrak{p}\}$ for all $t \ge m+1$.
\end{itemize}
}
\end{definition}

Examples  \ref{Ex.Ass.NandP}  and \ref{Ex.Ass.NandP.3}  exhibit classes of monomial ideals satisfying Definition \ref{Def.NNTF}. 
Particularly,  the maximal ideal may occur in certain classes of monomial ideals that are nearly normally
 torsion-free, such as dominating ideals (see \cite[Theorem 4.4]{NBR}  and \cite[Theorem 3.9]{NQBM}) and   $t$-spread
  monomial ideals (refer to \cite[Lemma 5.15]{NQKR}); see also  \cite[Theorem 4.2]{NT.2}  for further information.

  \begin{definition}
    \em{Let $I$ be an ideal in  a commutative Noetherian ring  $R$.
 We  say that  $I$   has the \textit{persistence property} if
 $\mathrm{Ass}(R/I^t) \subseteq \mathrm{Ass}(R/I^{t+1})$ for all $t\geq 1$.
   }
  \end{definition}

The following open question was previously posed in \cite[Question 2.10]{Nasernejad-1}.

\begin{question}
Let $I$ be a square-free monomial ideal in a polynomial ring
$R=K[x_1,\ldots,x_n]$ over a field $K$. Suppose that $I$ is nearly normally
torsion-free. Can we conclude that $I$ has the persistence property or the
strong persistence property?     
\end{question}

  In Proposition \ref{Pro.NNTF}, we provide a negative answer to this question in the monomial case. To achieve this,   we need the following auxiliary lemma, whose proof is straightforward and left to the reader.
  
  \begin{lemma}\label{Pro.NNTF.Expansion.Weighted}
Let $I$ be a monomial ideal in $R=K[x_1,\ldots,x_n]$.   Then
\begin{itemize}
\item[(1)]   $I$ is nearly normally torsion-free if and only if  $I^*$ is nearly normally torsion-free.
\item[(2)]  $I$ is nearly normally torsion-free if and only if   $I_W$ is  nearly normally torsion-free.
\end{itemize}
\end{lemma}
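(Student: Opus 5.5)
We must prove Lemma \ref{Pro.NNTF.Expansion.Weighted}: nearly normally torsion-freeness is preserved and reflected by expansion and by weighting. The plan is to transport each condition of Definition \ref{Def.NNTF} through the two operations, as in the proof of Lemma \ref{Lem.Fluctuation}. The first ingredient is that both operations commute with powers: by Lemma \ref{Lem.Bayati.Expansion}(iii) we have $(I^t)^*=(I^*)^t$, and by Lemma \ref{LEM. Weighted}(ii) we have $(I^t)_W=(I_W)^t$ for all $t\geq 1$. Combining this with Proposition \ref{Pro.Bayati.Expansion} gives
\[
\mathrm{Ass}(R^*/(I^*)^t)=\{\mathfrak{q}^*:\mathfrak{q}\in \mathrm{Ass}(R/I^t)\}.
\]
Combining it with Lemma \ref{ASS-Weighted} gives $\mathrm{Ass}(R/(I_W)^t)=\mathrm{Ass}(R/I^t)$.

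Next we handle the minimal primes. For the weighted ideal, $\mathrm{Min}(I_W)=\mathrm{Min}(I)$, because the minimal elements of equal sets $\mathrm{Ass}(R/I_W)=\mathrm{Ass}(R/I)$ coincide. For the expansion, the map $\mathfrak{q}\mapsto\mathfrak{q}^*$ on monomial primes is injective and preserves and reflects inclusion, since $\mathfrak{q}^*$ is generated by the variables $x_{ij}$ with $x_i\in\mathfrak{q}$. Hence $\mathrm{Min}(I^*)=\{\mathfrak{q}^*:\mathfrak{q}\in\mathrm{Min}(I)\}$, because the minimal primes are the minimal elements of $\mathrm{Ass}$.

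For part (1), assume $I$ is nearly normally torsion-free with data $\mathfrak{p},m$. Then condition (i) becomes $\mathrm{Ass}(R^*/(I^*)^t)=\mathrm{Min}(I^*)$ for $t\leq m$. Condition (ii) becomes $\mathrm{Ass}(R^*/(I^*)^t)\subseteq \mathrm{Min}(I^*)\cup\{\mathfrak{p}^*\}$ for $t\geq m+1$. So $I^*$ is nearly normally torsion-free with data $\mathfrak{p}^*,m$.

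For the converse, suppose $I^*$ satisfies the definition with a monomial prime $P$ of $R^*$ and an integer $m$. This is the only point needing care, since $P$ need not be of the form $\mathfrak{p}^*$. We split into two cases.
\begin{itemize}
\item If some prime of the form $\mathfrak{q}^*$ with $\mathfrak{q}\in\mathrm{Ass}(R/I^t)\setminus\mathrm{Min}(I)$ occurs for some $t\geq m+1$, then it must equal $P$. So $P=\mathfrak{p}^*$ for a monomial prime $\mathfrak{p}$ of $R$, and injectivity of ${}^*$ pulls both conditions back to $I$.
\item Otherwise $\mathrm{Ass}(R/I^t)=\mathrm{Min}(I)$ for all $t$. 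Then $I$ satisfies the definition with any $\mathfrak{p}$, for instance $\mathfrak{p}=\mathfrak{m}$, and the same $m$.
\end{itemize}

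Part (2) is identical and simpler, since the sets of associated primes of all powers coincide outright. The same prime $\mathfrak{p}$ and integer $m$ therefore work in both directions.
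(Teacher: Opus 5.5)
Your proof is correct and follows the route the paper intends. The paper leaves this lemma to the reader, but the intended argument is the one used for Lemma \ref{Lem.Fluctuation}: commute powers with ${}^*$ and with $(\cdot)_W$, then apply Proposition \ref{Pro.Bayati.Expansion} and Lemma \ref{ASS-Weighted}. Your case split in the converse of (1), for a prime $P\subset R^*$ not of the form $\mathfrak{p}^*$, is the one point that analogy does not cover automatically, and you handle it correctly.
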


 \begin{remark}\label{Rem.NNTF}
 {\em 
 Let $I\subset R=K[x_1, \ldots, x_n]$ be a monomial ideal. According to \cite[Lemma 2.9]{N3}, $I$ has the persistence property if and only if $I^*$ 
 has the persistence property. In addition, based on    Lemmas \ref{LEM. Weighted} and \ref{ASS-Weighted}, it is easy to show that $I$ has the persistence property if and only if $I_W$   has the persistence property. 
 }
 \end{remark}

 \begin{proposition}\label{Pro.NNTF}
 Let $R=K[x_1,\ldots,x_n]$, with $n\geq 3$, be a polynomial ring in
$n$ variables over a field $K$. Then there exist infinitely many monomial
ideals in $R$ that are nearly normally torsion-free but do not satisfy the
persistence property.
 \end{proposition}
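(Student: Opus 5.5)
The plan is to imitate the proof of Theorem \ref{Th.Fluctuation}: start from a single explicit ideal in three variables that is nearly normally torsion-free but fails persistence, then use expansion and weighting to produce infinitely many such ideals in $K[x_1,\ldots,x_n]$ for any $n\geq 3$.

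For the seed ideal I take the ideal of Example \ref{Counterexample.1}(1),
$$L=(x^8,\,x^7y,\,x^6y^2z^5,\,xy^7z^{10},\,y^8z^{12})\subset S=K[x,y,z].$$
The computations recorded there give:
\begin{itemize}
\item[(i)] $\mathrm{Ass}(S/L^t)=\mathrm{Min}(L)$ for $t=1,2$;
\item[(ii)] $\mathrm{Ass}(S/L^t)=\mathrm{Min}(L)\cup\{\mathfrak m\}$ for $t=3,4,5$;
\item[(iii)] $\mathrm{Ass}(S/L^6)=\mathrm{Min}(L)$.
\end{itemize}

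\textbf{Nearly normally torsion-free.} Lemma \ref{Lem.Ass} gives $\mathrm{Ass}(S/L^t)\subseteq \mathrm{Ass}(S/L)\cup\{\mathfrak m\}=\mathrm{Min}(L)\cup\{\mathfrak m\}$ for every $t\geq 1$. Hence Definition \ref{Def.NNTF} holds with $m=2$ and $\mathfrak p=\mathfrak m$. Only the cases $t=1,2$ need Macaulay2; for $t\geq 3$ the required inclusion comes from Lemma \ref{Lem.Ass}, not from computation.

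\textbf{Failure of persistence.} We have $\mathfrak m\in\mathrm{Ass}(S/L^5)$ but $\mathfrak m\notin\mathrm{Ass}(S/L^6)$. So $\mathrm{Ass}(S/L^5)\not\subseteq\mathrm{Ass}(S/L^6)$, and $L$ does not have the persistence property.

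\textbf{Passing to $n$ variables.} Fix $n\geq 3$ and split $\{x_1,\ldots,x_n\}$ into three nonempty disjoint blocks $\mathfrak p_1,\mathfrak p_2,\mathfrak p_3$, as in the proof of Theorem \ref{Th.Fluctuation}. Form the expansion $L^*\subset R$. By Lemma \ref{Pro.NNTF.Expansion.Weighted}(1), $L^*$ is nearly normally torsion-free. By Remark \ref{Rem.NNTF}, via \cite[Lemma 2.9]{N3}, $L^*$ fails persistence.

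Next, for any weight $W$ with $W(x_i)=\alpha_i\geq 1$, set $I_W:=(L^*)_W$. Lemma \ref{Pro.NNTF.Expansion.Weighted}(2) and Remark \ref{Rem.NNTF} show that $I_W$ is again nearly normally torsion-free and fails persistence.

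To get infinitely many distinct ideals, choose for instance $\alpha_1=k$ and $\alpha_i=1$ for $i\geq 2$, with $k\geq 1$. Take $x_1$ to lie in the block corresponding to $x$. Then $I_W$ contains a minimal generator of the form $x_1^{8k}\cdot(\text{monomial in the other variables})$. Different values of $k$ therefore give different minimal generating sets, hence different ideals.

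\textbf{Main obstacle.} The only genuinely delicate point is Lemma \ref{Pro.NNTF.Expansion.Weighted}, which the paper leaves to the reader. I would prove it from the following facts:
\begin{itemize}
\item Proposition \ref{Pro.Bayati.Expansion}, together with $(L^t)^*=(L^*)^t$ from Lemma \ref{Lem.Bayati.Expansion}(iii), gives $\mathrm{Ass}((L^*)^t)=\{\mathfrak q^*:\mathfrak q\in\mathrm{Ass}(L^t)\}$. The map $\mathfrak q\mapsto\mathfrak q^*$ is injective and preserves inclusion, so it sends minimal primes to minimal primes and turns $\mathrm{Min}(L)\cup\{\mathfrak p\}$ into $\mathrm{Min}(L^*)\cup\{\mathfrak p^*\}$.
\item For the weighted ideal, Lemmas \ref{LEM. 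Weighted}(ii) and \ref{ASS-Weighted} show that the sets $\mathrm{Ass}$ are literally unchanged.
\end{itemize}
With these in hand, both conditions of Definition \ref{Def.NNTF}, and the persistence property, transfer verbatim in each direction.
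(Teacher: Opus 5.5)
Your proposal is correct and is essentially the paper's proof: take the ideal of Example \ref{Counterexample.1}(1), expand it to $n$ variables, then weight it, transferring near normal torsion-freeness by Lemma \ref{Pro.NNTF.Expansion.Weighted} and failure of persistence by Remark \ref{Rem.NNTF}. Your extra details make explicit what the paper calls routine: near normal torsion-freeness with $m=2$ and $\mathfrak p=\mathfrak m$ via Lemma \ref{Lem.Ass}, the loss of $\mathfrak m$ from $t=5$ to $t=6$, and $\alpha_1=k$ to force distinct ideals; in your sketch of the lemma, what sends minimal primes to minimal primes is that $\mathfrak q\mapsto\mathfrak q^*$ also \emph{reflects} inclusion, which holds because the blocks are disjoint and nonempty.
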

 
 \begin{proof}
  Let $I\subset S=K[x,y,z]$ be the monomial ideal defined in Example \ref{Counterexample.1}(1). It is routine to check that $I$ is nearly normally torsion-free. 
 Set    $$\mathfrak{p}_1:=(x_{i_1}, \ldots, x_{i_\lambda}), \quad  \mathfrak{p}_2:=(x_{i_{\lambda+1}}, \ldots, x_{i_\theta}) \quad  
  \text{and} \quad  \mathfrak{p}_3:=(x_{i_{\theta+1}}, \ldots, x_{i_\rho}),$$   such that 
   $\bigcup_{i=1}^3\mathrm{supp}(\mathfrak{p}_i)=\{x_1, \ldots, x_n\}$ and   $\mathrm{supp}(\mathfrak{p}_i) \cap    \mathrm{supp}(\mathfrak{p}_j)=\emptyset$  for all $1\leq i < j\leq 3$.  It follows from Lemma  \ref{Pro.NNTF.Expansion.Weighted}(i) that  $I^*$ in $R=K[x_1, \ldots, x_n]$ is nearly normally torsion-free. Let   $J:=(I^*)_W$ with $W(x_i)=\alpha_i$ such that $\alpha_i \geq 1$ for all $i=1, \ldots, n$.  
   One can derive from Lemma  \ref{Pro.NNTF.Expansion.Weighted}(ii) that   $J$ is nearly normally torsion-free.  
   Since $I$ does not satisfy the persistence property, Remark \ref{Rem.NNTF} yields that $I^*$ does not satisfy the persistence property. 
  Once again, Remark \ref{Rem.NNTF}  shows that  $J=(I^*)_W$ does not satisfy the persistence property as well.   This implies that   there exist infinitely many monomial
ideals in $R$ that are nearly normally torsion-free but do not satisfy the persistence property.
 \end{proof}

 
 \subsection{Co-nearly normally torsion-free monomial ideals and the  copersistence property} 
 
  We turn to the third application of Example \ref{Counterexample.1} by first recalling a few definitions that will be necessary  in the sequel.

   \begin{definition}\label{Def.co-NNTF}
\em{
Let $I \subset R = K[x_1,\dots,x_n]$ be a  monomial ideal. We say that  $I$ is \textit{co-nearly normally torsion-free}
 if there exist  a monomial prime ideal $\mathfrak{p}$ 
and an integer $m \geq 1$ such that  the following statements hold:
\begin{enumerate}
\item[(i)]  $\operatorname{Ass}(R/I^t) = \operatorname{Min}(I) \cup \{\mathfrak{p}\}$ for all $1 \leq t \leq m$,
\item[(ii)]   $\operatorname{Ass}(R/I^t) \subseteq \operatorname{Min}(I) \cup \{\mathfrak{p}\}$ for all $t \geq m+1$.
\end{enumerate}
}
\end{definition}

Examples  \ref{Ex.Ass.PandN},  \ref{Ex.Ass.PandN.2}, and \ref{Ex.Ass.PandN.3} describe classes of monomial ideals satisfying 
Definition \ref{Def.co-NNTF}.

 \begin{definition} (\cite{NT.3})
    \em{Let $I$ be an ideal in  a commutative Noetherian ring  $R$.
      We say   $I$  has the \textit{copersistence property} if 
$\mathrm{Ass}_R(R/I^t) \supseteq  \mathrm{Ass}_R(R/I^{t+1})$ for all $t\geq 1$.   
   }
  \end{definition}
  
  In view of the above definition, it is natural to ask the following question.
  
  \begin{question}
  Does every co-nearly normally torsion-free monomial ideal satisfy the copersistence property?
    \end{question}

  Proposition \ref{Pro.co-NNTF} shows that, in general, the answer to this question is negative. To establish Proposition  \ref{Pro.co-NNTF}, 
  we have to  use the following auxiliary lemma, whose proof is straightforward and left to the reader.
  
   \begin{lemma}\label{Pro.co-NNTF.Expansion.Weighted}
Let $I$ be a monomial ideal in $R=K[x_1,\ldots,x_n]$.   Then
\begin{itemize}
\item[(1)]   $I$ is co-nearly normally torsion-free if and only if  $I^*$ is co-nearly normally torsion-free.
\item[(2)]  $I$ is co-nearly normally torsion-free if and only if   $I_W$ is  co-nearly normally torsion-free.
\end{itemize}
\end{lemma}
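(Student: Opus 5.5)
The plan is to transfer the two defining conditions of Definition~\ref{Def.co-NNTF} through the operations $I\mapsto I^*$ and $I\mapsto I_W$, following the pattern of the proof of Lemma~\ref{Lem.Fluctuation}. Everything rests on two inputs. The first is that powers commute with both operations: $(I^t)^*=(I^*)^t$ by Lemma~\ref{Lem.Bayati.Expansion}(iii), and $(I^t)_W=(I_W)^t$ by Lemma~\ref{LEM. Weighted}(ii). The second is a description of associated primes under each operation. For expansions, Proposition~\ref{Pro.Bayati.Expansion} gives $\mathrm{Ass}(R^*/(I^*)^t)=\{\mathfrak q^*:\mathfrak q\in \mathrm{Ass}(R/I^t)\}$. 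For weightings, Lemma~\ref{ASS-Weighted} gives $\mathrm{Ass}(R/(I_W)^t)=\mathrm{Ass}(R/I^t)$.

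\textbf{Minimal primes.} I first record that minimal primes behave the same way: $\mathrm{Min}(I^*)=\{\mathfrak q^*:\mathfrak q\in\mathrm{Min}(I)\}$ and $\mathrm{Min}(I_W)=\mathrm{Min}(I)$. For weightings this is immediate, since $\mathrm{Min}$ consists of the minimal elements of $\mathrm{Ass}$ and the two $\mathrm{Ass}$ sets coincide. For expansions, I note that $\mathfrak q\mapsto \mathfrak q^*$ is injective on monomial primes and preserves and reflects inclusion. This is clear because $\mathfrak q^*$ is generated by the blocks of variables $x_{j1},\dots,x_{ji_j}$ for $x_j\in\mathfrak q$. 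Hence minimal elements of $\mathrm{Ass}(R^*/I^*)$ correspond to minimal elements of $\mathrm{Ass}(R/I)$.

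\textbf{Part (1).} Suppose $I$ is co-nearly normally torsion-free with prime $\mathfrak p$ and integer $m$. For $1\le t\le m$, applying $(\cdot)^*$ to $\mathrm{Ass}(R/I^t)=\mathrm{Min}(I)\cup\{\mathfrak p\}$ gives $\mathrm{Ass}(R^*/(I^*)^t)=\mathrm{Min}(I^*)\cup\{\mathfrak p^*\}$. For $t\ge m+1$, the containment transfers the same way to $\mathrm{Ass}(R^*/(I^*)^t)\subseteq \mathrm{Min}(I^*)\cup\{\mathfrak p^*\}$. Thus $I^*$ is co-nearly normally torsion-free with prime $\mathfrak p^*$ and the same $m$.

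For the converse, let $\mathfrak P$ be the prime witnessing the property for $I^*$. The key observation is that $\mathfrak P$ must have the form $\mathfrak p^*$. Indeed, by condition (i) with $t=1$, $\mathfrak P$ lies in $\mathrm{Ass}(R^*/I^*)$, and every element there is some $\mathfrak p^*$. Injectivity of $\mathfrak q\mapsto\mathfrak q^*$ then lets me pull back both the equality and the containment to $R$.

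\textbf{Part (2).} This is identical but simpler, since $\mathrm{Ass}$ and $\mathrm{Min}$ are literally unchanged, and the same $\mathfrak p$ and $m$ work in both directions.

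\textbf{Main obstacle.} The only delicate point is the converse in part (1): recognising that the witnessing prime for $I^*$ already lies in the image of $\mathfrak q\mapsto \mathfrak q^*$, and that this map is injective and inclusion-preserving, so that $\mathrm{Min}$ and the set-theoretic conditions pull back correctly. I expect this to follow from the block structure of the variables of $R^*$.
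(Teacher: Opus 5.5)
Your proof is correct and is the straightforward argument the paper has in mind; the paper leaves this lemma to the reader, and the intended proof mirrors the proof of Lemma~\ref{Lem.Fluctuation}: transport both conditions of Definition~\ref{Def.co-NNTF} via $(I^t)^*=(I^*)^t$, $(I^t)_W=(I_W)^t$ and the corresponding descriptions of associated primes. You also supply the details the paper omits, namely $\mathrm{Min}(I^*)=\{\mathfrak q^*:\mathfrak q\in\mathrm{Min}(I)\}$, and the fact that the witnessing prime for $I^*$ lies in $\mathrm{Ass}(R^*/I^*)$ and so has the form $\mathfrak p^*$ with $\mathfrak q\mapsto\mathfrak q^*$ injective.
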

  
  \begin{remark}\label{Rem.CONNTF}
  {\em
 Let $I\subset R=K[x_1, \ldots, x_n]$ be a monomial ideal. It follows from \cite[Theorem 3.10]{NT.3} that  $I$ has the copersistence property if and only if $I^*$  has the copersistence property. Moreover,  one can deduce from \cite[Theorem 3.14]{NT.3} that  $I$ has the copersistence property if and only if $I_W$   has the copersistence property. 
 }
 \end{remark}

   \begin{proposition} \label{Pro.co-NNTF}
 Let $R=K[x_1,\ldots,x_n]$, with $n\geq 3$, be a polynomial ring in
$n$ variables over a field $K$. Then there exist infinitely many monomial
ideals in $R$ that are co-nearly normally torsion-free but do not satisfy the
copersistence property.
 \end{proposition}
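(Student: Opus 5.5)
The plan mirrors the proof of Proposition \ref{Pro.NNTF}, with Example \ref{Counterexample.1}(2) as the base ideal. Let $I=(x^5, x^4yz, y^3z^5, xy^2z^4, x^3y^4z^3)\subset S=K[x,y,z]$. The \textit{Macaulay2} computation recorded there gives $\mathrm{Ass}(S/I)=\mathrm{Min}(I)\cup\{\mathfrak m\}$, $\mathrm{Ass}(S/I^2)=\mathrm{Min}(I)$ and $\mathrm{Ass}(S/I^3)=\mathrm{Min}(I)\cup\{\mathfrak m\}$, where $\mathfrak m=(x,y,z)$ and $\mathrm{Min}(I)=\{(x,y),(x,z)\}$.

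First, $I$ is co-nearly normally torsion-free with $\mathfrak p=\mathfrak m$ and $m=1$. Condition (i) of Definition \ref{Def.co-NNTF} is exactly $\mathrm{Ass}(S/I)=\mathrm{Min}(I)\cup\{\mathfrak m\}$. For condition (ii), Lemma \ref{Lem.Ass} gives $\mathrm{Ass}(S/I^t)\subseteq \mathrm{Ass}(S/I)\cup\{\mathfrak m\}=\mathrm{Min}(I)\cup\{\mathfrak m\}$ for all $t\geq 1$. This is where the three-variable structure is used: only $\mathfrak m$ can come or go among the associated primes of powers, so no computation beyond $t=1$ is needed for this step. Second, $I$ fails the copersistence property, since $\mathfrak m\in\mathrm{Ass}(S/I^3)$ but $\mathfrak m\notin\mathrm{Ass}(S/I^2)$, so $\mathrm{Ass}(S/I^2)\not\supseteq\mathrm{Ass}(S/I^3)$.

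Next we pass to $n\geq 3$ variables. Partition $\{x_1,\ldots,x_n\}$ into three nonempty disjoint blocks $\mathfrak p_1,\mathfrak p_2,\mathfrak p_3$, exactly as in the proof of Theorem \ref{Th.Fluctuation}, and form the expansion $I^*\subset R$.
\begin{itemize}
\item By Lemma \ref{Pro.co-NNTF.Expansion.Weighted}(1), $I^*$ is co-nearly normally torsion-free.
\item By Remark \ref{Rem.CONNTF}, $I^*$ fails copersistence.
\end{itemize}
Then take any weight $W$ with $W(x_i)=\alpha_i\geq 1$ and set $J=(I^*)_W$.
\begin{itemize}
\item By Lemma \ref{Pro.co-NNTF.Expansion.Weighted}(2), $J$ is co-nearly normally torsion-free.
\item By Remark \ref{Rem.CONNTF} again, $J$ fails copersistence.
\end{itemize}

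Finally, distinct weight vectors $(\alpha_1,\ldots,\alpha_n)$ give distinct ideals. For example, scaling all $\alpha_i$ by $k$ multiplies the degrees of the minimal generators by $k$. Hence we obtain infinitely many such ideals.

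The main obstacle is not in the transfer steps, which are cited lemmas. It lies in the input data for $I$: that $\mathfrak m\notin\mathrm{Ass}(S/I^2)$ and $\mathfrak m\in\mathrm{Ass}(S/I^3)$. If a hand verification is wanted instead of relying on \textit{Macaulay2}, the plan is as follows. For $I^3$, exhibit an explicit corner element $h$, in the style of Examples \ref{Ex.Ass.NandP} and \ref{Ex.Ass.PandN.2}: check $xh,yh,zh\in I^3$ and rule out $h\in I^3$ by solving the exponent inequalities. For $I^2$, list $\mathcal G(I^2)$, write $I^2=(x^{10})+\sum_j x^{i_j}I_j+I_0$ in the form of Theorem \ref{t2} or Corollary \ref{p1}, and show $\depth(S/I^2)=1$ via Corollary \ref{C4}.
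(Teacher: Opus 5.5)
Your proposal is correct and follows the paper's route: the paper's proof simply repeats the argument of Proposition \ref{Pro.NNTF} using Lemma \ref{Pro.co-NNTF.Expansion.Weighted} and Remark \ref{Rem.CONNTF}, and you make explicit what it leaves implicit, namely that the base ideal must be Example \ref{Counterexample.1}(2) (part (1) is not co-nearly normally torsion-free), that condition (ii) follows from Lemma \ref{Lem.Ass}, and why the weighted ideals are pairwise distinct. One small correction to your optional hand check: $I^2$ contains the pure power $x^{10}$, and its $x$-slices are all principal with a strictly decreasing divisibility chain, so the criterion that applies is Corollary \ref{p1}, not Corollary \ref{C4}.
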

  
  \begin{proof}
  The assertion follows readily  from Lemma \ref{Pro.co-NNTF.Expansion.Weighted} and Remark \ref{Rem.CONNTF}
   by repeating the proof of Proposition \ref{Pro.NNTF}. 
  \end{proof}
  
  
  \bigskip
  We conclude this paper by giving a negative answer to the following question.

 \begin{question}  
Let $I\subset R=K[x_1, \ldots, x_n]$ be a monomial ideal such that 
 $\mathrm{Ass}(R/I)=\{\mathfrak{p} :  2\leq |\mathrm{supp}(\mathfrak{p})| \leq n\}$. Then  
$\mathrm{Ass}(R/I) = \mathrm{Ass}(R/I^t)$ for all  $t\geq 1.$
\end{question}

The answer to this question is negative in general. To see such a counterexample, consider the following monomial ideal in $R=K[x,y,z]$
$$I =(y^5z,\,xy^4z,\,x^2y^3z^2,\,x^3y^2,\,x^4y,\,x^5z).$$
 Set $\mathfrak{m}:=(x,y,z)$.  Using  \textit{Macaulay2} \cite{GS}, we obtain 
 \[
\begin{aligned}
\operatorname{Ass}(R/I)&=\{(x,y),(x,z), (y,z), (x,y,z)\}=\mathrm{Min}(I)\cup \{\mathfrak{m}\},\\
\operatorname{Ass}(R/I^2)&=\{(x,y),(x,z), (y,z)\}=\mathrm{Min}(I),\\
\operatorname{Ass}(R/I^3)&=\{(x,y),(x,z), (y,z)\}=\mathrm{Min}(I).
\end{aligned}
\]
In particular, the ideal $I$  is co-nearly normally torsion-free. However
 $$\mathrm{Ass}(R/I)=\{\mathfrak{p} :  2\leq |\mathrm{supp}(\mathfrak{p})| \leq 3\}=\{(x,y),(x,z), (y,z), (x,y,z)\},$$ whereas
$\operatorname{Ass}(R/I) \neq \operatorname{Ass}(R/I^2)$.  Thus, the monomial ideal  $I$ provides the required counterexample.


 \section{Conclusion and Outlook}

In this paper, we investigated the presence of the maximal ideal in the set of associated primes of powers of monomial ideals, as well as the phenomenon of fluctuation. By employing depth-theoretic and combinatorial methods, we provided criteria for the occurrence of the maximal ideal and constructed infinite families of monomial ideals exhibiting fluctuation in their associated primes.

Regarding the general status of the fluctuation problem for monomial ideals:
\begin{itemize}
    \item \textbf{General Monomial Ideals:} The problem of fluctuation is completely settled. For every $n \ge 3$, there exist infinitely many general monomial ideals in $K[x_1, \dots, x_n]$ whose sets of associated primes exhibit fluctuation.
    \item \textbf{Square-Free Monomial Ideals:} The situation is nearly completely resolved:
    \begin{enumerate}
        \item For $n \le 5$, no square-free monomial ideal exhibits fluctuation due to the strong persistence property.
        \item For $n \ge 7$, the existence of a single square-free counterexample in seven variables allows us to construct  square-free monomial ideals
        counterexamples in $K[x_1, \dots, x_n]$ for any $n \ge 7$ using the expansion  technique.
        \item Consequently, the \textbf{only remaining open case} for square-free monomial ideals is $n = 6$.
    \end{enumerate}
\end{itemize}

\subsection*{Future Directions}

The primary open question arising from this line of inquiry is to settle the final boundary case:

\begin{quote}
\textbf{Open Question.} Does there exist a square-free monomial ideal in $R = K[x_1, x_2, x_3, x_4, x_5, x_6]$ whose powers exhibit fluctuation in their sets of associated primes?
\end{quote}

Resolving the $n = 6$ case will fully complete the classification of the fluctuation phenomenon for square-free monomial ideals across all polynomial rings.


\vspace{1.2cm}
\noindent\textbf{Acknowledgements}\\
Some  part of this paper was prepared when the second author, Mehrdad Nasernejad,   visited Alpen-Adria-Universit\"at Klagenfurt  in 2024; in particular, he  would like to thank 
Alpen-Adria-Universit\"at Klagenfurt for its hospitality and support during his stay.
 

 \bigskip
\noindent\textbf{Data availability statement}\\
This manuscript has no associated data.

\bigskip
\noindent\textbf{Disclosure statement}\\
The authors declare no financial or non-financial competing interests.  


\end{document}